\numberwithin{equation}{section}
\newtheorem{theorem}{Theorem}[section]
\newtheorem{lemma}[theorem]{Lemma}
\newtheorem{corollary}[theorem]{Corollary}
\theoremstyle{definition}
\newtheorem{definition}[theorem]{Definition}
\newtheorem{example}[theorem]{Example}
\newtheorem{remark}[theorem]{Remark}
\newcommand{\C}{\mathbb{C}}
\newcommand{\R}{\mathbb{R}}
\newcommand{\Z}{\mathbb{Z}}
\newcommand{\N}{\mathbb{N}}
\newcommand{\ph}{\varphi}
\newcommand{\ind}{\mathbb{1}}
\renewcommand*{\P}{\mathbb{P}}
\newcommand{\E}{\mathbb{E}}
\renewcommand{\leq}{\leqslant}
\renewcommand{\geq}{\geqslant}
\mathchardef\mathhyphen="2D
\newcommand{\cm}{\mathscr{C}\mspace{-4mu}\mathscr{M}}
\newcommand{\am}{\mathscr{A}\mspace{-4mu}\mathscr{M}}
\newcommand{\amcm}{\am\mspace{-2mu}\mathhyphen\mspace{-1mu}\cm}
\NewDocumentCommand{\formula}{ssom}{%
 \IfBooleanTF{#1}{%
  \IfBooleanTF{#2}{%
   \IfValueTF{#3}%
    {\begin{align}\label{#3}\begin{gathered}#4\end{gathered}\end{align}}%
    {\begin{gather}#4\end{gather}}%
  }{%
   \IfValueTF{#3}%
    {\begin{align}\label{#3}\begin{aligned}#4\end{aligned}\end{align}}%
    {\begin{gather*}#4\end{gather*}}%
  }%
 }{%
  \IfValueTF{#3}%
   {\begin{align}\label{#3}#4\end{align}}%
   {\begin{align*}#4\end{align*}}%
 }%
}
\begin{document}

\title[First passage locations]{First passage locations for two-dimensional lattice random walks and the bell-shape}
\author{Jacek Wszoła}
\thanks{Work supported by the Polish National Science Centre (NCN) grant no.\@ 2023/49/B/ST1/04303}
\address{\normalfont Jacek Wszoła \\ Department of Pure Mathematics \\ Wrocław University of Science and Technology \\ Wybrzeże Wyspiańskiego 27 \\ 50-370 Wrocław, Poland}
\email{jacek.wszola@pwr.edu.pl}
\date{\today}
\keywords{random walk, first passage location, first passage time, generating function, rational function, bell-shaped sequence, honeycomb lattice}
\subjclass[2020]{
60G50, 
26C15, 
60E10, 
60G40, 
40A05, 
}

\begin{abstract}
Let $(X_n,Y_n)$ be a two-dimensional diagonal random walk on the lattice $\Z^2$, with transition probabilities depending only on the position of $Y_n$. In this paper, we study its first passage locations $X(\tau_a)$, where $\tau_a$ is the first time $Y_n$ hits level $a~\in~\Z$. We prove that the probability mass function of appropriately rescaled $X(\tau_a)$ is a convolution of geometric sequences, two-point sequences and an $\amcm$ (absolutely monotone then completely monotone) sequence. In particular, rescaled first passage locations have bell-shaped distributions. In order to prove our results, we introduce and study two new classes of rational functions with alternating zeros or poles. We also prove analogous theorems for standard random walks on the lattice $\Z^2$ and random walks on the honeycomb lattice.
\end{abstract}

\maketitle

\section{Introduction} \label{sec:Introduction}

\subsection{Random walks and passage times} \label{subsec:Discrete-time random walks}

Consider a discrete-time random walk $X_n$ on $\Z = \{\ldots, -2,-1,0,1,2,\ldots\}$, starting from $x \in \Z$, with jumps of $\pm 1$ occurring with probabilities $p_k$ which depend only on the current state $k=X_n$ of the process. In 1991, Bondesson characterized first passage times of such random walks. In his work \cite{bondesson}, he proved that for $a \in \Z$, the random variable $\tau_a = \min\{n \geq 0: X_n = a\}$ has the same distribution as $x-a+2(X+Y)$, where $X$ and $Y$ are independent, the distribution of $X$ is a mixture of geometric distributions, and $Y$ is a sum of geometrically distributed random variables (for details, see \cite{bondesson}).

In the light of the recent work \cite{kw} of Kwaśnicki and the author, Bondesson's result can be rephrased as follows: for a one-dimensional discrete-time random walk, the variable $\frac12 (\tau_a-x+a)$ has a bell-shaped distribution. A nonnegative sequence $(a(k): k \in \Z)$ is said to be bell-shaped when it converges to zero at $\pm \infty$, and the sequence of its iterated differences $\Delta^n a(k)$ changes sign exactly $n$ times for $n = 1,2,\ldots$ Bell-shaped sequences are a discrete analogue of a more common notion of bell-shaped functions, previously studied by Schoenberg, Hirschman, Simon, Kwaśnicki and others (see \cite{hirschman}, \cite{ks}, \cite{simon}, \cite{kwasnicki}). The notion of a bell-shaped sequence was first introduced in \cite{kw}, where one-sided bell-shaped sequences were identified with convolutions of completely monotone sequences and Pólya frequency sequences. This result was later extended to the class of all (two-sided) bell-shaped sequences in \cite{kw2}. Based on this characterization, we can assert that $X+Y$ in Bondesson's decomposition is indeed a one-sided bell-shaped sequence.

In the present work, we extend Bondesson's result to the two-dimensional case and consider a diagonal random walk $(X_n, Y_n)$ on the lattice $\Z^2$, with starting point $(x,y) \in \Z^2$. We assume that the jumps are $(\pm 1, \pm 1)$ and the corresponding transition probabilities depend only on the current level $Y_n$ of the process. The main objective of this paper is to study the distribution of the first passage location, that is, the first coordinate of the process at the first passage time: $X(\tau_a)$, where $\tau_a = \min\{n \geq 0: Y_n = a\}$ and $a \in \Z$ (see Figure~\ref{fig:fpl}). More precisely, we prove that, after an appropriate rescaling, $X(\tau_a)$ follows a bell-shaped distribution. Our findings are summarized in the following result (for the definition of $\amcm$ sequences, see Section \ref{sec:bell-shape}).

\begin{figure}
    \centering
    \includegraphics[width=0.9\linewidth]{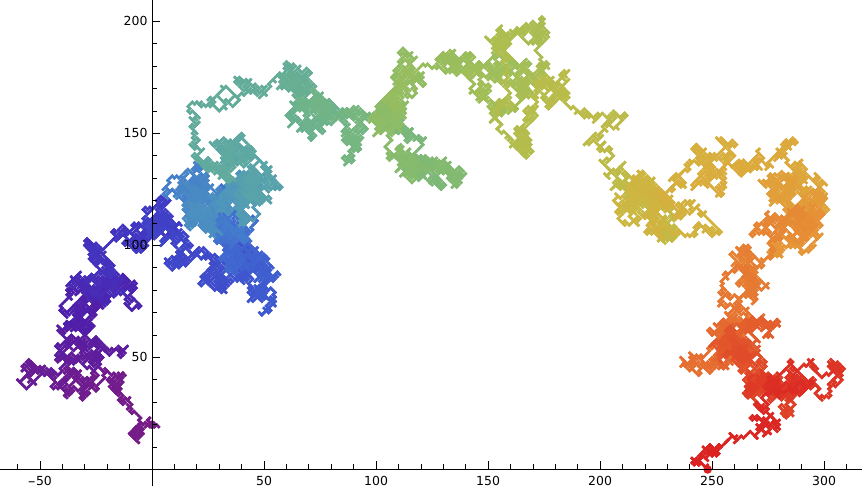}
    \caption{Sample trajectory of a two-dimensional symmetric diagonal random walk $(X_n, Y_n)$ starting from $(0,20)$. Here $X(\tau_0)=248$.}
    \label{fig:fpl}
\end{figure}

\begin{theorem}\label{thm:main}
    Let $(X_n, Y_n)$ be a diagonal random walk that starts in $(x,y) \in \Z^2$, with transition probabilities depending only on $Y_n$. For every $a \in \Z$, $a \neq y$, the first passage location $\frac{1}{2}(X(\tau_a)-|y-a|)-x$ has a bell-shaped probability mass function, given by the convolution of:
    \begin{itemize}
        \item $A$ geometric sequences;
        \item $B$ reflected geometric sequences;
        \item at most $y$ reflected two-point sequences;
        \item an $\amcm$ sequence;
    \end{itemize}
    where $0 \leq A \leq \lfloor \frac{1}{2}(|y-a|-1) \rfloor$ and $0 \leq B \leq \lfloor \frac{1}{2}(|y-a|-1) \rfloor$.
\end{theorem}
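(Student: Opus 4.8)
\medskip
\noindent\textbf{Proof proposal.} The plan is to pass to generating functions and to exploit the fact that the $Y$-coordinate evolves as a one-dimensional nearest-neighbour walk. First I would reduce to the case $a<y$ by reflecting the second coordinate, so that $|y-a|=y-a$; write $p_k^{++},p_k^{+-},p_k^{-+},p_k^{--}$ for the probabilities of the four jumps $(\pm1,\pm1)$ from level $k$. To reach level $a$ the walk must cross, one after another, each edge joining level $k$ to level $k-1$ for $k=y,y-1,\dots,a+1$, so by the strong Markov property (and since the one-step law depends only on the current level) the horizontal increment $X(\tau_a)-x$ is an \emph{independent} sum $\sum_{k=a+1}^{y}D_k$, where $D_k$ is the net horizontal displacement accumulated during the first passage of $(Y_n)$ from level $k$ to level $k-1$. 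Thus, with $\phi_k(\xi)=\E[\xi^{D_k}]$,
\formula{\E\bigl[\xi^{X(\tau_a)-x}\bigr]=\prod_{k=a+1}^{y}\phi_k(\xi),}
and conditioning on the first step from level $k$ gives $\phi_k=b_k+c_k\phi_{k+1}\phi_k$, i.e.\ the continued-fraction recursion
\formula{\phi_k(\xi)=\frac{b_k(\xi)}{1-c_k(\xi)\,\phi_{k+1}(\xi)},}
where $b_k(\xi)=p_k^{+-}\xi+p_k^{--}\xi^{-1}$ and $c_k(\xi)=p_k^{++}\xi+p_k^{-+}\xi^{-1}$ are the ``downward'' and ``upward'' one-step weights, normalised by $b_k(1)+c_k(1)=1$.

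Next I would linearise the recursion. Setting $\phi_k=Q_k/Q_{k-1}$ turns it into the three-term \emph{linear} recurrence $c_k(\xi)\,Q_{k+1}(\xi)=Q_k(\xi)-b_k(\xi)\,Q_{k-1}(\xi)$; choosing $(Q_k)$ to be its minimal solution at $+\infty$ — the one reproducing the genuine generating functions $\phi_k$, as in Pincherle's theorem — and telescoping the product yields
\formula{\E\bigl[\xi^{X(\tau_a)-x}\bigr]=\frac{Q_y(\xi)}{Q_a(\xi)}.}
Each $Q_k$ depends only on the finitely many weights $b_j,c_j$ that occur and on a universal ``tail'' factor encoding the behaviour at $+\infty$; when the transition probabilities stabilise for large $k$ this tail factor is the subdominant root of $c\,t^{2}-t+b=0$, so that $Q_y/Q_a$ becomes an explicit algebraic function of the form $\bigl(A(\xi)+B(\xi)\sqrt{1-4b(\xi)c(\xi)}\bigr)/C(\xi)$ with $A,B,C$ Laurent polynomials, and the general case should follow by approximation. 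After passing to the variable $\zeta=\xi^{2}$ (the $D_k$ are odd) and the shift attached to $\tfrac12(X(\tau_a)-|y-a|)-x$, the probability mass function in the statement is precisely the coefficient sequence of this function.

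I would then aim to recognise that coefficient sequence as the convolution described in the statement, and this is where the two new classes of rational functions with alternating zeros and poles, developed earlier in the paper, come in: one shows that $Q_y/Q_a$ — or a suitable transform of it — lies in such a class, so that its rational part splits, through the interlacing of its zeros and poles, into $A$ geometric factors, $B$ reflected geometric factors, and the reflected two-point factors carried by the numerators $b_k$, whereas its radical part is identified with the generating function of an $\amcm$ sequence. The bounds $0\le A,B\le\lfloor\tfrac12(|y-a|-1)\rfloor$ and the bound on the number of two-point factors should emerge from counting the zeros and poles available to the Laurent polynomials $A,B,C$, as is built into the definition of those classes. Feeding this factorisation into the characterisations of one-sided and two-sided bell-shaped sequences from \cite{kw,kw2} and from Section~\ref{sec:bell-shape} then gives the conclusion.

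The hard part, I expect, will be this last step: showing that $Q_y/Q_a$ genuinely belongs to the relevant new class — that its zeros and poles interlace in the prescribed pattern and that the radical factor really does generate an $\amcm$ sequence. Here the positivity of the transition probabilities must be used in an essential way, through the sign patterns of $b_k,c_k$ and the Herglotz-type properties of the Weyl function of the tail; moreover the general, non-stabilising case forces a limiting argument, which requires the relevant classes of sequences to be closed under the limits in play. By contrast, the reduction to $a<y$, the independence decomposition, and the linearisation in the first two steps are routine applications of the strong Markov property and standard continued-fraction algebra.
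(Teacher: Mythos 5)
Your opening reduction and the independence decomposition $X(\tau_a)-x=\sum_{k=a+1}^y D_k$ are correct, and the continued-fraction recursion $\phi_k=b_k/(1-c_k\phi_{k+1})$ together with the linearisation $\phi_k=Q_k/Q_{k-1}$ is algebraically equivalent to the recurrences \eqref{eq:system-fy0y+1}--\eqref{eq:system-Fy0y+1} that the paper derives. The two new rational-function classes you plan to invoke ($\mathscr{P}$ and $\mathscr{Q}$ in the paper's notation) are indeed the right tools, and your intuition that the ``rational part'' should split into geometric, reflected geometric and two-point factors while the ``radical part'' should be $\amcm$ is the correct picture.

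However, there is a genuine gap at the decisive step, and the barrier that makes the paper's argument go through is precisely what your proposal omits. The paper never works directly with the minimal solution $(Q_k)$ or with its algebraic ``tail factor.'' Instead it keeps an auxiliary \emph{finite} upper barrier $b$ (so $f_y^{0,b}$, not $f_y^{0,\infty}$), which guarantees that every generating function in sight is a \emph{rational} Laurent function. With everything rational, Lemma~\ref{thm:interlacing-w1} lets one run a clean induction showing the linearised solution $G_y$ lies in $\mathscr{P}$ with interlacing zeros and with the degree bound $A_y,B_y\leq\lfloor y/2\rfloor$ (this is exactly how the bounds $A,B\leq\lfloor\tfrac12(|y-a|-1)\rfloor$ arise, not by ``counting zeros and poles of Laurent polynomials'' after a radical has appeared); Lemma~\ref{thm:w2-closure} then shows the denominator correction $H_{y,b}$ lies in $\mathscr{Q}$. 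Only at the very end is $b\to\infty$ taken, and the missing piece is supplied by the fact that the class of generating functions of $\amcm$ sequences is closed under pointwise limits. Your route goes the other way: you solve the infinite-strip problem at once, so $Q_k$ is genuinely algebraic, and you then propose to peel off the radical and show separately that (i) the rational part is interlacing and (ii) the radical part is $\amcm$. Neither (i) nor (ii) is established by the paper's lemmas, which are stated and proved for rational functions only; in particular the sign analysis in the proof of Lemma~\ref{thm:interlacing-w1} relies on a finite zero/pole count and does not port to a function carrying a branch cut, and your reference to ``Herglotz-type properties of the Weyl function of the tail'' is a plausible replacement but is precisely what would have to be proved. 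Moreover, your ``approximate the non-stabilising case'' plan is a different and weaker limiting argument than the one the paper uses: stabilising the transition probabilities does not make the strip finite, so you would still be working with an algebraic $Q_k$.

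In short: your decomposition is correct and your reading of the overall architecture is right, but the proposal as written replaces the paper's finite-barrier device (which confines all the hard interlacing analysis to rational functions and defers the single limit to the last line) with a direct attack on the algebraic minimal solution, and the key step — showing that the resulting algebraic function factors into a Pólya frequency part and an $\amcm$ part — is asserted rather than proved, and would be considerably harder than what the paper actually does.
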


Note that Bondesson's theorem is a special case of the above result. It can be easily obtained by applying Theorem \ref{thm:main} to a random walk, which makes jumps only towards the east (NE and SE directions). For details, see Example \ref{ex:bodensson}.

Even though the results presented in this paper may appear similar to those obtained by Bondesson, there are essential differences in the proofs. In both problems, the proof begins from deriving recurrence equations based on the properties of the random walks. The one-dimensional problem yields relatively simple solutions, which can be represented by Stieltjes functions. The two-dimensional case, however, leads to more complex equations and solutions, and hence similar techniques fail to work here. In order to overcome these difficulties, two new classes of rational functions are introduced and studied.

We also prove a completely analogous result for standard random walks on the lattice $\Z^2$. This is the random walk $(X_n, Y_n)$ that makes jumps towards north $(0,1)$, east $(1,0)$, south $(0,-1)$ and west $(-1,0)$, again under the assumption that transition probabilities depend only on the current value of $Y_n$. Like for the diagonal random walk, first passage locations for the standard random walk also follow a bell-shaped distribution.

\begin{theorem}\label{thm:news}
    Let $(X_n, Y_n)$ be the standard random walk that starts in $(x,y) \in \Z^2$, with transition probabilities depending only on $Y_n$. For every $a \in \Z$, $a \neq y$, the first passage location $X(\tau_a)-x$ has a bell-shaped probability mass function given by the convolution of:
    \begin{itemize}
        \item $A$ geometric sequences;
        \item $B$ reflected geometric sequences;
        \item an $\amcm$ sequence;
    \end{itemize}
    where $0 \leq A \leq \lfloor \frac{1}{2}|y-a| \rfloor$ and $0 \leq B \leq \lfloor \frac{1}{2}|y-a| \rfloor$.
\end{theorem}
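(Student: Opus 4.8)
The plan is to follow the same route as the proof of Theorem~\ref{thm:main}, adjusted to the standard walk (in particular, no rescaling of $X(\tau_a)$ is needed, since east and west steps move the first coordinate without changing the level). Fix $a$ and assume first that $y>a$; the case $y<a$ reduces to it by reflecting the lattice in a horizontal line, which swaps north and south, fixes east and west, and preserves the hypothesis that the transition probabilities depend only on the level. For $k\in\Z$ put $\tau_k=\min\{n\geq 0:Y_n=k\}$, and for $k>a$ set
\formula{u_k(z)=\E_{(0,k)}\bigl[z^{X(\tau_{k-1})}\bigr],}
the generating function of the horizontal displacement accumulated while the walk started at level $k$ first descends to level $k-1$ (it may wander arbitrarily high above level $k$ on the way). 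Because a standard walk changes its level by at most one per step, the strong Markov property together with translation invariance in the first coordinate gives
\formula[eq:news-product]{\E_{(x,y)}\bigl[z^{X(\tau_a)}\bigr]=z^{x}\prod_{k=a+1}^{y}u_k(z),}
so that $X(\tau_a)-x$ has probability generating function $\prod_{k=a+1}^{y}u_k(z)$, a product of $|y-a|$ factors. Conditioning on the first step out of level $k$ — east and west contribute $z^{\pm1}u_k(z)$, a north step forces an additional descent through level $k$ and contributes $u_{k+1}(z)u_k(z)$, and a south step completes the descent — yields the continued-fraction recursion
\formula[eq:news-cf]{u_k(z)=\frac{p_k^{S}}{1-p_k^{E}z-p_k^{W}z^{-1}-p_k^{N}u_{k+1}(z)},}
the exact counterpart of the recursion driving the proof of Theorem~\ref{thm:main}, save that here the numerator is a constant and the east/west jumps enter as a Laurent-polynomial correction in the denominator instead of as a Laurent-polynomial numerator.

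The second step is to extract the analytic structure of each $u_k$, and hence of the product \eqref{eq:news-product}. Killing the walk above a level $M\geq y$ makes $u_M(z)=p_M^{S}/(1-p_M^{E}z-p_M^{W}z^{-1})$, the ratio of a constant to a Laurent polynomial whose zeros are real and positive; substituting this into \eqref{eq:news-cf} and inducting downwards shows that each $u_k$, and the product \eqref{eq:news-product}, is a rational function lying in one of the two classes of rational functions with alternating zeros or poles studied in this paper. The structure theorem for that class then decomposes each such function as a product of geometric sequences, reflected geometric sequences and an $\amcm$ sequence; a careful count of the zeros and poles produced over the $y-a$ iterations of \eqref{eq:news-cf} limits the number of geometric and of reflected geometric factors to at most $\lfloor\tfrac12|y-a|\rfloor$ each, and the fact that the numerator of \eqref{eq:news-cf} is a constant rather than a $z+cz^{-1}$ term is exactly what removes the reflected two-point factors present in Theorem~\ref{thm:main}. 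Letting $M\to\infty$ and invoking the closure of these rational-function classes under the relevant monotone limits transfers the decomposition to the original walk, and a convolution of geometric, reflected geometric and $\amcm$ sequences is bell-shaped by the criterion of Section~\ref{sec:bell-shape}.

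The crux is this second step: showing that the approximants $u_k$ stay inside the designated rational-function class under the substitution $f\mapsto p_k^{S}/(1-p_k^{E}z-p_k^{W}z^{-1}-p_k^{N}f)$, and reading off the sharp bounds $A,B\leq\lfloor\tfrac12|y-a|\rfloor$. This is precisely where the two new classes of rational functions and their stability under such Möbius-type maps are needed; the bookkeeping differs in detail from the diagonal case because the new zeros and poles now enter through the denominator, but the strategy is identical. The remaining issues are routine: justifying \eqref{eq:news-product} and the manipulation of generating functions when $\tau_a<\infty$ has probability strictly less than one (throughout one works with the corresponding sub-probability generating functions, holomorphic on an annulus around $|z|=1$), and the limiting argument in $M$.
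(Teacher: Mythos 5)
Your reduction is clean and valid up to a point: the one-step descent generating functions $u_k$, the product formula \eqref{eq:news-product}, and the Möbius-type recursion \eqref{eq:news-cf} are all correct, and the observation that each $u_k$ (with the level-$M$ barrier) belongs to $\mathscr{Q}$ does follow from Lemma~\ref{thm:w2-closure} applied with $F_1=1$, $F_2=u_{k+1}$, using $p_k^{E}+p_k^{W}+p_k^{N}u_{k+1}(1)\le 1-p_k^{S}<1$. The telescoping $u_k=p_k^{S}D_{k+1}/D_k$, $\prod_{k=a+1}^{y}u_k=C\,D_{y+1}/D_{a+1}$, is also a genuinely more transparent derivation of the three-term recurrence than the paper's double-barrier manipulation.

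However, the crux of the argument is missing. The claim that ``the product \eqref{eq:news-product} is a rational function lying in one of the two classes'' is false for $y-a\ge 2$: the relation $\ll$ is pairwise and not transitive, and the ratio $D_{y+1}/D_{a+1}$ has denominator degree exceeding numerator degree by about $y-a$, so it cannot lie in $\mathscr{Q}$ (which requires degree difference at most one on each side), nor in $\mathscr{P}$. A product of $\mathscr{Q}$ functions is in general neither $\mathscr{Q}$ nor an $\amcm$ generating function, so one cannot simply invoke ``the structure theorem for that class.'' More fundamentally, your approximants $D_{y+1,M}$ and $D_{a+1,M}$ both have degree growing with $M$; there is no fixed-degree factor visible in the telescoped expression from which to read off ``at most $\lfloor|y-a|/2\rfloor$'' geometric and reflected-geometric sequences, and the limit $M\to\infty$ does not preserve the rational-function classes. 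What the paper actually does — and what is not routine — is to isolate a \emph{barrier-independent} $\mathscr{P}$ factor $G_y$ (coming from $f_{y-1}^{0,y}$, i.e.\ with the \emph{lower} barrier placed at $y$, controlled via Lemma~\ref{thm:news:w1} and giving the degree bound $\lfloor y/2\rfloor$) and a \emph{barrier-dependent} $\mathscr{Q}$ factor $H_{y,b}$ whose limit as $b\to\infty$ is an $\amcm$ generating function; see Lemma~\ref{thm:Fy0b-solution} and Step~4 of Section~\ref{sec:news}. Your proposal never performs this split, so it cannot reach the asserted convolution structure with the stated bounds; your observation that the absence of a Laurent-polynomial numerator in \eqref{eq:news-cf} eliminates the two-point factors is correct but presupposes exactly the decomposition that has not been established.
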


Although the proof of Theorem~\ref{thm:news} uses similar techniques as the proof of Theorem~\ref{thm:main}, some technical modifications are necessary here; see Section \ref{sec:news}.

Finally, we consider first passage times for a random walk on the honeycomb lattice. This is a two-dimensional random walk that moves only on the vertices of adjacent regular hexagons (for details, see Section \ref{sec:honeycomb}). The problem of first passage times and other properties of random walks on the honeycomb lattice were studied in \cite{dcm}. This type of random walk finds its application in many scientific and engineering problems (see Introduction in \cite{dcm}). It also appeared in the theoretical study of graphene and graphite in \cite{bm}.

Once again we assume that the probabilities of jumps depend only on the position of $Y_n$. As a result of Theorem~\ref{thm:main}, we obtain the following corollary.

\begin{corollary}\label{thm:honey}
    The first passage locations for random walks on the honeycomb lattice satisfy the assertion of Theorem \ref{thm:main}.
\end{corollary}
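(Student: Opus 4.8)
The plan is to reduce a random walk on the honeycomb lattice to a diagonal random walk on $\Z^2$ and then invoke Theorem~\ref{thm:main}. The honeycomb lattice is bipartite, and its vertices can be split into two sublattices; the natural idea is to look at the walk only at even times (or only at the vertices of one sublattice), which eliminates the local geometric irregularity of the hexagonal structure. First I would set up an explicit coordinate system on the honeycomb lattice, say embedding the vertices so that one sublattice maps to points $(x,y)$ with $x+y$ even and the other to points with $x+y$ odd; a single step of the honeycomb walk moves between the two sublattices, and two consecutive steps land back on the original sublattice having moved by one of the vectors $(\pm 1, \pm 1)$ or $(\pm 2, 0)$ or $(0,0)$, depending on the pair of directions chosen.

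The key step is to verify that the two-step walk, restricted to one sublattice, is exactly (after possibly an affine change of coordinates that turns the horizontal $\pm 2$ and $0$ moves into genuine diagonal $\pm 1$ moves) a diagonal random walk whose transition probabilities depend only on the current value of the second coordinate. This requires that the honeycomb transition probabilities themselves depend only on the $Y$-level, which is exactly the standing assumption in the statement; one must then check that the induced two-step transition probabilities still depend only on the level, and that the possible $(0,0)$ ``lazy'' step (two steps that return to the start) is harmless --- either by absorbing it into the framework, or by conditioning it out, since a lazy step does not change $X(\tau_a)$ or affect when $Y_n$ first hits level $a$. I would also need to match up the level hit by the honeycomb walk with the level $a$ hit by the diagonal walk: since $\tau_a$ for the honeycomb walk and for its two-step version coincide up to the obvious parity considerations (the level $a$ is reached at an even time once we track things on the right sublattice, or we simply note that the value of $X$ at the first hitting time is unchanged by refining to two-step increments), the first passage location is literally the same random variable.

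Once this reduction is in place, Theorem~\ref{thm:main} applies verbatim to the induced diagonal walk, and its conclusion --- that the suitably rescaled first passage location has a bell-shaped probability mass function of the stated convolution form --- transfers back to the honeycomb walk, giving the corollary. The main obstacle I anticipate is purely bookkeeping: carefully handling the affine change of coordinates so that the two-step increments become admissible diagonal jumps $(\pm 1, \pm 1)$, dealing cleanly with the degenerate ``stay in place'' two-step move (which contributes a point mass at $0$ and hence only a trivial geometric-type factor, or can be removed by a standard resampling argument), and confirming that after the change of coordinates the level-dependence hypothesis of Theorem~\ref{thm:main} is preserved. None of these steps is deep, but they must be done carefully enough that the hypotheses of Theorem~\ref{thm:main} are genuinely met; I would present the construction explicitly and then simply cite Theorem~\ref{thm:main}.
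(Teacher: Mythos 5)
Your two-step reduction has a genuine gap, and it is not merely bookkeeping. After two steps, a honeycomb walk restricted to one sublattice has six distinct nonzero net displacements (the sublattice is triangular, so the origin has six neighbours reachable in exactly two steps) together with a lazy return to the start. You list these as $(\pm 1,\pm 1)$, $(\pm 2, 0)$ and $(0,0)$; no affine change of coordinates can map the six nonzero vectors into the four-element set $\{(\pm 1,\pm 1)\}$. Concretely, since $(2,0)=(1,1)+(1,-1)$, any linear map $T$ must satisfy $T(2,0)=T(1,1)+T(1,-1)$, and the sum of two distinct vectors from $\{(\pm 1,\pm 1)\}$ always has both coordinates even, so $T(2,0)$ cannot land in $\{(\pm 1,\pm 1)\}$. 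Thus the induced two-step process is not a diagonal random walk in the sense of Theorem~\ref{thm:main}, and you cannot ``absorb'' or ``condition out'' the extra structure without changing the class of walks. The lazy step is a separate incompatibility, since the diagonal-walk framework requires $p_{y,1}+p_{y,2}+p_{y,3}+p_{y,4}=1$ with no mass at $(0,0)$.

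The paper avoids all of this with a one-step embedding. It maps each honeycomb vertex $(x,y)$ to $\ph(x,y)=(x-\lfloor y/2\rfloor, y)$, under which each single honeycomb step becomes a single diagonal step $(\pm 1,\pm 1)$, and the fact that a honeycomb vertex has only three neighbours simply shows up as one forbidden direction per level: $p_{y,1}=0$ for odd $y$ and $p_{y,3}=0$ for even $y$. The transition probabilities still depend only on the level, and the first passage location is unchanged up to a deterministic shift, so Theorem~\ref{thm:main} applies directly. If you want to salvage your idea, you should abandon the two-step reduction and instead look for a coordinate change sending each honeycomb edge to a single diagonal edge; the point is that the honeycomb lattice (unlike the triangular lattice you get from two-step increments) embeds into $\Z^2$ with the diagonal adjacency structure once you allow level-dependent forbidden directions.
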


The results of this work can also find their continuous counterpart. Indeed, in \cite{kwasnicki3} it is proved that Poisson kernels on the half-plane are weakly bell-shaped. 

All random walks discussed in this work are special cases of a broader class of Markov Additive Processes (MAPs). Exit problems of fully discrete MAPs, sometimes called Markov Additive Chains (MACs), were studied in \cite{prp}. In terms of this theory, $X_n$ describes the level of the underlying process while $Y_n$ describes the phase of some external Markov chain; see also \cite{as, ballu, macci}. MACs are essentially the same as hidden Markov models in statistics \cite{elliott, mz}. Both MAPs and MACs are widely used in insurance and risk modeling; see \cite{asmussen1, asmussen2, prp}.

\subsection{Organization} The paper is organized as follows. In Section \ref{sec:generating-functions}, we introduce the assumptions and notation. We also define probability generating functions of the first passage locations and develop recurrence equations connecting them. Section \ref{sec:bell-shape} is devoted to bell-shaped sequences and related classes of sequences. We provide a summary of the history of bell-shaped sequences (and functions) and gather some characterization theorems. The main object of study in Section \ref{sec:rational-functions} are rational functions. We introduce and study two new classes of rational functions: $\mathscr{P}$ and $\mathscr{Q}$. This section is the most technical one: there we prove several useful results regarding $\mathscr{P}$ and $\mathscr{Q}$ functions, including their connections with structures developed in Sections \ref{sec:generating-functions} and  \ref{sec:bell-shape}. The proof of Theorem \ref{thm:main} is provided in Section \ref{sec:proofs}. Finally, in Section \ref{sec:discussion}, we collect some examples and discuss other variants of random walks, which include proving Theorem \ref{thm:news} and Corollary \ref{thm:honey}.

\section{Generating functions of first passage locations}\label{sec:generating-functions}

\subsection{Assumptions and notation} Throughout this paper, our discussion is focused on diagonal random walks. The approach for standard random walks and random walks on the honeycomb lattice is described in detail in Sections \ref{sec:news} and \ref{sec:honeycomb}, respectively.

Let $(X_n, Y_n)$ be a two-dimensional Markov chain on the lattice $\Z^2$. The process only makes diagonal jumps $(\pm 1,\pm 1)$, that is, towards the NE, NW, SE, SW directions. We assume that the transition probabilities depend only on the level $Y_n$ of the process and they are defined as follows:
\formula{
    p_{y,1} = \P(X_{n+1}=X_n+1, Y_{n+1}=y+1 \, | \, X_n = x, Y_n=y), \\
    p_{y,2} = \P(X_{n+1}=X_n-1, Y_{n+1}=y+1 \, | \, X_n = x, Y_n=y), \\
    p_{y,3} = \P(X_{n+1}=X_n-1, Y_{n+1}=y-1 \, | \, X_n = x, Y_n=y), \\
    p_{y,4} = \P(X_{n+1}=X_n+1, Y_{n+1}=y-1 \, | \, X_n = x, Y_n=y),
}
where $\sum_{i=1}^4 p_{y,i} = 1$ for every $y \in \Z$. For simplicity of notation, we denote the jump directions in the same way as the quadrants of the coordinate system. An additional assumption we make is that
\formula{
    & p_{y,1} + p_{y,2} > 0 & \text{and} & & p_{y,3} + p_{y,4} > 0 &
}
for every $y \in \Z$. This simply means that the random walk does not have any natural barriers and it can reach any level $y \in \Z$ with positive probability. This assumption is not essential, as we will discuss later (see Section \ref{sec:barriers}); however, it is convenient to work with it in order to avoid certain technical problems.

The probability measure associated with the random walk starting from $(x,y) \in \Z^2$ will be denoted by $\smash{\P^{(x,y)}}$. An analogous notation will be used for expectations. To avoid nested indices, we will sometimes write $(X(n), Y(n))$ instead of $(X_n, Y_n)$.

The random walk defined above is clearly a time-homogeneous Markov process and it is invariant under translations with respect to the first coordinate $X_n$. The latter property is an easy consequence of the fact that transition probabilities depend only on $Y_n$. This allows us to restrict our attention to processes starting from $(0,y)$.

For $a \in \Z$, we define the first passage time as
\formula{
\tau_a = \min\{n \geq 0: Y(\tau_a) = a\}.
}
Throughout this paper, we will study distributions restricted to the event $\{\tau_a < \infty\}$.

Finally, note that if $|y-a|$ is even, $X(\tau_a)$ is also even. Similarly, if $|y-a|$ is odd, $X(\tau_a)$ is also odd. To ensure that $X(\tau_a)$ can take all integer values, we will consider rescaled $X(\tau_a)$, that is $\frac12 (X(\tau_a) -|y-a|)$. We can assume that $x=a=0$ and $y \in \N = \{0,1,2,\ldots\}$. It follows that the random walk moves only in the lattice $\Z \times \N$. These assumptions will offer major simplifications to the forthcoming proofs.

\subsection{Generating functions}

Our main tool will be the probability generating functions of the first passage locations $X(\tau_a)$. Let us denote
\formula[eq:generating-function]{
f^{a,b}_y(z) = \E^{(0,y)} \left[ z^{X(\tau_a)} \ind_{\tau_a < \tau_b} \right]
}
for $a,b,y \in \Z$. This generating function corresponds to the distributions of the locations of the random walk at the first time of hitting level $a$ from $(0,y)$, given that the process reaches $a$ before hitting $b$. Thus, we additionally assume that $\min\{a,b\} \leq y \leq \max\{a,b\}$. Note that on the event $\{\tau_a < \tau_b\}$, we have $\tau_a < \infty$, and the passage location $X(\tau_a)$ is finite. Therefore, the generating function is well defined on the complex unit circle $\{z \in \C: |z|=1\}$.

Geometric interpretation of formula \eqref{eq:generating-function} described above provides an intuitive tool which can be used to justify the following property of this type of functions.

\begin{lemma}\label{lem:generting-functions-equation}
    If $a < y < b$, the following equation holds:
    \formula*[eq:generting-functions-equation]{ 
        f_y^{a,b}(z) &= (p_{y,3} z^{-1} + p_{y,4} z) f_{y-1}^{a,y}(z) \\ &+\big( (p_{y,1} z + p_{y,2} z^{-1}) \, f_{y+1}^{y, b}(z) + (p_{y,3} z^{-1} + p_{y,4} z) \, f_{y-1}^{y, a}(z) \big) f_y^{a,b}(z).
    }
\end{lemma}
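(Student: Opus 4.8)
The plan is to prove \eqref{eq:generting-functions-equation} by a first-step / strong Markov decomposition of the trajectory, tracking the first coordinate multiplicatively through the generating function. Fix $a < y < b$ and start the walk at $(0,y)$. On the event $\{\tau_a < \tau_b\}$ the walk must first leave level $y$; by the definition of the transition probabilities its first move is one of the four diagonal steps, so immediately after time $1$ the walk sits at $(\pm 1, y\pm1)$. I would split according to whether the first step goes up (to level $y+1$, with horizontal displacement $+1$ or $-1$ carrying the factor $p_{y,1}z + p_{y,2}z^{-1}$) or down (to level $y-1$, carrying $p_{y,3}z^{-1} + p_{y,4}z$). This already explains the coefficients $(p_{y,1}z + p_{y,2}z^{-1})$ and $(p_{y,3}z^{-1}+p_{y,4}z)$ that appear.

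Next I would analyze each branch using the strong Markov property at successive returns to level $y$. After the first up-step, the walk is at level $y+1$; before it can reach $a$ it must come back to level $y$ (it cannot jump past $y$ since steps change the level by exactly $1$), and it must do so before reaching $b$. The probability-generating contribution of this excursion, from $(\pm1, y+1)$ back to level $y$ without hitting $b$, is exactly $f_{y+1}^{y,b}(z)$ by translation invariance in the first coordinate and the definition \eqref{eq:generating-function}. After returning to level $y$ at some horizontal position $X$, the walk restarts afresh, and by the strong Markov property and translation invariance the generating function of $X(\tau_a)$ from that point (on $\{\tau_a<\tau_b\}$) is $z^{X}$ times $f_y^{a,b}(z)$. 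Taking expectations, the up-branch contributes $(p_{y,1}z+p_{y,2}z^{-1}) f_{y+1}^{y,b}(z) f_y^{a,b}(z)$. Symmetrically, after a first down-step to level $y-1$, either the walk goes on to reach $a$ directly without returning to level $y$ — contributing $(p_{y,3}z^{-1}+p_{y,4}z) f_{y-1}^{a,y}(z)$, the first term on the right — or it returns to level $y$ first without having hit $a$, contributing $(p_{y,3}z^{-1}+p_{y,4}z) f_{y-1}^{y,a}(z) f_y^{a,b}(z)$; summing these two down-subcases and the up-case gives exactly \eqref{eq:generting-functions-equation}.

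To make this rigorous I would introduce the successive hitting times of level $y$ after time $0$, say $\sigma_0 = 0 < \sigma_1 < \sigma_2 < \cdots$, decompose $\{\tau_a<\tau_b\}$ according to the number of visits to level $y$ strictly before $\tau_a$, and sum the resulting geometric-type series; convergence is not an issue because everything is evaluated on $|z|=1$ where all the generating functions are bounded by the corresponding probabilities, which are at most $1$. The key identities are: between consecutive visits to level $y$ the walk performs an excursion into $\{>y\}$ or into $\{<y\}$, whose contribution factors as $f_{y+1}^{y,b}$ or $f_{y-1}^{y,a}$ by the strong Markov property together with translation invariance of the first coordinate; and the final segment, from the last visit to level $y$ down to $\tau_a$, contributes the $f_{y-1}^{a,y}$ factor. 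I expect the main obstacle to be purely bookkeeping: one must be careful that the excursions into $\{<y\}$ before $\tau_a$ are exactly those counted by $f_{y-1}^{y,a}$ (return to $y$ before hitting $a$), while the terminal excursion is counted by $f_{y-1}^{a,y}$ (reach $a$ before returning to $y$), and that the two events partition correctly — there is no genuine analytic difficulty, only the need to set up the excursion decomposition cleanly so that the geometric series telescopes into the claimed closed form.
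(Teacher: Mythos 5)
Your argument is correct and is essentially the paper's proof: both decompose by the direction of the first step and by whether the walk returns to level $y$ before reaching $a$, then apply the strong Markov property at that return (the paper phrases it via $\tau_y^+=\min\{n>0:Y_n=y\}$ and splits $\{\tau_a<\tau_b\}$ into $\{\tau_a<\tau_y^+\}$ and $\{\tau_y^+<\tau_a\}$, while you split by first step first — a trivial reordering of the same computation). The geometric-series-over-excursions rigorization you sketch at the end is an equivalent but unnecessary detour; the single fixed-point identity $f_y^{a,b}=A+Bf_y^{a,b}$ obtained from one application of the strong Markov property already yields the claim, as in the paper.
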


Recall that the function $f_y^{a,b}$ is a sum of expressions of the form $p z^k$ where $k$ is a first passage location and $p$ is a probability of the corresponding trajectory. Every trajectory of random walk which starts from $y$ and reaches $a$ before $b$ can be divided into two distinct groups, depending on the direction of the first jump: up (towards NE or NW) or down (towards SE or SW). If the first jump is up to $y+1$, the process must return to $y$ before reaching $b$. The function $f_{y+1}^{y,b}$ corresponds to this part of the trajectory. On the other hand, if the first jump is down to $y-1$, the processes may either hit $a$ without reaching $y$ again, or it may return to $y$ before hitting $a$. These scenarios are represented by $f_{y-1}^{a,y}$ and $f_{y-1}^{y,a}$, respectively.

\begin{proof}
    We choose integers $a<y<b$ and define $\tau_y^+ = \min\{n > 0: Y_n = y\}$. The expectation in \eqref{eq:generating-function} can be expressed as
    \formula[eq:f-sum-of-expectations]{
    f_y^{a,b}(z) = \E^{(0,y)} \left[ z^{X(\tau_a)} \ind_{\tau_a <\tau_b,\, \tau_a < \tau_y^+} \right] + \E^{(0,y)} \left[ z^{X(\tau_a)} \ind_{\tau_y^+<\tau_a <\tau_b} \right].
    }
    By the Markov property, conditioning the first expectation on $(X_1, Y_1)$ yields
    \formula{
        \E^{(0,y)} \left[ z^{X(\tau_a)} \ind_{\tau_a <\tau_b,\, \tau_a < \tau_y^+} \right] &=(p_{y,3} z^{-1} + p_{y,4} z) \, \E^{(0,y-1)} \left[ z^{X(\tau_a)} \ind_{\tau_a <\tau_b,\, \tau_a < \tau_y} \right] \\ &= (p_{y,3} z^{-1} + p_{y,4} z) \, f_{y-1}^{a,y}(z).
    }
    In order to evaluate the last expectation in \eqref{eq:f-sum-of-expectations}, we apply the strong Markov property with conditioning on $(X(\tau_y^+), Y(\tau_y^+))$. Since $Y(\tau_y^+)=y$, we obtain
    \formula{
        \E^{(0,y)} \left[ z^{X(\tau_a)} \ind_{\tau_y^+<\tau_a <\tau_b} \right] = \E^{(0,y)} \left[ z^{X(\tau_y^+)}\ind_{\tau_y^+ < \tau_a, \tau_y^+ < \tau_b} \right] \E^{(0,y)} \left[ z^{X(\tau_a)} \ind_{ \tau_a <\tau_b}\right].
    }
    It is easy to see that the latter expectation is simply $f_y^{a,b}$, while for the former one we can repeat the arguments used for the first expectation on the right-hand side of \eqref{eq:f-sum-of-expectations}. It follows that
    \formula{
        \E^{(0,y)} \left[ z^{X(\tau_y^+)}\ind_{\tau_y^+ < \tau_a, \tau_y^+ < \tau_b} \right] = (p_{y,1} z + p_{y,2} z^{-1}) \, f_{y+1}^{y, b}(z) + (p_{y,3} z^{-1} + p_{y,4} z) \, f_{y-1}^{y, a}(z)
    }
    Inserting the above results into \eqref{eq:f-sum-of-expectations} yields the assertion of the lemma.
\end{proof}

The lemma might be considered as a first step in proving Theorem \ref{thm:main} because it gives a formula for the generating function of the first passage location. It is expressed in terms of some generating functions which are easier to work with. More precisely, for $a<y<b$ we have
\formula[eq:f-explicit]{
    f_y^{a,b}(z) = \frac{(p_{y,3} z^{-1} + p_{y,4} z) f_{y-1}^{a,y}(z)}{1- (p_{y,1} z + p_{y,2} z^{-1}) \, f_{y+1}^{y, b}(z) - (p_{y,3} z^{-1} + p_{y,4} z) \, f_{y-1}^{y, a}(z) }.
}
If $y=a$ or $y=b$, by \eqref{eq:generating-function}, the generating function is identically equal to 1 or 0, respectively. Similarly, for $b<y<a$, we have
\formula[eq:f-explicit-mirror]{
    f_y^{a,b}(z) = \frac{(p_{y,1} z + p_{y,2} z^{-1}) f_{y+1}^{a,y}(z)}{1- (p_{y,1} z + p_{y,2} z^{-1}) \, f_{y+1}^{y, a}(z) - (p_{y,3} z^{-1} + p_{y,4} z) \, f_{y-1}^{y, b}(z) }.
}

Recall that in \eqref{eq:generating-function}, $b$ denotes an upper barrier of $Y_n$. Since the original random walk has no restrictions on movement in the lattice, we will be finally interested in moving the barrier up to infinity. By taking $b \to \infty$ and using the monotone convergence theorem, we define
\formula[eq:generating-function-limit]{
f_y^{a, \infty}(z) = \lim_{b \to \infty} f_y^{a,b}(z) = \E^{(0,y)} \left[ z^{X(\tau_a)} \ind_{\tau_a < \infty} \right].
}

We will study the location of the first passage of $a=0$ for random walk starting from $(0,y)$, where $y \in \Z$. In this case, by \eqref{eq:f-explicit}, for integer upper barrier $b>0$ we have
\formula[eq:f-explicit-simplified]{
    f_y^{0, b}(z) = \frac{(p_{y,3} z^{-1} + p_{y,4} z) f_{y-1}^{0,y}(z)}{1- (p_{y,1} z + p_{y,2} z^{-1})\, f_{y+1}^{y, b}(z)- (p_{y,3} z^{-1} + p_{y,4} z) \, f_{y-1}^{y, 0}(z) }.
}

\subsection{Recurrences} Now we will focus on functions appearing on the right-hand side of \eqref{eq:f-explicit-simplified}. Observe that with formulae \eqref{eq:f-explicit} and \eqref{eq:f-explicit-mirror} one can immediately construct the recurrence equations for these generating functions.

Substituting $a=0$ and $b=y+1$ into \eqref{eq:f-explicit}, we get
\formula[eq:system-fy0y+1]{
    &f_{y}^{0,y+1}(z) = \frac{(p_{y,3} z^{-1} + p_{y,4} z)f_{y-1}^{0,y}(z)}{1 - (p_{y,3} z^{-1} + p_{y,4} z) f_{y-1}^{y,0}(z)} \quad \text{for } y \geq 1, & f_0^{0,1}(z) = 1. &
}
Similarly, rewriting \eqref{eq:f-explicit-mirror} with $a=y+1$ and $b=0$ yields
\formula[eq:system-fyy+10]{
    &f_{y}^{y+1,0}(z) = \frac{p_{y,1} z + p_{y,2} z^{-1}}{1 - (p_{y,3} z^{-1} + p_{y,4} z) f_{y-1}^{y,0}(z)} \quad \text{for } y \geq 1, & f_0^{1,0}(z) = 0. &
}

We divide both sides of \eqref{eq:system-fy0y+1} and \eqref{eq:system-fyy+10}. Solving for $f_y^{y+1,0}$ yields
\formula[eq:system-fy0y+1-bis]{
f_y^{y+1,0}(z) = \frac{p_{y,1} z + p_{y,2} z^{-1}}{p_{y,3} z^{-1} + p_{y,4} z} \, \frac{f_y^{0, y+1}(z)}{f_{y-1}^{0,y}(z)}.
}
Now substituting this into \eqref{eq:system-fy0y+1}, after some simple manipulations, we have a recurrence given by
\formula{
    \frac{1}{f_y^{0,y+1}(z)} = \frac{1}{p_{y,3} z^{-1} + p_{y,4} z} \, \frac{1}{f_{y-1}^{0,y}(z)} - \frac{p_{y-1,1}z+p_{y-1,2}z^{-1}}{p_{y-1,3}z^{-1}+p_{y-1,4}z} \, \frac{1}{f_{y-2}^{0,y-1}(z)}
}
with initial values
\formula{
& f_0^{0,1}(z) = 1, & f_1^{0,2}(z) = p_{1,3}z^{-1} + p_{1,4}z. &
}

In order to study rescaled passage locations, we introduce another class of probability generating functions, corresponding to the distributions of $\frac{1}{2}(X(\tau_a) - y +a)$. For integers $a,b,y$ such that $\min\{a,b\} \leq y \leq \max\{a,b\}$ and $|w|=1$ denote
\formula[eq:F-definition]{
    F_{y}^{a, b}(w) =  \E^{(0,y) }\left[w^{\frac{1}{2}(X(\tau_a)-y+a)} \ind_{\tau_a < \tau_b}\right].
}
Clearly,
\formula[eq:f-F]{
    f_{y}^{a, b}(z) = z^{y-a} F_{y}^{a, b}(z^2).
}

Using this, one can rewrite \eqref{eq:f-explicit-simplified} in terms of $\smash{F_y^{a,b}}$, with $w=z^2$. The generating function of the rescaled first passage location is thus given by
\formula[eq:F-explicit-simplified]{
F_{y}^{0,b}(w) = \frac{(p_{y,3} w^{-1} + p_{y,4}) F_{y-1}^{0,y}(w)}{1- (p_{y,1} w + p_{y,2}) \, F_{y+1}^{y, b}(w) - (p_{y,3} w^{-1} + p_{y,4}) \, F_{y-1}^{y, 0}(w) }.
}
Moreover, by \eqref{eq:system-fy0y+1-bis}, we have the following recurrence:
\formula[eq:system-Fy0y+1]{
    \frac{1}{F_y^{0,y+1}(w)} = \frac{1}{p_{y,3} w^{-1} + p_{y,4}} \frac{1}{F_{y-1}^{0,y}(w)} - \frac{p_{y-1,1}w+p_{y-1,2}}{p_{y-1,3}w^{-1}+p_{y-1,4}} \frac{1}{F_{y-2}^{0,y-1}(w)},
}
where the initial conditions are
\formula{
   & F_0^{0,1}(w) = 1, & F_1^{0,2}(w) = p_{1,3}w^{-1}+p_{1,4}. &
}
Furthermore, the rescaled counterpart of \eqref{eq:system-fy0y+1-bis} reads
\formula[eq:Fyy+10]{
    F_y^{y+1,0}(w) = \frac{p_{y,1}w+p_{y,2}}{p_{y,3}w^{-1}+p_{y,4}} \, \frac{F_y^{0,y+1}(w)}{F_{y-1}^{0,y}(w)}
}
for $y \geq 1$.

Note that the solution of \eqref{eq:system-Fy0y+1} can be represented as a continued fraction. In the next step, we will prove that it also admits a product representation as a rational function. However, this procedure will require introducing certain special classes of rational functions, which we define and study in Section \ref{sec:rational-functions}. 

\section{Bell-shaped distributions}\label{sec:bell-shape}
\subsection{Bell-shaped sequences} We denote sequences by $a(k)$ instead of $a_k$, where $k$ ranges over the integers. We say that a sequence $a(k)$ is one-sided if $a(k) = 0$ for $k <0$. The $n$th iterated difference of $a(k)$ is given recursively by $\Delta^n a(k) = \Delta \Delta^{n-1} a(k)$, where $\Delta a(k) = a(k+1) - a(k)$. Moreover, a sequence $a(k)$ is said to change sign exactly $N$ times if there is a subsequence $a(k_0), a(k_1), \ldots, a(k_N)$ such that $a(k_{j-1})a(k_j) < 0$ for $j=1,2,\ldots, N$, and $N$ is the biggest number with this property.

We define a convolution of sequences $a(k), b(k)$ in the standard way:
\formula{
    (a*b)(k) = \sum_{j=-\infty}^\infty a(k-j)b(j)
}
whenever the series converges for every $k$. Clearly, this condition is satisfied for probabilistic sequences.

Below we discuss the most important classes of sequences in this paper.

\begin{definition}
    A nonnegative sequence $(a(k): k \in \Z)$, not identically zero, is
    \begin{enumerate}[label = (\alph*)]
        \item bell-shaped if it converges to zero at $\pm \infty$ and the sequence of its $n$th iterated differences $\Delta^n a(k)$ changes sign exactly $n$ times for every $n = 1,2,3,\ldots$;
        \item completely monotone if it is one-sided and the inequality $(-1)^n \Delta^n a(k) \geq 0$ holds for every $n,k =0,1,2\ldots$;
        \item $\amcm$ (absolutely monotone then completely monotone) if sequences $(a(k): k \geq 0)$ and $(a(-k): k \geq 0)$ are both completely monotone;
        \item\label{it:pólya-frequency-definition} summable Pólya frequency sequence if it is summable and for every bounded sequence $b(k)$ the sequence $(a*b)(k)$ has no more sign changes than $b(k)$. 
    \end{enumerate}
\end{definition}

Continuous analogues of bell-shaped sequences, bell-shaped functions, are known in mathematical literature since the 1940s. Although initially studied in the context of statistical games, they can be found throughout the wider field of probability theory. Indeed, many important distributions have bell-shaped densities: normal distribution, Cauchy distribution, or Lévy distribution. In 1984, Gawronski \cite{gawronski} claimed that every stable distribution has a bell-shaped density. His proof, however, turned out to be partially incorrect, and the problem remained open for almost 40 years. The full proof was provided by Kwaśnicki in 2020 \cite{kwasnicki}.

Despite a relatively long history of bell-shaped functions, their discrete counterpart was proposed very recently in \cite{kw}. As it turns out, a lot of results that can be proved for bell-shaped functions hold also in the discrete case. For example, every discrete stable distribution has a bell-shaped probability mass function (see Section 6 in \cite{kw} and Section 1.3 in \cite{kw2}). However, the theory of bell-shaped sequences is not entirely parallel to its continuous counterpart: while there are no compactly supported bell-shaped functions, many finitely supported sequences are bell-shaped.

The class of $\amcm$ sequences, defined in \cite{kw}, was used to characterize all bell-shaped sequences. An analogous class of functions played a role in characterizing all bell-shaped functions in \cite{kwasnicki}. Completely monotone sequences (and functions) have been known and studied for over one hundred years. For example, Hausdorff characterized completely monotone sequences as moment sequences (see \cite{hausdorff1, hausdorff2}). A lot of interesting results are also available for Pólya frequency sequences (and functions), see \cite{karlin}. It is worth mentioning that the original definition is related to the notion of total positivity, while the one presented in item \ref{it:pólya-frequency-definition} is equivalent to it only for summable sequences.

\subsection{Characterization theorems} In this section, we gather some results involving bell-shaped sequences and related classes of sequences. Among the most important tools in this paper, there are characterization theorems. The one for bell-shaped sequences was proved by the author and Kwaśnicki in \cite{kw2} (see Theorem 1.1 therein).

\begin{theorem}\label{thm:bs-characterization}
    A two-sided nonnegative sequence is bell-shaped if and only if it is the convolution of a summable Pólya frequency sequence and an $\amcm$ sequence which converges to zero as $k \to \pm \infty$.
\end{theorem}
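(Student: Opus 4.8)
The plan is to route both implications through the discrete Fourier transform $\widehat{a}(\xi) = \sum_{k\in\Z} a(k)e^{ik\xi}$ on the unit circle, equivalently the generating function $A(z)=\sum_{k}a(k)z^{k}$ on $\{|z|=1\}$, using throughout the identity $\widehat{\Delta^{n}a}(\xi) = (e^{i\xi}-1)^{n}\,\widehat{a}(\xi)$ together with the standard correspondence between the number of sign changes of a real summable sequence and the number of zeros of its transform on $(-\pi,\pi]$.

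For the ``if'' direction I would first isolate two reductions. The first is elementary: for \emph{any} nonnegative sequence $u$ that converges to $0$ at $\pm\infty$ and is not identically zero, $\Delta^{n}u$ changes sign \emph{at least} $n$ times. This follows by induction from the Rolle-type fact that if $w\to 0$ at $\pm\infty$ then $\Delta w$ has at least one more sign change than $w$: listing the sign blocks of $w$ as $s_{0},s_{1},\dots,s_{m}$, the transitions $0\to s_{0}$, $s_{i}\to s_{i+1}$ and $s_{m}\to 0$ force $\Delta w$ to realize, in order, the sign pattern $s_{0},s_{1},\dots,s_{m},-s_{m}$. Consequently, for a bell-shaped sequence only the \emph{upper} bound $\#\mathrm{sc}(\Delta^{n}a)\leq n$ has content. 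The second reduction: since convolution with a summable Pólya frequency sequence is variation-diminishing by definition and preserves the properties ``nonnegative, not identically zero, convergent to $0$ at $\pm\infty$'', for $a=b*c$ with $b$ a summable Pólya frequency sequence and $c$ an $\amcm$ sequence tending to $0$ we get $\#\mathrm{sc}(\Delta^{n}a)=\#\mathrm{sc}(b*\Delta^{n}c)\leq\#\mathrm{sc}(\Delta^{n}c)$, and it remains to show that an $\amcm$ sequence tending to $0$ at $\pm\infty$ is itself bell-shaped. For this I would use Hausdorff's theorem: such a $c$ satisfies $c(k)=\int_{[0,1)}t^{k}\mu_{+}(dt)$ for $k\geq 0$ and $c(k)=\int_{[0,1)}t^{-k}\mu_{-}(dt)$ for $k\leq 0$, with $\mu_{+}([0,1))=\mu_{-}([0,1))=c(0)$, so that near $\{|z|=1\}$ the function $\widehat{c}(\xi)=c(0)+\int\frac{te^{i\xi}}{1-te^{i\xi}}\mu_{+}(dt)+\int\frac{te^{-i\xi}}{1-te^{-i\xi}}\mu_{-}(dt)$ is a sum of a Cauchy/Stieltjes-type function of $z=e^{i\xi}$ and one of $1/z$; after a routine truncation $\mu_{\pm}\mapsto\mu_{\pm}|_{[0,1-\eps]}$ (which gives exponentially decaying $\amcm$ sequences, whence analyticity on a genuine annulus) one checks it has no zeros on $\{|z|=1\}\setminus\{1\}$, so $\widehat{\Delta^{n}c}$ has a zero of order exactly $n$ and no other zero on the circle, and the zero/sign-change correspondence yields $\#\mathrm{sc}(\Delta^{n}c)\leq n$, which together with the lower bound makes $c$ bell-shaped.

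For the ``only if'' direction one starts from a bell-shaped $a$ and must manufacture the factorization $a=b*c$. Here I would extract, from ``$\Delta^{n}a$ has exactly $n$ sign changes for every $n$'', a structural description of the zero set of $A$ (and of $(z-1)^{n}A(z)$): informally, $A$ continued off the circle must be of Hermite--Biehler/Laguerre--Pólya type in the variable $z$, with only simple and suitably separated zeros and poles, so that $A$ admits a factorization $A=B\cdot C$ in which $B$ is the generating function of a summable Pólya frequency sequence --- of the Edrei--Thoma product form $z^{m}\prod_{j}\tfrac{1+\alpha_{j}z}{1-\gamma_{j}z}\prod_{j}\tfrac{1+\beta_{j}z^{-1}}{1-\delta_{j}z^{-1}}$ --- and $C$ is a combination of Cauchy transforms of measures on $[0,1)$ in $z$ and in $1/z$, hence the generating function of an $\amcm$ sequence, recognized via Hausdorff's moment criterion. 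One then checks that the resulting $b$ and $c$ are nonnegative with the required one-sided monotonicity and that $c\to 0$ at $\pm\infty$.

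I expect the main obstacle to be exactly this ``only if'' factorization: converting the infinite family of exact sign-change conditions into a genuine product decomposition of the transform, while coping with the fact that a bell-shaped sequence need only decay polynomially, so that $A$ is a priori defined only on the circle and not on an open annulus. Making the attendant approximation/limiting argument rigorous --- and in particular ensuring that the number of sign changes of $\Delta^{n}a$ does not merely survive as an upper bound but stays exactly equal to $n$ in the relevant limits --- is where the real work lies.
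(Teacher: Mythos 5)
The paper does not prove Theorem~\ref{thm:bs-characterization} itself; it imports it as Theorem~1.1 of \cite{kw2}, so there is no in-paper proof to compare against, and I assess the sketch on its own terms.

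Your ``if'' direction has a reasonable skeleton --- reduce, via the variation-diminishing property of summable Pólya frequency sequences, to showing that an $\amcm$ sequence tending to zero is bell-shaped, and establish the automatic lower bound $\#\mathrm{sc}(\Delta^n a)\geq n$ separately --- but the crucial step rests on a ``zero/sign-change correspondence'' that you invoke as though it were a standard fact, and it is not. There is no ready-made theorem asserting that the number of sign changes of a summable two-sided real sequence is bounded by the number of zeros (counted with multiplicity) of its transform on the unit circle; the results of this flavour in the literature (Descartes' rule, cyclic variation diminishing, Schoenberg's variation-diminishing theory) concern the positive real axis or finite/periodic data and do not transfer directly. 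Moreover, the truncation $\mu_\pm\mapsto\mu_\pm|_{[0,1-\eps]}$ breaks the compatibility constraint $\mu_+([0,1))=\mu_-([0,1))=a(0)$ built into the $\amcm$ representation, so the truncated object is not obviously the generating function of an $\amcm$ sequence with a single consistent value at $k=0$; you would also have to rule out zeros accumulating on the circle as $\eps\to 0$. The argument in \cite{kw2} instead routes through the equivalent exponential representation of the characteristic function (their Theorem~1.1(c)), which is where the exact sign-change count is actually controlled; your sketch replaces that work with an unproven correspondence.

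Your ``only if'' direction is, as you acknowledge, a strategy rather than a proof. Continuing $A$ off the circle and reading off a Hermite--Biehler/Laguerre--Pólya factorization is exactly what cannot be done directly: a bell-shaped sequence may decay only polynomially, so $A$ need not extend to any open annulus, and the regularize--factor--pass-to-the-limit scheme, together with a proof that the factorization survives the limit and that the resulting factors remain a summable Pólya frequency sequence and a genuine $\amcm$ sequence, constitutes essentially the entire content of the theorem. As written, the proposal names the obstruction but does not overcome it.
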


The original version of the above theorem provides another equivalent condition: the characteristic function of a bell-shaped sequence admits an exponential representation described in  Theorem 1.1(c) in \cite{kw2}.

The characterization theorem for $\amcm$ sequences is a consequence of the work of Hausdorff. For a detailed proof, see Section 2.3 in \cite{kw2}.

\begin{theorem}\label{thm:amcm-generating-function}
    A sequence $a(k)$ is an $\amcm$ sequence that converges to zero at $\pm \infty$ if and only if there are (positive) measures $\mu_+$ and $\mu_-$ such that
    \formula[eq:amcm-generating-function]{
        \sum_{-\infty}^\infty a(k)x^k =\int_{[0,1)} \frac{1}{1-sx} \, \mu_+(dx) + \int_{[0,1)} \frac{1}{1-s/x} \, \mu_-(dx)-a(0),
    }
    where $|x|=1$, $x \neq 0$ and
    \formula[eq:a(0)]{
    a(0) = \int_{[0,1)} \mu_+(ds) = \int_{[0,1)} \mu_-(ds).
    }
\end{theorem}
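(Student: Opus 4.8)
The plan is to reduce the statement to Hausdorff's classical theorem that a one-sided sequence $(b(k):k\geq0)$ is completely monotone precisely when $b(k)=\int_{[0,1]}s^k\,\nu(ds)$ for a finite positive measure $\nu$ on $[0,1]$ (see \cite{hausdorff1,hausdorff2}), together with the elementary observation that $b(k)\to\nu(\{1\})$ as $k\to\infty$, so that decay to zero is equivalent to $\nu$ being carried by $[0,1)$. The representation in Theorem~\ref{thm:amcm-generating-function} should then come out by applying this separately to the forward and the backward half of an $\amcm$ sequence and summing two geometric series.

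In detail, for the ``only if'' direction I would start from an $\amcm$ sequence $a(k)$ with $a(k)\to0$ as $k\to\pm\infty$. By definition $(a(k):k\geq0)$ and $(a(-k):k\geq0)$ are completely monotone, so Hausdorff's theorem yields finite positive measures $\mu_+$ and $\mu_-$, necessarily carried by $[0,1)$ because $a$ decays, with $a(k)=\int_{[0,1)}s^k\,\mu_+(ds)$ for $k\geq0$ and $a(-k)=\int_{[0,1)}s^k\,\mu_-(ds)$ for $k\geq0$. Evaluating at $k=0$ gives $\int_{[0,1)}\mu_+(ds)=a(0)=\int_{[0,1)}\mu_-(ds)$, which is \eqref{eq:a(0)}. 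Substituting the two integral formulas into the bilateral series, splitting it into $k\geq0$ and $k<0$, and using Fubini for the finite measures, for $|x|=1$ the partial sums become
\formula{
\sum_{k=-N}^{N}a(k)x^k=\int_{[0,1)}\sum_{k=0}^{N}(sx)^k\,\mu_+(ds)+\int_{[0,1)}\sum_{k=1}^{N}(s/x)^k\,\mu_-(ds).
}
Letting $N\to\infty$, the inner sums converge to $\tfrac{1}{1-sx}$ and $\tfrac{1}{1-s/x}-1$, and the constant term $-1$ integrated against $\mu_-$ contributes $-a(0)$; this is precisely \eqref{eq:amcm-generating-function}.

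The only delicate step, and the \emph{main obstacle}, is passing to the limit $N\to\infty$ under the integral sign, since a completely monotone sequence converging to zero need not be summable (for instance $1/(k+1)=\int_0^1 s^k\,ds$), so $\sum_k a(k)x^k$ need not converge absolutely on $|x|=1$. I would handle this with a uniform lower bound: from $|1-sx|^2=(1-s)^2+2s(1-\re x)$ one gets, treating $s\leq\tfrac12$ and $s>\tfrac12$ separately, that $|1-sx|^2\geq\min\{\tfrac14,\,1-\re x\}>0$ for every $s\in[0,1)$ as long as $x\neq1$, and the same bound holds for $|1-s/x|$ since $|1-s/x|=|1-sx|$ on the unit circle. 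Hence the remainders $(sx)^{N+1}/(1-sx)$ (and the analogous expressions with $s/x$) are uniformly bounded and tend to $0$ pointwise for $s<1$; as $\mu_\pm$ give no mass to $\{1\}$, dominated convergence applies. (At $x=1$ the bilateral series may actually diverge, so the identity is understood on the unit circle with $x\neq1$.)

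For the ``if'' direction I would run the computation backwards: given finite positive measures $\mu_+,\mu_-$ on $[0,1)$ with a common total mass, call it $a(0)$, define $a(k)=\int_{[0,1)}s^k\,\mu_+(ds)$ for $k\geq0$ and $a(-k)=\int_{[0,1)}s^k\,\mu_-(ds)$ for $k\geq1$; the two prescriptions agree at $k=0$. By the easy direction of Hausdorff's theorem both one-sided halves are completely monotone, so $a$ is $\amcm$; dominated convergence together with the support condition gives $a(k)\to0$ at $\pm\infty$; and the computation above shows that $\sum_k a(k)x^k$ equals the right-hand side of \eqref{eq:amcm-generating-function}. Combined with the necessity part, this establishes the equivalence.
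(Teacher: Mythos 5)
The paper does not prove this theorem itself; it points to Section~2.3 of \cite{kw2} and remarks only that the statement is ``a consequence of the work of Hausdorff.'' Your argument is exactly that reduction, carried out correctly: Hausdorff's moment theorem applied separately to the two one-sided halves, with decay to zero translated into $\mu_\pm(\{1\})=0$, and the compatibility condition obtained by evaluating both representations at $k=0$. The one step that genuinely needs care is the interchange of sum and integral when $a$ is not summable, and your uniform estimate $|1-sx|^2=(1-s)^2+2s(1-\re x)\geq\min\{\tfrac14,\,1-\re x\}>0$ for $|x|=1$, $x\neq 1$, together with dominated convergence and $\mu_\pm(\{1\})=0$, handles this properly. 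Two small remarks. The condition ``$x\neq 0$'' in the statement is presumably a typo for ``$x\neq 1$'' (as you observe, the bilateral series need not converge at $x=1$), and $\mu_\pm(dx)$ in the display should read $\mu_\pm(ds)$. And in the ``if'' direction, having constructed $\tilde a(k)=\int_{[0,1)}s^{|k|}\,\mu_\pm(ds)$ with the prescribed generating function, you should add one line invoking uniqueness of Fourier/Laurent coefficients of the common boundary function to conclude that the given $a$ coincides with $\tilde a$; this is routine but presently left implicit.
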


Like bell-shaped sequences, generating functions of $\amcm$ sequences also obey an exponential representation described in Lemma 2.2 in \cite{kw2}. The class of generating functions of $\amcm$ sequences is closed under limits, which follows from Lemma 2.5 in \cite{kw2}. The same property is preserved by the class of generating functions of bell-shaped sequences.

The proof of the following characterization theorem for summable Pólya frequency sequences can be found in Karlin's monograph (see Theorem 8.9.5 in \cite{karlin}).

\begin{theorem}\label{thm:pf-generating-function}
    A sequence $a(k)$ is a summable Pólya frequency sequence if and only~if
    \formula{
        \sum_{k = -\infty}^\infty a(k) x^k = x^m \exp\biggl(c^+ x + \frac{c^-}{x} + c\biggr) \prod_{k = 1}^\infty \frac{(1 + p^+_k x)(1 + p^-_k /x)}{(1 - q^+_k x)(1 - q^-_k/x)} \, ,
    }
    where $m$ is an integer, $c^+, c^- \geq0$, $c \in \R$, and $p^\pm_k, q^\pm_k$ are nonnegative summable sequences with $p^\pm_k \le 1$ and $q^\pm_k < 1$.
\end{theorem}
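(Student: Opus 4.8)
The plan is to deduce the statement from the classical theory of totally positive doubly-infinite Toeplitz matrices, which is the form in which it appears in Karlin's monograph. The first step is to translate the variation-diminishing condition of item~\ref{it:pólya-frequency-definition} into a matrix-theoretic statement: a summable nonnegative sequence $a(k)$ is a Pólya frequency sequence if and only if the doubly-infinite Toeplitz matrix $T = [a(i-j)]_{i,j \in \Z}$ is totally positive, i.e.\ all of its minors of all orders are nonnegative. One direction follows from the Cauchy--Binet formula: if $T$ is totally positive, then convolution with $a$ is variation-diminishing on bounded sequences, since sign changes of $(a*b)(k)$ can be read off from minors of $T$ paired against the sign pattern of $b$. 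The converse is obtained by exhibiting, for any prescribed sign pattern, bounded test sequences $b$ whose convolution with $a$ forces a given minor to be nonnegative; summability of $a$ guarantees these convolutions are well defined.

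The second step is the analytic heart of the argument: Edrei's characterization of totally positive Toeplitz matrices. Here one studies the generating function $\Psi(x) = \sum_{k \in \Z} a(k) x^k$, which by summability converges at least on $|x| = 1$; nonnegativity of the $2 \times 2$ minors of $T$ gives log-concavity of $(a(k))$ on each of its two ends, hence convergence on a genuine annulus $r < |x| < R$ with $r < 1 < R$, and shows $\Psi$ has no zeros there. Nonnegativity of all higher minors is then leveraged, via Hadamard--Nevanlinna factorization of meromorphic functions together with an analysis of the signs of Wronskian-type determinants, to conclude that $\Psi$ extends to a meromorphic function on $\C \setminus \{0\}$ of genus at most one whose zeros and poles are all real: the zeros contribute factors $(1 + p_k^+ x)$ and $(1 + p_k^- /x)$, the poles contribute factors $1/(1 - q_k^+ x)$ and $1/(1 - q_k^- /x)$, the residual exponential growth is captured by $\exp(c^+ x + c^-/x + c)$, and the behaviour at $0$ and $\infty$ by $x^m$. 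The constraints $p_k^\pm \leq 1$, $q_k^\pm < 1$ and the summability of $p_k^\pm, q_k^\pm$ come out of the requirement that the product converge on the annulus and that $\Psi$ be finite on $|x| = 1$.

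An alternative, more self-contained route for the third (reduction) step is to factor $a = a_+ * a_-$, where $a_+$ is supported on $\{k \geq 0\}$ and $a_-$ on $\{k \leq 0\}$, each again a Pólya frequency sequence --- this follows by taking appropriate corner limits of $T$, which inherit total positivity --- apply the Aissen--Schoenberg--Whitney--Edrei theorem for one-sided Pólya frequency sequences to each factor, and multiply the two resulting generating functions, recombining the monomial and the two exponential terms into the stated form.

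The main obstacle is clearly the second step: passing from ``all minors of the Toeplitz matrix are nonnegative'' to the rigid multiplicative structure of $\Psi$. This needs the full Edrei machinery --- relating total positivity to the location of the zeros and poles of a meromorphic function and bounding its genus --- and cannot really be shortcut; everything else (the Cauchy--Binet equivalence, the corner arguments, the convergence estimates) is routine by comparison. For the purposes of this paper it suffices to cite Theorem~8.9.5 of \cite{karlin}, where exactly this deduction is carried out.
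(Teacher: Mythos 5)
Your proposal is correct and, crucially, ends by observing that citing Theorem~8.9.5 in Karlin suffices, which is exactly what the paper does: it offers no proof of this classical theorem and simply references Karlin's monograph. The preceding sketch (variation-diminishing $\Leftrightarrow$ total positivity of the Toeplitz matrix, then Edrei's analysis via Hadamard--Nevanlinna factorization and genus bounds, with the one-sided ASWE theorem as an alternative reduction) is an accurate outline of the argument behind that citation, though of course you would have to carry out the Edrei step in full to make it self-contained.
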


By the form of the generating function of Pólya frequency sequences, one can easily read that this class of sequences contains geometric sequences $a(k) = p^k \ind_{\N}(k)$, two-point sequences $a(k)=p \ind_{\{0\}}(k) + (1-p) \ind_{\{1\}}(k)$, Poisson sequences $a(k) = \lambda^k (k!)^{-1} \ind_{\N}(k)$, their mirror images $a(-k)$, their convolutions, shifts and limits. Here of course $p \in (0,1)$ and $\lambda>0$.

\section{Rational functions}\label{sec:rational-functions}

\subsection{Notation and definitions} Let us consider rational functions $P/Q$, where $Q(x) = x^m$ with $m \in \N$. This type of functions can be identified with generating functions of finitely supported two-sided sequences. In mathematical literature, they are sometimes called Laurent polynomials; see \cite{gm, hendriksen, ss}.

For $n,m \in \N$, let us consider
\formula[eq:lorentz]{
    F(x) = \sum_{k=-m}^n a_k x^k,
}
where $x \in \R$ and $(a_k)$ is a sequence of real numbers such that $a_{n} \neq 0 $ and $a_{-m} \neq 0$. By $d_0(F)$ we denote the number of positive real zeros of $F$, counted with multiplicity. We also define the upper and lower degree of $F$ by, respectively, $d_+(F)=n$ and $d_-(F)=m$.  It is easy to verify that if $F$ and $G$ satisfy \eqref{eq:lorentz}, we have
\formula[]{
    \label{eq:dpm} d_{\pm}(F-G) &\leq \max\{ d_\pm (F), d_\pm (G)\}, \\
    \label{eq:d0} d_{0}(F) &\leq d_+ (F) + d_- (F) .
}
In particular, if $F,G$ are polynomials, we have $d_-(F)=d_-(G)=0$ and the above inequalities are clear.

Sequences (finite or infinite) denoted by Greek letters will be indexed with natural numbers, starting from one. We will say that sequences $(\alpha_k), (\beta_k)$ interlace if $\alpha_k < \beta_k < \alpha_{k+1}$ for every $k = 1,2,3,\ldots$ In order to define the appropriate class of rational functions, we will need a certain class of interlacing sequences, so we will now introduce their formal definition.

\begin{definition}
    We say that (finite) sequences
    \formula{
    & (\alpha_{k}: k = 1,2,\ldots,A), & (\beta_{k}: k = 1,2,\ldots, B), & \\
    & (\alpha'_{k}: k = 1,2,\ldots, A'), & (\beta'_k: k =1,2,\ldots, B') &
    }
    interlace, if $A \leq A' \leq A+1$ and $B \leq B' \leq B+1$, where $A,A', B, B' \in \N$, and the following \textit{interlacing condition} is satisfied:
    \formula[eq:interlacing-condition]{
    0< \ldots < \alpha_{2} < \alpha'_{2} < \alpha_{1} < \alpha'_{1} < 1 < \beta'_{1} < \beta_{1} < \beta'_{2} < \beta_{2} < \ldots
    }
    If $A'=A$, the above sequence of inequalities ends on the left with inequalities $0 < \alpha_A < \alpha'_{A'} < \ldots$ However, if $A' = A+1$, the ending takes the form $0<\alpha'_{A'} < \alpha_{A}<\ldots$ A similar remark applies to the cases $B'=B$ and $B'=B+1$.
\end{definition}

Polynomials with interlacing zeros seem to appear naturally in many different areas of mathematics; see \cite{dj, fisk, gf, johnson}.

\begin{definition}
A function $F$ belongs to $\mathscr{P}$ class if there are $\lambda > 0$ and sequences $(\alpha_{k}:k=1,2,\ldots,A), (\beta_{k}:k=1,2,\ldots,B)$ satisfying
\formula{
0< \alpha_A < \ldots < \alpha_2 <\alpha_1 < 1< \beta_1 < \beta_2< \ldots <\beta_B,
}
such that $F$ can be represented as
\formula[eq:w1-product]{
F(x) = \lambda \prod_{k=1}^{A} \left( \frac{1}{\alpha_{k}} - \frac{1}{x} \right) \prod_{k=1}^{B} (\beta_{k} - x).
}
We will write shortly $F \in \mathscr{P}$.
\end{definition}

In the above definition we have $A = d_-(F)$ and $B = d_+(F)$. Depending on a context, we will sometimes denote $A$ and $B$ from product representation \eqref{eq:w1-product} of $F \in \mathscr{P}$ as $A_F$ and $B_F$, respectively.

For the sake of consistency, we also assume that $\smash{\prod_{k=1}^0 a_k = 1}$ for any sequence $(a_k)$. This implies that every positive constant function is in $\mathscr{P}$, with $A=B=0$. Straight from the definition, it follows that any rational function $F$ satisfying \eqref{eq:lorentz} belongs to class $\mathscr{P}$ if and only if $F(1)>0$ and $d_0(F) = d_+(F) + d_-(F)$.

\begin{definition}
    We say that functions $F, G \in \mathscr{P}$ interlace if there exist positive constants $\lambda, \lambda'$ and interlacing sequences $(\alpha_k), (\beta_k), (\alpha'_k), (\beta'_k)$ such that
    \formula{
        & F(x) = \lambda \prod_{k=1}^{A} \left( \frac{1}{\alpha_{k}} - \frac{1}{x} \right) \prod_{k=1}^{B} (\beta_{k} - x), & G(x) = \lambda' \prod_{k=1}^{A'} \left( \frac{1}{\alpha'_{k}} - \frac{1}{x} \right) \prod_{k=1}^{B'} (\beta'_{k} - x). &
    }
\end{definition}

Note that the interlacing relation is not symmetric and describing it in words can sometimes be confusing, even if in most cases it will be clear from the context. Inspired by Fisk's notation (see \cite{fisk}), if $F$ and $G$ are both in $\mathscr{P}$ and they interlace, we will write shortly $F \ll G$. Following the adopted convention, we assume that if $A=B=0$ ($F$ is constant) and $A' \leq 1, B' \leq 1$, then $F \ll G$. In particular, any two positive constant functions interlace.

\begin{definition}
    A function $F$ belongs to class $\mathscr{Q}$ if there are $\lambda > 0$ and interlacing sequences $(\alpha_{k}), (\beta_k), (\alpha_k'), (\beta_k')$ such that
    \formula{
        F(x) = \lambda \, \cfrac{\displaystyle{\prod_{k=1}^{A} \left( \frac{1}{\alpha_{k}} - \frac{1}{x} \right) \prod_{k=1}^{B} (\beta_{k} - x)}}{\displaystyle{\prod_{k=1}^{A'} \left( \frac{1}{\alpha'_{k}} - \frac{1}{x} \right) \prod_{k=1}^{B'} (\beta'_{k} - x)}}.
    }
    We will write shortly $F \in \mathscr{Q}$.
\end{definition}

Here the name of the class stands for 'zeros-poles interlacing rational functions'. One can immediately deduce that $\mathscr{Q}$ is closely related to interlacing functions. More precisely, $F \in \mathscr{Q}$ if and only if $F=G/H$, where $G \ll H$.

\subsection{Properties of rational functions} Now we prove some properties of functions from classes $\mathscr{P}$ and $\mathscr{Q}$. We present the results in two lemmas below.

\begin{lemma}\label{thm:interlacing-w1}
    Assume that $F \ll G$. Given $a,b,c \geq 0$, not all equal to zero, denote
    \formula{
    H(x) = G(x) - (ax + b + cx^{-1}) \, F(x).
    }
    If $H(1)>0$, then $H \in \mathscr{P}$ and $G \ll H$. Moreover, we have
    \formula{
    & A_H = \max\{A_G, A_F + \ind_{c \neq 0} - \ind_{b=c=0}\}, & B_H = \max\{ B_G, B_F + \ind_{a \neq 0} - \ind_{a =b=0} \}. &
    }
\end{lemma}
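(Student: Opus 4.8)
The plan is to track the zeros of $H$ via a sign-counting argument on the real line, exactly as one does with interlacing polynomials. First I would write $F$ and $G$ in their product forms from the definition of $F \ll G$, with zeros $\alpha_k, \beta_k$ for $F$ and $\alpha'_k, \beta'_k$ for $G$ satisfying the interlacing condition \eqref{eq:interlacing-condition}. The key observation is that $H(x) = G(x) - (ax+b+cx^{-1})F(x)$, so at each zero $\alpha'_k$ or $\beta'_k$ of $G$ we have $H = -(a\alpha'_k+b+c/\alpha'_k)F(\cdot)$, and since $a\alpha'_k+b+c/\alpha'_k > 0$ on $(0,\infty)$ (the three constants are nonnegative and not all zero, and the argument is positive), the sign of $H$ at a zero of $G$ is exactly the opposite of the sign of $F$ there. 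Because $F \ll G$, the zeros of $F$ and $G$ interlace, so $F$ alternates sign along the increasing sequence of zeros of $G$; hence $H$ also alternates sign along that sequence. This forces $H$ to have a zero strictly between each consecutive pair of zeros of $G$, and moreover (using the assumption $H(1)>0$ together with knowledge of signs of $F$, $G$ near $1$) one pins down a zero on each side of $1$ in the right relative position, so that the zeros of $H$ interlace with those of $G$ in the pattern required for $G \ll H$.

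The second ingredient is a degree count. I would compute $d_\pm(H)$ directly from $d_\pm(H) \le \max\{d_\pm(G), d_\pm((ax+b+cx^{-1})F)\}$ using \eqref{eq:dpm}, where $d_+\big((ax+b+cx^{-1})F\big) = B_F + \ind_{a\neq 0} - \ind_{a=b=0}$ and $d_-\big((ax+b+cx^{-1})F\big) = A_F + \ind_{c\neq 0} - \ind_{b=c=0}$ — the correction terms account for whether multiplication by $ax+b+cx^{-1}$ genuinely raises the top or bottom degree, and whether the factor is a monomial so that no degree is added on one side. This gives the stated upper bounds $A_H \le \max\{A_G, A_F+\ind_{c\neq0}-\ind_{b=c=0}\}$ and $B_H \le \max\{B_G, B_F+\ind_{a\neq0}-\ind_{a=b=0}\}$. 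Combined with \eqref{eq:d0}, $d_0(H) \le d_+(H)+d_-(H)$, and the sign-change argument produces at least that many positive zeros of $H$ (one strictly between each consecutive pair of the $d_+(G)+d_-(G)$-plus-a-few zeros of $G$, with the count working out), we get equality $d_0(H)=d_+(H)+d_-(H)$ and equality in the degree bounds. Equality in $d_0(H)=d_+(H)+d_-(H)$ together with $H(1)>0$ is precisely the criterion stated in the excerpt for membership in $\mathscr{P}$, so $H\in\mathscr{P}$; and the located zeros give the interlacing $G\ll H$.

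The main obstacle I expect is the careful bookkeeping at the two ``boundary'' cases of the interlacing pattern near $0$ and near $+\infty$, and near the point $1$: one has to verify that the new zero of $H$ closest to $1$ lands on the correct side and that the outermost zeros of $H$ (the largest positive one and the smallest positive one) are positioned so that $G\ll H$ rather than $H\ll G$ — this is where the asymmetry of the interlacing relation bites, and where the inequality $H(1)>0$ is used in an essential way rather than cosmetically. A secondary delicate point is handling the degenerate subcases where $a=b=0$ (so the multiplier is $cx^{-1}$, a pure negative-power monomial), $b=c=0$ (multiplier $ax$), or where $A_F, B_F$ or $A_G, B_G$ vanish, since then some products are empty and several of the indicator corrections interact; these should be checked separately but are routine once the generic case is in hand.
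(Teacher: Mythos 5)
Your approach matches the paper's: evaluate $H$ at the zeros of $G$ to get alternating signs, apply the intermediate value theorem to produce interlacing zeros of $H$, use $H(1)>0$ to fix which side of $1$ the innermost zeros sit on, bound $d_\pm(H)$ via \eqref{eq:dpm}, and close the loop with \eqref{eq:d0}. However, there is a genuine gap in the sentence ``one strictly between each consecutive pair of the $d_+(G)+d_-(G)$-plus-a-few zeros of $G$, with the count working out.'' The IVT argument between consecutive zeros of $G$ (together with $H(1)>0$) yields exactly $A_G + B_G$ zeros of $H$, and no more. But the target count $A_H + B_H$ can exceed $A_G + B_G$: when, say, $c\neq 0$ and $A_F = A_G$, one has $A_H = A_F + 1 = A_G + 1$, and the extra zero cannot be found between consecutive zeros of $G$. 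The paper supplies the missing step (its Step 3): compute $\lim_{x\to 0^+} x^{A_G+1}H(x)$, observe its sign is $(-1)^{A_G+1}$, compare with the sign $(-1)^{A_G}$ of $H$ at $\alpha'_{A_G}$, and conclude there is an additional zero in $(0,\alpha'_{A_G})$; a symmetric argument applied to $H(1/x)$ handles the interval $(\beta'_{B_G},\infty)$. Without this, the inequality $d_0(H)\geq A_H+B_H$ is not established, and the squeeze argument that proves $H\in\mathscr{P}$ does not close.

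You also slightly misplace where the difficulty lies. You flag ``verifying that the outermost zeros of $H$ are positioned so that $G\ll H$ rather than $H\ll G$'' as the main obstacle, but the direction of the interlacing comes for free from the sign pattern $(-1)^l H(\alpha'_l)>0$ together with $H(1)>0$ and the IVT: the zero $\alpha''_l$ produced between $\alpha'_l$ and $\alpha'_{l-1}$ (or $1$) automatically satisfies $\alpha'_l < \alpha''_l$, which is exactly the relative position that $G\ll H$ requires. The real subtlety is the extra zero described above, and its existence is a degree-counting fact that needs the boundary asymptotics, not mere bookkeeping.
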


\begin{proof}
    Since $F \ll G$, there are some positive constants $\lambda, \lambda'$ and interlacing sequences $(\alpha_k), (\beta_k), (\alpha_k'), (\beta_k')$ such that
    \formula{
        H(x) = \lambda' \prod_{k=1}^{A'} \left( \frac{1}{\alpha'_{k}} - \frac{1}{x} \right) \prod_{k=1}^{B'} (\beta'_{k} - x) - (ax + b + cx^{-1}) \, \lambda \prod_{k=1}^{A} \left( \frac{1}{\alpha_{k}} - \frac{1}{x} \right) \prod_{k=1}^{B} (\beta_{k} - x),
    }
    where $A = A_F, B=B_F$ and $A'=A_G, B'=B_G$. We divide the proof into four steps.

    \textit{Step 1.} Observe that for $l = 1,2,\ldots, A'$ we have
    \formula{
        H(\alpha'_{l}) = - (a\alpha'_{l} + b + c(\alpha'_{l})^{-1})\, \lambda \prod_{k=1}^{A} \left( \frac{1}{\alpha_{k}} - \frac{1}{\alpha'_{l}} \right) \prod_{k=1}^{B} (\beta_{k} - \alpha'_{l}).
    }
    Recall that sequences $(\alpha_k), (\beta_k), (\alpha_k'), (\beta_k')$ satisfy the interlacing condition \eqref{eq:interlacing-condition}. This implies that for all $k,l$ we have
    \formula{
        (a\alpha'_{l} + b + c(\alpha'_{l})^{-1}) \prod_{k=1}^{B} (\beta_{k} - \alpha'_{l}) > 0.
    }
    Furthermore, the expression
    \formula{
        \prod_{k=1}^{A} \left( \frac{1}{\alpha_{k}} - \frac{1}{\alpha'_{l}} \right) = \prod_{k=1}^{l-1} \left( \frac{1}{\alpha_{k}} - \frac{1}{\alpha'_{l}} \right) \prod_{k=l}^{A} \left( \frac{1}{\alpha_{k}} - \frac{1}{\alpha'_{l}} \right)
    }
    has the same sign as $(-1)^{l-1}$. These two facts yield
    \formula{
        &(-1)^l H(\alpha'_{l}) > 0 & \text{for } l = 1,2,\ldots, A'.&
    }
    Using analogous arguments, we deduce that
    \formula{
        &(-1)^l H(\beta'_{l}) > 0 & \text{for } l = 1,2,\ldots, B'. &
    }
    By the intermediate value theorem and the assumption that $H(1)>0$, $H$ has at least $A'+B'$ zeros, which we denote by $(\alpha_k'')$ and $(\beta_k'')$, satisfying the following interlacing condition:
    \formula{
        0<\alpha'_{A'} < \alpha''_{A'} < \ldots < \alpha'_{1}<\alpha''_{1}<1 < \beta''_{1} < \beta'_{1} < \ldots < \beta''_{B'} < \beta'_{B'}.
    }

    \textit{Step 2.} Let us now inspect the upper and lower degrees of $H$. We first find the degrees of its components, that is
    \formula{
        & d_-(G) = A', & d_-((ax+b+cx^{-1}) F) = A + \ind_{c \neq 0} - \ind_{b=c=0}, &\\
        & d_+(G) = B', & d_+((ax+b+cx^{-1}) F) = B + \ind_{a \neq 0} - \ind_{a =b=0}. &
    }
    By \eqref{eq:dpm} and the above, we obtain the upper bounds on the degrees of $H$:
    \formula{
        d_-(H) &\leq \max\{ A', A + \ind_{c \neq 0} - \ind_{b=c=0} \}, \\
        d_+(H) &\leq \max\{ B', B + \ind_{a \neq 0} - \ind_{a =b=0} \}.
    }

    \textit{Step 3.} Suppose that $A' < A + \ind_{c \neq 0} - \ind_{b=c=0}$. Since $A \leq A'$, we have $A' = A$ and $c \neq 0$. Moreover,
    \formula{
        x^{A'+1} H(x) &= \lambda' x \prod_{k=1}^{A'} \left( \frac{x}{\alpha'_{k}} - 1 \right) \prod_{k=1}^{B'} (\beta'_{k} - x) \\ &- (ax^2 + bx + c) \, \lambda \prod_{k=1}^{A'} \left( \frac{x}{\alpha_{k}} - 1 \right) \prod_{k=1}^{B} (\beta_{k} - x).
    }
    Hence, there is $\lambda'' >0$ such that
    \formula{
        \lim_{x \to 0^+} x^{A'+1} H(x) &= (-1)^{A'+1} \lambda''.
    }
    It follows that there exists an additional zero of $H$ which belongs to $(0, \alpha'_{A'})$. We denote it by $\alpha''_{A'+1}$. Therefore, in every case, $H$ has at least $A'' = \max \{A', A + \ind_{c \neq 0} - \ind_{b=c=0}\}$ zeros in the interval $(0, 1)$.

    By the above argument applied to $\smash{\tilde{H}(x)} = H(1/x)$, an analogous claim also holds true: $H$ has at least $B'' = \max\{ B', B + \ind_{a \neq 0} - \ind_{a =b=0} \}$ zeros in the interval $(1,\infty)$.

    \textit{Step 4.} Finally, we can use \eqref{eq:dpm}, \eqref{eq:d0} and the previous step to obtain
    \formula{
        A'' + B'' \leq d_0(H) \leq d_-(H) + d_+(H) \leq A'' + B''.
    }
    Hence, $H$ has exactly $A''$ zeros in $(0,1)$ and exactly $B''$ zeros in $(1, \infty)$. Moreover, we have $d_-(H) = A''$ and $d_+(H) = B''$, which proves that $H \in \mathscr{P}$, and we have already seen that $G \ll H$
\end{proof}

\begin{lemma}\label{thm:w2-closure}
    If functions $F_1, F_2, \ldots, F_n \in \mathscr{Q}$ and nonnegative sequences $(a_k)$, $(b_k)$, $(c_k)$ satisfy
    \formula{
    & \sum_{k=1}^n (a_k + b_k + c_k)>0 & & and  & \sum_{k=1}^n (a_k+b_k+c_k) F_k(1) < 1,&
    }
    then the function defined as
    \formula[eq:H]{
    \frac{1}{1-\sum_{k=1}^n(a_kx+b_k+c_kx^{-1}) F_k(x)}
    }
    is a $\mathscr{Q}$ class function.
\end{lemma}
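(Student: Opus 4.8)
The plan is to write the function in \eqref{eq:H} as a single rational function $G/H$ and to verify that $G \ll H$, which by the remark following the definition of $\mathscr{Q}$ is equivalent to membership in $\mathscr{Q}$. First I would clear denominators: writing each $F_k = G_k/H_k$ with $G_k \ll H_k$ (so $G_k, H_k \in \mathscr{P}$), a common denominator $H$ of the finitely many $H_k$ is a product of linear factors $\tfrac{1}{\alpha}-\tfrac1x$ and $\beta - x$ with the $\alpha$'s in $(0,1)$ and the $\beta$'s in $(1,\infty)$, hence $H \in \mathscr{P}$, and the function in \eqref{eq:H} becomes
\formula{
    \frac{H(x)}{H(x) - \sum_{k=1}^n (a_k x + b_k + c_k x^{-1})\, \tfrac{H(x)}{H_k(x)}\, G_k(x)}.
}
So the natural candidates are the numerator $H$ and the denominator $G := H - \sum_k (a_k x + b_k + c_k x^{-1}) (H/H_k) G_k$.

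The key step is to show $G \in \mathscr{P}$ and $H \ll G$. The idea is to build $G$ up one summand at a time and apply Lemma~\ref{thm:interlacing-w1} repeatedly. For a single term this is almost exactly Lemma~\ref{thm:interlacing-w1}: if $F \ll G$ and $H(1) > 0$, then $G - (ax+b+cx^{-1})F \in \mathscr{P}$ and $G \ll H$. To iterate, I would need a lemma saying that $\ll$ behaves well under taking such differences \emph{simultaneously} for several terms that all interlace the same function — that is, a version in which $G - \sum_k (a_k x + b_k + c_k x^{-1}) F_k$ is handled when each $F_k \ll G$. I expect this to follow by essentially the same sign-change argument as in the proof of Lemma~\ref{thm:interlacing-w1}: evaluating $G - \sum_k(\cdots)F_k$ at the zeros $\alpha'_l, \beta'_l$ of $G$ kills the $G$ term, and each summand $(a_k\alpha'_l + b_k + c_k(\alpha'_l)^{-1}) F_k(\alpha'_l)$ has the \emph{same} sign $(-1)^l$ because $F_k \ll G$ for every $k$; since the coefficients are nonnegative and not all zero, the whole sum has sign $(-1)^l$, so the sign-change/intermediate-value argument goes through verbatim, and the degree bookkeeping via \eqref{eq:dpm}–\eqref{eq:d0} is the same as before. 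The reason every $(H/H_k)G_k$ interlaces $H$ is that $G_k \ll H_k$ forces $H/H_k \cdot G_k$ to have zeros interlacing those of $H$ — the extra factors $H/H_k$ contribute zeros that are themselves among the poles being matched; this is the point that needs a short separate argument (or an auxiliary lemma about products preserving $\ll$).

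Finally, the hypothesis $\sum_k (a_k+b_k+c_k) > 0$ guarantees the subtracted sum is not identically zero (so the degree of $G$ genuinely exceeds or matches that of $H$ in the right way), and the hypothesis $\sum_k (a_k+b_k+c_k) F_k(1) < 1$ is exactly $G(1) = H(1)(1 - \sum_k(a_k+b_k+c_k)F_k(1)) > 0$, which is the positivity assumption ``$H(1) > 0$'' needed to invoke Lemma~\ref{thm:interlacing-w1}. Once $G \in \mathscr{P}$ and $H \ll G$, the function in \eqref{eq:H} equals $H/G$ with $H \ll G$, hence lies in $\mathscr{Q}$.

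The main obstacle, I expect, is the interlacing claim for the products $(H/H_k) G_k$ relative to $H$: one must check carefully that multiplying $G_k$ by the ``complementary'' factor $H/H_k$ does not destroy the interlacing pattern with $H$, and that the combined sequences of zeros still satisfy the interlacing condition \eqref{eq:interlacing-condition} with the right off-by-one degree constraints $A \le A' \le A+1$, $B \le B' \le B+1$. Handling several $F_k$ with different denominators $H_k$ simultaneously — rather than a single one as in Lemma~\ref{thm:interlacing-w1} — is where the bookkeeping is most delicate, though no genuinely new idea beyond the sign-change argument of Lemma~\ref{thm:interlacing-w1} should be required.
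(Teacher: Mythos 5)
Your overall strategy — rewrite the expression as $H/G$ with $H$ a common denominator of the $H_k$'s and then argue $H \ll G$ via a sign-change argument at the zeros of $H$ — is a genuinely different route from the paper. The paper instead first decomposes each $(a_kx+b_k+c_kx^{-1})F_k(x)$ into partial fractions of the form $\sum_j \gamma_j/(x-\alpha'_j) - \sum_j \delta_j/(x-\beta'_j) + \nu/x + \mu x + \eta$ with $\gamma_j,\delta_j>0$ and $\mu,\nu\geq 0$, observes that the full sum $\sum_k(a_kx+b_k+c_kx^{-1})F_k(x)$ inherits this same canonical form (residues at a common pole simply add, preserving positivity), and then reads off the interlacing of zeros and poles of $M=1-\sum_k(\cdots)F_k$ from the sign pattern of $M$ between consecutive poles and at $0$, $1$, $\infty$. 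The point of the paper's decomposition is precisely to avoid the bookkeeping you flag as the "main obstacle": once in partial-fraction form with positive coefficients, contributions from different $F_k$'s aggregate additively, and there is never any question of whether zero sets coming from different $F_k$'s interlace each other.

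That obstacle is in fact a genuine gap, and larger than you anticipate. You want to invoke Lemma~\ref{thm:interlacing-w1} (or a multi-term analogue) for the pairs $(H/H_k)G_k$ and $H$, but the relation $\ll$ requires the zeros to strictly alternate, and $(H/H_k)G_k$ and $H$ always share the zeros of $H/H_k$ (since $H/H_k$ divides both). So for $n\geq 2$ the premise $(H/H_k)G_k \ll H$ is false as stated, and Lemma~\ref{thm:interlacing-w1} cannot be applied. A direct sign-change argument might still be salvageable — at a zero $\alpha'_l$ of $H$ coming from $H_j$, the $k\neq j$ summands vanish and the $k=j$ term equals $-(a_j\alpha'_l+b_j+c_j(\alpha'_l)^{-1})(H/H_j)(\alpha'_l)G_j(\alpha'_l)$, whose sign one must track via the interlacing $G_j\ll H_j$ together with the position of $\alpha'_l$ among the zeros of $H/H_j$ — but this amounts to reproving Lemma~\ref{thm:interlacing-w1} from scratch, not invoking it. You also do not address what happens when distinct $H_k$'s share a pole: taking $H=\prod_k H_k$ introduces repeated zeros (disallowed in $\mathscr{P}$), while taking the LCM means several $k$'s survive at a common zero and one must check that they contribute with coherent signs. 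Finally, your hypothesis only gives $\sum_k(a_k+b_k+c_k)>0$; if $a_j=b_j=c_j=0$ for some $j$, the sole surviving term at a zero of $H_j$ vanishes and $G(\alpha'_l)=0$, breaking strict interlacing, so you must explicitly discard such $F_j$'s before the argument starts. None of these are necessarily fatal, but together they make this route considerably longer and more delicate than the paper's.
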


\begin{proof}
    The proof is again divided into four steps.

    \textit{Step 1.} Given $a,b,c \geq 0$ and $F \in \mathscr{Q}$, denote $W(x) = (ax+b+cx^{-1})F(x)$. We will decompose $W$ into partial fractions. Since $F \in \mathscr{Q}$, there are $\lambda>0$ and interlacing sequences  $(\alpha_k), (\beta_k), (\alpha_k'), (\beta_k')$ such that
    \formula[]{
        W(x) &= \lambda(ax + b +cx^{-1}) \, \cfrac{\displaystyle{\prod_{k=1}^{A} \left( \frac{1}{\alpha_{k}} - \frac{1}{x} \right) \prod_{k=1}^{B} (\beta_{k} - x)}}{\displaystyle{\prod_{k=1}^{A'} \left( \frac{1}{\alpha'_{k}} - \frac{1}{x} \right) \prod_{k=1}^{B'} (\beta'_{k} - x)}} \\
        &= \lambda (ax + b +cx^{-1})x^{A' - A} \, \cfrac{\displaystyle{\prod_{k=1}^{A} \left( x-\alpha_{k} \right) \prod_{k=1}^{B} (\beta_{k} - x)}}{\displaystyle{\prod_{k=1}^{A'} \left( x-\alpha'_{k} \right) \prod_{k=1}^{B'} (\beta'_{k} - x)}}. \label{eq:W}
    }
    Replacing $a,b,c$ with $a/\lambda, b/\lambda, c/\lambda$, we may further assume that $\lambda =1$.

    Now we calculate the coefficients of the partial fractions decomposition. For some $\lambda_j', \lambda_j''>0$ and $j=1,2,\ldots,A'$ we have
    \formula{
        \gamma_{j} = \lim_{x \to \alpha'_{j}} (x-\alpha'_{j}) \, W(x) = \lambda'_j \, \frac{\displaystyle{\prod_{k=1}^{j-1} (\alpha'_{j}-\alpha_{k}) \prod_{k=j}^{A} (\alpha'_{j}-\alpha_{k})}}{\displaystyle{\prod_{k=1}^{j-1} (\alpha'_{j} - \alpha'_{k}) \prod_{k=j+1}^{A'}(\alpha'_{j} - \alpha'_{k})}} = \lambda''_j \, \frac{(-1)^{j-1}}{(-1)^{j-1}} > 0.
    }
    Above, we used inequalities $\alpha'_j < 1<\beta_k$ and $\alpha_j' < 1<\beta'_k$, and the assumption that sequences $(\alpha_k), (\alpha'_{k})$ interlace. By similar arguments, for some other constants $\lambda'_j, \lambda''_j>0$ and $j=1,2,\ldots,B'$ we get
    \formula{
        \delta_{j} = \lim_{x \to \beta'_{j}} (\beta'_{j}-x) \, W(x) = \lambda'_j \, \frac{\displaystyle{\prod_{k=1}^{j-1} (\beta_{k}-\beta'_{j}) \prod_{k=j}^{B} (\beta_{k}-\beta'_{j})}}{\displaystyle{\prod_{k=1}^{j-1} (\beta'_{k} - \beta'_{j}) \prod_{k=j+1}^{B'}(\beta'_{k} - \beta'_{j})}} = \lambda''_j \, \frac{(-1)^{j-1}}{(-1)^{j-1}} > 0.
    }
    Hence, we have the following partial fraction decomposition:
    \formula[eq:W-decomposition]{
        W(x) = \sum_{k=1}^{A'} \frac{\gamma_{k}}{x-\alpha'_{k}} - \sum_{k=1}^{B'} \frac{\delta_{k}}{x-\beta'_{k}} + \frac{\nu}{x} + R(x),
    }
    where $\gamma_j, \delta_j>0$. Here $\nu$ is a real constant and $R$ is a polynomial of degree at most one.

    \textit{Step 2.} By \eqref{eq:W}, $W$ has a simple pole at $x=0$ if and only if $c \neq 0$ and $A = A'$. If there is a pole at $x=0$, then
    \formula{
        \nu = \lim_{x \to 0} x W(x) = \lim_{x \to 0} (ax^2 + bx + c) \, \cfrac{\displaystyle{\prod_{k=1}^{A'} \left( x-\alpha_{k} \right) \prod_{k=1}^{B} (\beta_{k} - x)}}{\displaystyle{\prod_{k=1}^{A'} \left( x-\alpha'_{k} \right) \prod_{k=1}^{B'} (\beta'_{k} - x)}} > 0.
    }
    If there is no pole at $x=0$, then $\nu = 0$. By repeating this argument for $\tilde{W}(x) = W(1/x)$, we assert that the limit
    \formula{
    \mu = \lim_{x \to \infty} \frac{1}{x} W(x) 
    }
    is positive, provided that $B = B'$ and $a \neq 0$. Otherwise, $\mu = 0$.

    Hence, \eqref{eq:W-decomposition} can be rewritten as
    \formula[eq:W-decomposition-2]{
        W(x) = \sum_{k=1}^{A'} \frac{\gamma_{k}}{x-\alpha'_{k}} - \sum_{k=1}^{B'} \frac{\delta_{k}}{x-\beta'_{k}} + \frac{\nu}{x} + \mu x + \eta,
    }
    where $\gamma_k, \delta_k > 0$, $\mu, \nu \geq 0$ and $\eta \in \R$.

    \textit{Step 3.} By the previous step, we deduce that the function $\sum_{k=1}^n (a_k x + b_k + c_k x^{-1}) F_k(x)$, where $(a_k)$, $(b_k)$, $(c_k)$ and $(F_k)$ satisfy the assumptions of the lemma, can be represented as the right hand side of \eqref{eq:W-decomposition-2}. In other words, there are (maybe different from these in the previous step) constants $\gamma_{k}, \delta_{k}~>~0$, $\mu,~\nu~\geq~0$, $\eta~\in~\R$, $A,B \in \mathbb{N}$ and sequences $(\alpha_k), (\beta_k)$ satisfying
    \formula{
    0 < \alpha_A < \ldots < \alpha_2 < \alpha_1 < 1 < \beta_1 < \beta_2 < \ldots < \beta_B,
    }
    such that the denominator of \eqref{eq:H} is equal to
    \formula{
        M(x) = 1-\sum_{k=1}^{A} \frac{\gamma_{k}}{x-\alpha_{k}} + \sum_{k=1}^{B} \frac{\delta_{k}}{x-\beta_{k}} - \frac{\nu}{x} - \mu x - \eta.
    }
    Clearly, the zeros and the poles of the above function are respectively the poles and the zeros of $H$. Thus, it remains to prove that the zeros and the poles of $M$ interlace in a proper way, and the above expression can be represented as in the definition of the class $\mathscr{Q}$.

    \textit{Step 4.} Note that
     \formula{
        M(x) = \frac{P(x)}{x^{\ind_{\nu \neq 0}} \displaystyle{\prod_{k=1}^{A} (x-\alpha_{k}) \prod_{k=1}^{B} (\beta_{k}-x)}},
     }
     where $P$ is a polynomial of degree at most $A+B+\ind_{\nu \neq 0}+\ind_{\mu \neq 0}$. We also have
    \formula{
        & \lim_{x \to \alpha_{k}^\pm} M(x) = \mp \infty, & \lim_{x \to \beta_{k}^\pm} M(x) = \pm \infty&
    }
    and
    \formula{
        & \lim_{x \to \pm \infty}M(x) = \mp \infty \text{ if } \mu \neq 0, &  \lim_{x \to 0^\pm} M(x) = \mp \infty \text{ if } \nu \neq 0.
    }
    Furthermore, the inequality $(a+b+c) \, F(1) + (a'+b'+c') \, G(1) < 1$ implies that $M(1)>0$. By the above properties and the intermediate value theorem, polynomial $P$ has at least $A+B$ zeros, which we denote as $(\alpha_k')$ and $(\beta_k')$, such that
    \formula{
        0<\alpha_A < \alpha'_A < \ldots < \alpha_1 < \alpha'_1 <1< \beta'_1 < \beta_1 < \ldots < \beta'_B < \beta_B.
    }
    
    Observe that if $\mu \neq 0$, then $P$ has at least one additional zero $\alpha'_{A+1} \in (0, \alpha_A)$. If, on the other hand, $\nu \neq 0$, then $P$ has at least one additional zero $\beta'_{B+1} \in (\beta_B, \infty)$. In every case, one can find  at least $A + B +\ind_{\nu \neq 0} + \ind_{\mu \neq 0}$ zeros of $P$, so it has exactly that many zeros. Hence, there are some $\lambda, \lambda' >0$ such that $M$ can be represented as
    \formula{
        M(x) = \lambda \, \frac{\displaystyle{\prod_{k=1}^{A+\ind_{\nu \neq 0}} (x-\alpha'_k) \prod_{k=1}^{B+\ind_{\mu \neq 0}} (\beta'_k-x)}}{x^{\ind_{\nu \neq 0}} \displaystyle{\prod_{k=1}^{A} (x-\alpha_{k}) \prod_{k=1}^{B} (\beta_{k}-x)}} =\lambda' \, \frac{\displaystyle{\prod_{k=1}^{A+\ind_{\nu \neq 0}} \left(\frac{1}{\alpha'_{k}}-\frac{1}{x}\right) \prod_{k=1}^{B+\ind_{\mu \neq 0}} (\beta'_{k}-x)}}{\displaystyle{\prod_{k=1}^{A} \left(\frac{1}{\alpha_{k}}-\frac{1}{x} \right) \prod_{k=1}^{B} (\beta_{k}-x)}}.
    }
    Thus, $H = 1/M \in \mathscr{Q}$, as claimed.
\end{proof}

Finally, we prove two additional results that connect rational functions from classes $\mathscr{P}$ and $\mathscr{Q}$ with generating functions of sequences discussed in the previous section.

\begin{lemma}\label{thm:w1-pf}
    If $F \in \mathscr{P}$, then $1/F$ is the generating function of a summable Pólya frequency sequence.
\end{lemma}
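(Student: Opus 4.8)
The plan is to make the product representation of $F$ explicit, invert it, and recognise each resulting factor as the generating function of a geometric sequence or the mirror image of one. Since summable Pólya frequency sequences are closed under convolution and multiplication by a positive constant (equivalently, one matches directly to the form in Theorem~\ref{thm:pf-generating-function}), this will suffice.

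First I would use that $F \in \mathscr{P}$ means
\[
F(x) = \lambda \prod_{k=1}^{A} \left( \frac{1}{\alpha_{k}} - \frac{1}{x} \right) \prod_{k=1}^{B} (\beta_{k} - x),
\]
with $\lambda > 0$ and $0 < \alpha_A < \dots < \alpha_1 < 1 < \beta_1 < \dots < \beta_B$, and rewrite the inverted factors as
\[
\frac{1}{\tfrac{1}{\alpha_k} - \tfrac1x} = \frac{\alpha_k x}{x-\alpha_k} = \alpha_k \cdot \frac{1}{1 - \alpha_k/x}, \qquad \frac{1}{\beta_k - x} = \frac{1}{\beta_k} \cdot \frac{1}{1 - x/\beta_k}.
\]
Multiplying over $k$ gives
\[
\frac{1}{F(x)} = C \prod_{k=1}^{A} \frac{1}{1 - \alpha_k/x} \prod_{k=1}^{B} \frac{1}{1 - x/\beta_k}, \qquad C = \frac{1}{\lambda}\prod_{k=1}^{A}\alpha_k \prod_{k=1}^B \frac{1}{\beta_k} > 0.
\]
Next I would expand each factor geometrically: $\tfrac{1}{1-\alpha_k/x} = \sum_{j\ge0}\alpha_k^j x^{-j}$ converges for $|x|>\alpha_k$, and $\tfrac{1}{1-x/\beta_k} = \sum_{j\ge0}\beta_k^{-j}x^j$ converges for $|x|<\beta_k$, so since $\alpha_k<1<\beta_k$ both series converge absolutely on $|x|=1$. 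Hence on the unit circle the product is an absolutely convergent Laurent series $\sum_k a(k)x^k$ equal to $1/F(x)$, and $a$ is $C$ times the convolution of finitely many geometric sequences (ratios $1/\beta_k \in (0,1)$) and mirror images of geometric sequences (ratios $\alpha_k \in (0,1)$). All of these are summable Pólya frequency sequences, the class is closed under convolution and positive scaling, so $a$ is a summable Pólya frequency sequence with generating function $1/F$. Equivalently, the displayed product already has the shape in Theorem~\ref{thm:pf-generating-function}, with $m=0$, $c^+=c^-=0$, $e^{c}=C$, all $p^\pm_k=0$, $q^-_k=\alpha_k$ for $k\le A$, $q^+_k=1/\beta_k$ for $k\le B$, and the remaining $q^\pm_k=0$.

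I do not expect a genuine obstacle here. The only points requiring care are the bookkeeping of which inverted factor contributes a $q^+$- versus a $q^-$-type term, the strict inequalities $\alpha_k<1$ and $1/\beta_k<1$ (immediate from the definition of $\mathscr{P}$) that keep all geometric ratios in $(0,1)$, and the remark that the annulus $\alpha_1<|x|<\beta_1$ contains $|x|=1$, so that $1/F$ genuinely is a generating function there. The degenerate cases $A=0$ or $B=0$, including $F$ constant, are covered by the empty-product convention.
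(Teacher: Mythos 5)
Your proof is correct and follows essentially the same route as the paper: rewrite the $\mathscr{P}$-product as $\lambda'\prod(1-\alpha_k/x)\prod(1-x/\beta_k)$ with $\alpha_k, 1/\beta_k\in(0,1)$, invert, and match directly against the representation in Theorem~\ref{thm:pf-generating-function}. The extra detail you supply (the explicit constant $C$, convergence of the geometric expansions on $|x|=1$, and the empty-product remark for $A=0$ or $B=0$) is sound but not different in substance.
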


\begin{proof}
    Let $F \in \mathscr{P}$ and observe that
    \formula[eq:w1-pf]{
    F(x) &= \lambda \prod_{k=1}^{A} \left( \frac{1}{\alpha_{k}} - \frac{1}{x} \right) \prod_{k=1}^{B} (\beta_{k} - x) =\lambda' \prod_{k=1}^{A} \left( 1 - \frac{\alpha_k}{x} \right) \prod_{k=1}^{B} \left(1 - \frac{x}{\beta_k}\right)
    }
    where $\lambda'$ is some positive constant. Moreover, $\alpha_j \in (0,1)$ for $j=1,2,\ldots, A$, and since $\beta_j \in (1, \infty)$, we have $1/\beta_j \in (0,1)$ for $j=1,2,\ldots,B$. Thus, the claim is a direct consequence of Theorem~\ref{thm:pf-generating-function}.
\end{proof}

Moreover, by \eqref{eq:w1-pf}, one can see that $F \in \mathscr{P}$ is, up to a multiplication by a constant, the probability generating function of the random variable $\sum_{j=1}^A X_j$ + $\sum_{j=1}^B Y_j$, where $-X_j$ and $Y_j$ are independent, geometrically distributed random variables with parameters $\alpha_j$ and $1/\beta_j$ respectively.

\begin{lemma}\label{thm:amcm-pf}
    If $F \in \mathscr{Q}$, then $F$ is the generating function of an $\amcm$ sequence that converges to zero.
\end{lemma}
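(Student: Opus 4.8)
The plan is to establish the claim via the integral representation in Theorem~\ref{thm:amcm-generating-function}, producing the representing measures $\mu_+$ and $\mu_-$ directly from the partial fraction decomposition of $F \in \mathscr{Q}$. First I would take $F = G/H$ with $G \ll H$, so that $F$ has interlacing zeros $(\alpha_k), (\beta_k)$ and poles $(\alpha_k'), (\beta_k')$ obeying \eqref{eq:interlacing-condition}; since the interlacing condition forces $A' \geq A$ and $B' \geq B$ (and at most one more), the rational function $F$ is proper, i.e.\ $d_+(G) \leq d_+(H)$ and $d_-(G) \leq d_-(H)$, so $F(x) \to $ finite limits as $x \to 0$ and $x \to \infty$. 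Hence $F$ decomposes into partial fractions
\formula{
F(x) = c + \sum_{k=1}^{A'} \frac{\gamma_k}{1 - x/\alpha_k'} + \sum_{k=1}^{B'} \frac{\delta_k}{1 - x/\beta_k'} ,
}
where the coefficients $\gamma_k, \delta_k$ are residues. The key computation — and I expect this to be the main point of the argument — is that all $\gamma_k$ and $\delta_k$ are \emph{positive}: this is exactly the sign bookkeeping already carried out in Step~1 of the proof of Lemma~\ref{thm:w2-closure}, where the residue of $(ax+b+cx^{-1})F$ at a pole $\alpha_j'$ was shown to be positive using $\alpha_j' < 1 < \beta_k$, $\alpha_j' < 1 < \beta_k'$, and the interlacing of $(\alpha_k)$ with $(\alpha_k')$ (the alternating sign $(-1)^{j-1}$ in numerator and denominator cancel); the same computation applies verbatim with $a = c = 0$, $b = 1$.

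Next I would rewrite each term in the form appearing in \eqref{eq:amcm-generating-function}. Since $\alpha_k' \in (0,1)$, writing $s_k = \alpha_k'$ gives $\gamma_k/(1 - x/\alpha_k') = \gamma_k/(1 - s_k/x)\cdot(\text{after substituting } x \leftrightarrow 1/x)$ — more precisely, with the convention in Theorem~\ref{thm:amcm-generating-function} that the variable is $x$ with $|x| = 1$, the term $\gamma_k/(1 - x/\alpha_k')$ is $\gamma_k/(1 - (1/\alpha_k') x)$, but $1/\alpha_k' > 1$, so I instead keep the representation in the shape $\gamma_k \alpha_k' / (\alpha_k' - x)$ and match to $\int \frac1{1 - sx}\mu_+(ds)$: since the zeros $\alpha_k' < 1$ of the denominator correspond to $s = \alpha_k' \in [0,1)$ after the appropriate rescaling, I take $\mu_+ = \sum \tilde\gamma_k \delta_{\alpha_k'}$ for suitable positive weights $\tilde\gamma_k$, and for the $\beta_k' > 1$ poles I use $1/\beta_k' \in [0,1)$ and set $\mu_- = \sum \tilde\delta_k \delta_{1/\beta_k'}$. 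The constant $c$ is absorbed into these atomic masses together with the $-a(0)$ term by adjusting the total masses so that \eqref{eq:a(0)} holds; concretely $a(0)$ is the coefficient of $x^0$, which one reads off as $c + \sum \gamma_k + \sum \delta_k$ on one side and matches $\int \mu_+ = \int \mu_-$ by splitting $c$ appropriately (this uses $c \geq 0$, which follows since $c = \lim_{x\to 0^+}F(x) \geq 0$ and $\lim_{x\to\infty} F(x)/1$ considerations, both nonnegative as $F$ is a ratio of members of $\mathscr{P}$, hence positive on the relevant rays).

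Finally, the convergence to zero at $\pm\infty$: an $\amcm$ sequence whose generating function is a finite sum of terms $\gamma_k/(1 - s_k x)$ with $s_k \in (0,1)$ corresponds to a finite linear combination of genuine (decaying) geometric sequences, so its two tails converge to $0$ — no atom at $s = 1$ appears because every $\alpha_k', \beta_k'$ is strictly separated from $1$ by the interlacing condition \eqref{eq:interlacing-condition}. By Theorem~\ref{thm:amcm-generating-function}, the existence of such $\mu_+, \mu_-$ (here purely atomic with finitely many atoms, all in $[0,1)$) together with \eqref{eq:a(0)} is equivalent to $F$ being the generating function of an $\amcm$ sequence converging to zero, which is the assertion. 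The main obstacle I anticipate is purely bookkeeping: correctly normalizing the atomic weights and the constant $c$ so that the two total masses agree and reproduce $-a(0)$, and being careful that the interlacing hypothesis is genuinely used to guarantee both positivity of all residues and strict separation of all poles from $1$ (the latter ensuring decay rather than a constant tail).
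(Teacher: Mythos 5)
Your proposal follows essentially the same route as the paper: partial-fraction decomposition of $F = G/H$ with $G \ll H$, positivity of all residues read off from the sign computation in Step~1 of the proof of Lemma~\ref{thm:w2-closure} (with $a=c=0$, $b=1$), then packaging the residues and the constant into a pair of finite atomic measures and invoking Theorem~\ref{thm:amcm-generating-function}. The paper streamlines this by first transforming \eqref{eq:amcm-generating-function} into the cleaner equivalent form
\formula*{
F(x) = \int_{(0,1)} \frac{s}{x-s}\,\mu_-(ds) - \int_{(1,\infty)} \frac{s}{x-s}\,\tilde\mu_+(ds) + \tilde\mu_+(\{\infty\}),
}
against which the decomposition $F(x) = \sum \frac{\gamma_k}{x-\alpha_k} - \sum\frac{\delta_k}{x-\beta_k} + \eta$ (with $\gamma_k, \delta_k > 0$) matches term by term with no further algebra, and the mass identity $\int\mu_- = \int\tilde\mu_+$ then drops out from evaluating at $x=0$; you work directly against the original form \eqref{eq:amcm-generating-function}, which is why you correctly anticipate extra bookkeeping. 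One concrete slip worth fixing in that bookkeeping: poles $\alpha_k' \in (0,1)$ correspond to the kernel $\frac{1}{1-s/x}$, i.e.\ to $\mu_-$ with $s=\alpha_k'$, not to $\frac{1}{1-sx}$ and $\mu_+$ as you wrote; poles $\beta_k' > 1$ correspond to $\frac{1}{1-sx}$ with $s=1/\beta_k'$, i.e.\ $\mu_+$. Relatedly, in your chosen parametrization $\frac{\gamma_k}{1 - x/\alpha_k'}$ the coefficients $\gamma_k$ come out \emph{negative} (the residue of $F$ at $\alpha_k'$ is positive, which makes $\gamma_k = -\operatorname{Res}/\alpha_k' < 0$); rewriting as $\frac{\gamma_k}{1 - x/\alpha_k'} = -\frac{\gamma_k}{1 - \alpha_k'/x} + \gamma_k$ restores a positive weight on the $\mu_-$ side but dumps $\gamma_k$ into the constant, which is exactly the normalization headache you flag. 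None of this is a gap in the idea; it is, as you say, bookkeeping, and the paper's intermediate rewriting of the kernel is the device that makes it painless.
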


\begin{proof}
    We divide the argument into three steps.

    \textit{Step 1.} We first rewrite the condition from Theorem \ref{thm:amcm-generating-function} in a more convenient way. The first integral on the right-hand side of \eqref{eq:amcm-generating-function} reads
    \formula{
        \int_{[0,1)} \frac{1}{1-sx} \mu_+(ds) &= \int_{(0,1)} \frac{1/s}{1/s-x} \mu_+(ds) + \mu_+(\{0\}) \\ &= -\int_{(1,\infty)} \frac{s}{x-s} \tilde{\mu}_+(ds) + \tilde{\mu}_+(\{\infty\}),
    }
    where
    \formula{
        \tilde{\mu}_+(A) = \int_{[0,1)} \ind_{A}(1/s) \, \mu(ds)
    }
    for any Borel set $A$. Above we agree that $1/0 = \infty$. The second integral from \eqref{eq:amcm-generating-function} can be transformed as follows:
    \formula{
        \int_{[0,1)} \frac{1}{1-s/x} \mu_-(ds) &= \int_{(0,1)} \frac{x-s+s}{x-s} \mu_-(ds) + \mu_-(\{0\}) \\ &= \int_{(0,1)} \frac{s}{x-s} \mu_-(ds) + \int_{[0,1)} \mu_-(ds). 
    }
    Hence, \eqref{eq:amcm-generating-function} is equivalent to
    \formula[eq:amcm-generating-function-2]{
        F(w) = \int_{(0,1)} \frac{s}{x-s} \mu_-(ds) - \int_{(1,\infty)} \frac{s}{x-s} \tilde{\mu}_+(ds) + \tilde{\mu}_+(\{\infty\}).
    }
    Moreover, \eqref{eq:a(0)} is equivalent to $\int_{[0,1)} \mu_-(ds) = \int_{(1,\infty]} \tilde{\mu}_+(ds)$.

    \textit{Step 2.} From the first two steps of the proof of Lemma \ref{thm:w2-closure}, with $a=c=0$, it follows that every function $F \in \mathscr{Q}$ can be represented as in \eqref{eq:W-decomposition-2}:
    \formula[eq:F-series]{
        F(x) = \sum_{k=1}^{A} \frac{\gamma_{k}}{x-\alpha_{k}} - \sum_{k=1}^{B} \frac{\delta_{k}}{x-\beta_{k}} + \eta,
    }
    where $\gamma_k, \delta_k>0$, $\eta \in \R$ and sequences $(\alpha_k), (\beta_k)$ satisfy
    \formula{
    0 < \alpha_A < \ldots < \alpha_1 < 1 < \beta_1 < \ldots < \beta_B.
    }
    Here we put $\mu=\nu=0$ because $a=c=0$.

    \textit{Step 3.} We define Borel measures on $[0,1)$ and $(1, \infty]$, respectively:
    \formula{
        & \mu_-(ds) = \sum_{k=1}^{A} \frac{\gamma_{k}}{\alpha_{k}} \varepsilon_{\alpha_{k}}(ds) + \zeta \varepsilon_0(ds), & \tilde{\mu}_+(ds) = \sum_{k=1}^{B} \frac{\delta_{k}}{\beta_{k}} \varepsilon_{\beta_{k}}(ds) +\eta \varepsilon_{\infty}(ds), &
    }
    where $\zeta = F(0)$ and by $\varepsilon_x$ we denote the Dirac measure at $x$. This yields the following representation:
    \formula[eq:F-integral]{
        F(x) = \int_{(0,1)} \frac{s}{x-s} \mu_-(ds) - \int_{(1,\infty)} \frac{s}{x-s} \tilde{\mu}_+(ds) + \tilde{\mu}_+(\{\infty\}).
    }
    Additionally, by \eqref{eq:F-series}, we obtain 
    \formula{
    \zeta = F(0) = -\sum_{k=1}^{A} \frac{\gamma_{k}}{\alpha_{k}} + \sum_{k=1}^{B} \frac{\delta_{k}}{\beta_{k}} + \eta.
    }
    Hence,
    \formula[eq:equal-measures]{
    \int_{[0, 1)} \mu_-(ds) = \sum_{k=1}^{A} \frac{\gamma_k}{\alpha_k} + \zeta = \sum_{k=1}^{B} \frac{\delta_k}{\beta_k} + \eta = \int_{(1,
    \infty]} \tilde{\mu}_+ (ds).
    }
    Comparing \eqref{eq:amcm-generating-function-2} with \eqref{eq:F-integral} and \eqref{eq:equal-measures}, we deduce that $F$ is the generating function of an $\amcm$ sequence, as claimed.
\end{proof}

\section{Proof of the main result}\label{sec:proofs}

\subsection{Distributions of the first passage locations} In the present section, we combine the results obtained in the previous sections and use them to prove Theorem \ref{thm:main}.

Recall that the random walk $(X_n, Y_n)$ starts in $(0,y)$ and we study its location when it hits level zero, that is, $X(\tau_0)$. We have already proved that for $0 < y < b$, equation \eqref{eq:F-explicit-simplified} holds true:
\formula[eq:F-formula]{
    F_{y}^{0,b}(w) = \frac{(p_{y,3} w^{-1} + p_{y,4}) F_{y-1}^{0,y}(w)}{1- (p_{y,1} w + p_{y,2}) \, F_{y+1}^{y, b}(w) - (p_{y,3} w^{-1} + p_{y,4}) \, F_{y-1}^{y, 0}(w) },
}
where $\smash{F_y^{0,y+1}}$ and $\smash{F_y^{y+1,0}}$ are described by equations \eqref{eq:system-Fy0y+1} and \eqref{eq:Fyy+10}. We now solve these equations.

\begin{lemma}\label{thm:Fy0y+1-solution}
    For $y \in \N$ we have
    \formula{
        F_y^{0,y+1}(w) = \frac{1}{G_y(w)} \prod_{k=1}^y (p_{k,3} w^{-1} + p_{k,4}),
    }
    where $G_y \in \mathscr{P}$ and
    \formula[eq:Ay+By]{
    & d_-(G_y) =  A_y \leq \lfloor y/2 \rfloor, & d_+(G_y)= B_y \leq \lfloor y/2 \rfloor. &
    }
    Moreover, $G_y \ll G_{y+1}$.
\end{lemma}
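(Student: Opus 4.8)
The plan is to argue by strong induction on $y$, letting the recurrence \eqref{eq:system-Fy0y+1} carry out the algebra and Lemma~\ref{thm:interlacing-w1} supply the structure. Abbreviate $q_k(w) = p_{k,3}w^{-1} + p_{k,4}$; the no-barrier assumption gives $q_k(1) = p_{k,3} + p_{k,4} > 0$, so each $q_k$ is a nonzero Laurent polynomial. For $y = 0$ and $y = 1$ the initial conditions $F_0^{0,1} \equiv 1$ and $F_1^{0,2} = q_1$ force $G_0 \equiv G_1 \equiv 1$; being positive constants these belong to $\mathscr{P}$ with $A_y = B_y = 0 = \lfloor y/2 \rfloor$, and $G_0 \ll G_1$ by the convention on constant functions. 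This disposes of the base cases.

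Now fix $y \geq 2$ and assume the assertion for $y - 1$ and $y - 2$. By the inductive hypothesis $F_{y-1}^{0,y} = G_{y-1}^{-1}\prod_{k=1}^{y-1} q_k$ and $F_{y-2}^{0,y-1} = G_{y-2}^{-1}\prod_{k=1}^{y-2} q_k$ are nonzero rational functions; substituting these into \eqref{eq:system-Fy0y+1} and multiplying through by $\prod_{k=1}^{y} q_k$ gives, after the obvious cancellations,
\[
  G_y := \frac{\prod_{k=1}^{y} q_k}{F_y^{0,y+1}} = G_{y-1}(w) - (p_{y-1,1}w + p_{y-1,2})(p_{y,3}w^{-1} + p_{y,4})\,G_{y-2}(w),
\]
so $G_y$ is a rational function and $F_y^{0,y+1} = G_y^{-1}\prod_{k=1}^{y} q_k$, the claimed product form. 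Expanding $(p_{y-1,1}w + p_{y-1,2})(p_{y,3}w^{-1} + p_{y,4}) = aw + b + cw^{-1}$ yields $a = p_{y-1,1}p_{y,4}$, $b = p_{y-1,1}p_{y,3} + p_{y-1,2}p_{y,4}$, $c = p_{y-1,2}p_{y,3}$, all nonnegative and with $a + b + c = (p_{y-1,1} + p_{y-1,2})(p_{y,3} + p_{y,4}) > 0$, again by the no-barrier assumption. Since $G_{y-2} \ll G_{y-1}$ by the inductive hypothesis, $G_y$ has exactly the shape of the function $H$ in Lemma~\ref{thm:interlacing-w1}.

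It remains to verify the hypothesis $G_y(1) > 0$ of that lemma. By \eqref{eq:F-definition}, $F_y^{0,y+1}(1) = \P^{(0,y)}(\tau_0 < \tau_{y+1})$, which is positive since the walk can descend straight from level $y$ to level $0$ without ever reaching $y + 1$, an event of probability $\prod_{k=1}^{y}(p_{k,3} + p_{k,4}) > 0$; hence $G_y(1) = \prod_{k=1}^{y} q_k(1) / F_y^{0,y+1}(1) > 0$. Lemma~\ref{thm:interlacing-w1} then delivers $G_y \in \mathscr{P}$, $G_{y-1} \ll G_y$, together with
\[
  A_y = \max\{A_{y-1},\, A_{y-2} + \ind_{c \neq 0} - \ind_{b = c = 0}\}, \qquad B_y = \max\{B_{y-1},\, B_{y-2} + \ind_{a \neq 0} - \ind_{a = b = 0}\}.
\]
From $A_{y-1} \leq \lfloor (y-1)/2 \rfloor \leq \lfloor y/2 \rfloor$, $A_{y-2} \leq \lfloor (y-2)/2 \rfloor = \lfloor y/2 \rfloor - 1$, and $\ind_{c \neq 0} - \ind_{b = c = 0} \leq 1$, we conclude $A_y \leq \lfloor y/2 \rfloor$, and symmetrically $B_y \leq \lfloor y/2 \rfloor$. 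This closes the induction.

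I do not expect a genuine obstacle once Lemma~\ref{thm:interlacing-w1} is available: the whole argument is essentially forced by the recurrence. The single point that needs care is that the hypothesis $H(1) > 0$ of that lemma is not an algebraic consequence of anything and has to be imported from probability, via the identity $F_y^{0,y+1}(1) = \P^{(0,y)}(\tau_0 < \tau_{y+1}) > 0$; a minor secondary matter is keeping track of the parity of $y$ in the floor estimates, which is precisely what yields the sharp bound $\lfloor y/2 \rfloor$ rather than something weaker.
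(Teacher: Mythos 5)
Your proof is correct and follows essentially the same route as the paper's: define $G_y$ by the same normalization, rewrite the recurrence \eqref{eq:system-Fy0y+1} as $G_y = G_{y-1} - (aw+b+cw^{-1})G_{y-2}$, import $G_y(1)>0$ from the probabilistic interpretation, and close the induction with Lemma~\ref{thm:interlacing-w1}. Your degree-bound argument is slightly more direct (using $A_{y-2}\leq\lfloor y/2\rfloor-1$ together with $\ind_{c\neq 0}-\ind_{b=c=0}\leq 1$) than the paper's case split on whether $A_{y-1}=A_{y-2}+1$, but the substance is the same.
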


\begin{proof}
    For $y \geq 0$ we define
    \formula{
    G_y(w) = \frac{1}{F_y^{0,y+1}(w)} \prod_{k=1}^y (p_{k,3} w^{-1} + p_{k,4}).
    }
    Rewriting \eqref{eq:system-Fy0y+1} in terms of $G_y$ yields
    \formula[eq:G-recurrence]{
    G_y(w) = G_{y-1}(w) - (p_{y-1,1}w + p_{y-1,2})(p_{y,3}w^{-1} + p_{4,y}) \, G_{y-2}(w)
    }
    with $G_0(w)=G_1(w)=1$. Clearly, $G_0 \ll G_1$. Now we assume that for some $y \geq 0$ we have $G_{y-2} \ll G_{y-1}$. Since $0<p_{y-1,1} + p_{y-1,2}<1$ and $0<p_{y,3}+p_{y,4}<1$, we have
    \formula[eq:abc]{
    (p_{y-1,1}w+p_{y-1,2})(p_{y,3}w^{-1}+p_{y,4}) = aw + b +cw^{-1},
    }
    with $a,b,c \geq 0$, not all equal to zero.
    Moreover, by \eqref{eq:F-definition}, we have
    \formula{
    G_y(1) = \frac{1}{F_y^{y,y+1}(1)} \prod_{k=1}^y (p_{k,3} w^{-1} + p_{k,4}) = \frac{1}{\P^{(0,y)}(\tau_0 < \tau_{y+1})} \prod_{k=1}^y (p_{k,3} w^{-1} + p_{k,4}) > 0.
    }
    The first part of the assertion follows by induction from Lemma \ref{thm:interlacing-w1}.

    Now we turn to the proof of \eqref{eq:Ay+By}. Recall that $G_{y-2} \ll G_{y-1}$ for every $y \geq 2$, and $A_{y-1} \leq A_{y-2}+1$. Assume that $A_{y-1} = A_{y-2}+1$. By Lemma \ref{thm:interlacing-w1}, we have
    \formula{
    A_y &= \max(A_{y-2}+1, A_{y-2} + \ind_{c \neq 0} - \ind_{b=c=0}\} = A_{y-2}+1.
    }
    Since $G_0=G_1=1$ implies that $A_0=A_1=0$, we have $A_y \leq \lfloor y/2 \rfloor$ by induction. An analogous argument applies to $B_y$.
\end{proof}

\begin{lemma}\label{thm:Fyy+10-solution}
    For $y \in \N$ there is a function $H_y \in \mathscr{Q}$ such that
    \formula{
    F_y^{y+1,0}(w) = (p_{y,1}w + p_{y,2}) \, H_y(w).
    }
\end{lemma}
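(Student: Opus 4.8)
The plan is to derive $H_y$ directly from the product formula for $F_y^{0,y+1}$ proved in Lemma~\ref{thm:Fy0y+1-solution} together with the relation \eqref{eq:Fyy+10}. Recall that \eqref{eq:Fyy+10} states
\formula{
    F_y^{y+1,0}(w) = \frac{p_{y,1}w+p_{y,2}}{p_{y,3}w^{-1}+p_{y,4}} \, \frac{F_y^{0,y+1}(w)}{F_{y-1}^{0,y}(w)},
}
so the first step is simply to substitute $F_y^{0,y+1}(w) = G_y(w)^{-1}\prod_{k=1}^y (p_{k,3}w^{-1}+p_{k,4})$ and $F_{y-1}^{0,y}(w) = G_{y-1}(w)^{-1}\prod_{k=1}^{y-1}(p_{k,3}w^{-1}+p_{k,4})$ from Lemma~\ref{thm:Fy0y+1-solution}. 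The factor $p_{y,3}w^{-1}+p_{y,4}$ from the denominator of \eqref{eq:Fyy+10} cancels against the extra $k=y$ term in the telescoping ratio of products, and all the remaining product factors cancel between numerator and denominator. What survives is
\formula{
    F_y^{y+1,0}(w) = (p_{y,1}w+p_{y,2}) \, \frac{G_{y-1}(w)}{G_y(w)}.
}
So the natural candidate is $H_y(w) = G_{y-1}(w)/G_y(w)$, and the whole task reduces to showing $H_y \in \mathscr{Q}$.

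The key point is then to invoke the characterization $F \in \mathscr{Q}$ if and only if $F = G/H$ with $G \ll H$, which was recorded immediately after the definition of $\mathscr{Q}$. By Lemma~\ref{thm:Fy0y+1-solution} we have exactly $G_{y-1} \ll G_y$ (both in $\mathscr{P}$, with interlacing zeros in the sense required), so $G_{y-1}/G_y$ is a $\mathscr{Q}$ class function by definition, and we are done. I would handle the edge case $y=0$ separately: there $F_0^{1,0}(w) = 0$ by \eqref{eq:system-fyy+10} (equivalently \eqref{eq:Fyy+10} is vacuous or one takes $G_{-1}$ suitably), and one can take $H_0 = 0$, or simply note that the statement is read for $y \geq 1$; a brief remark suffices. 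For $y=1$ one has $G_0 = G_1 = 1$, so $H_1 = 1$, a positive constant, which lies in $\mathscr{Q}$ by the convention adopted for constant functions.

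I do not expect a genuine obstacle here, since all the real work was done in Lemma~\ref{thm:Fy0y+1-solution}; the only thing to be careful about is the bookkeeping in the cancellation of the products and making sure the telescoping is written correctly (the denominator of \eqref{eq:Fyy+10} supplies precisely the factor needed to turn $\prod_{k=1}^y/\prod_{k=1}^{y-1}$ into the $k=y$ factor that then cancels). One should also double-check that the interlacing relation $G_{y-1}\ll G_y$ from Lemma~\ref{thm:Fy0y+1-solution} is the one that matches the direction of the quotient in the characterization of $\mathscr{Q}$ (numerator $\ll$ denominator), which it is. Thus the proof is short: substitute, cancel, and quote the $\mathscr{Q}$-characterization together with $G_{y-1}\ll G_y$.
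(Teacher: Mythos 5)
Your proof is correct and follows the same route as the paper: substitute the product representation from Lemma~\ref{thm:Fy0y+1-solution} into \eqref{eq:Fyy+10}, cancel, set $H_y = G_{y-1}/G_y$, and invoke $G_{y-1} \ll G_y$ together with the characterization of $\mathscr{Q}$ as quotients of interlacing $\mathscr{P}$ functions. The extra care you take with the $y=0$ case (where \eqref{eq:Fyy+10} does not apply and $F_0^{1,0}=0$) is a point the paper glosses over, but otherwise the two arguments are identical.
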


\begin{proof}
    Recall formula \eqref{eq:Fyy+10}:
    \formula{
    F_y^{y+1,0}(w) = \frac{p_{y,1}w+p_{y,2}}{p_{y,3}w^{-1}+p_{y,4}} \, \frac{F_y^{0,y+1}(w)}{F_{y-1}^{0,y}(w)}.
    }
    Lemma \ref{thm:Fy0y+1-solution} implies that there are $G_{y-1}, G_y \in \mathscr{P}$ such that $G_{y-1} \ll G_y$, and the above equation can be rewritten as
    \formula{
    F_y^{y+1,0}(w) = \frac{p_{y,1}w+p_{y,2}}{p_{y,3}w^{-1}+p_{y,4}} \, \frac{G_{y-1}(w)}{G_y(w)} \, \frac{\displaystyle{\prod_{k=1}^y (p_{k,3}w^{-1} + p_{k,4})}}{\displaystyle{\prod_{k=1}^{y-1} (p_{k,3}w^{-1} + p_{k,4})}} = (p_{y,1}w+p_{y,2})\, \frac{G_{y-1}(w)}{G_y(w)}.
    }
    It only remains to denote $H_y = G_{y-1}/G_y$ and observe that $H_y$ belongs to $\mathscr{Q}$ as a quotient of interlacing $\mathscr{P}$ functions.
\end{proof}

By the symmetry of the process, the function $\smash{F_{y+1}^{y,b}}$ that also appears in \eqref{eq:F-explicit-simplified} is completely analogous to $\smash{F_y^{y+1,0}}$. In particular, it satisfies an analogous version of Lemma~\ref{thm:Fyy+10-solution}:
\formula[eq:Fyy-1b]{
F_{y}^{y-1,b}(w) = (p_{y,3}w^{-1}+p_{y,4})\, H_{y,b}(w)
}
for some $H_{y,b} \in \mathscr{Q}$.

In the following result, we find the probability generating function of rescaled first passage locations for our random walks.

\begin{lemma}\label{thm:Fy0b-solution}
    For $0\leq y \leq b$, there are functions $G_y \in \mathscr{P}$ and $H_{y,b} \in \mathscr{Q}$ such that $A_G, B_G$ satisfy \eqref{eq:Ay+By}, and
    \formula{
    F_{y}^{0,b}(w) = \frac{H_{y,b}(w)}{G_{y}(w)} \prod_{k=1}^{y} (p_{k,3} w^{-1} + p_{k,4}).
    }
\end{lemma}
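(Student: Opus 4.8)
The idea is to substitute the solutions obtained in Lemmas~\ref{thm:Fy0y+1-solution}, \ref{thm:Fyy+10-solution} and formula~\eqref{eq:Fyy-1b} into the master formula~\eqref{eq:F-explicit-simplified}, and then simplify. First I would recall from Lemma~\ref{thm:Fy0y+1-solution} that
\formula{
F_{y-1}^{0,y}(w) = \frac{1}{G_{y-1}(w)} \prod_{k=1}^{y-1} (p_{k,3} w^{-1} + p_{k,4}),
}
with $G_{y-1} \in \mathscr{P}$ and $G_{y-1} \ll G_y$, where $G_y$ also satisfies the degree bounds~\eqref{eq:Ay+By}. Next, Lemma~\ref{thm:Fyy+10-solution} applied at level $y$ gives $F_y^{y+1,0}(w) = (p_{y,1}w + p_{y,2}) H_y(w)$ with $H_y = G_{y-1}/G_y \in \mathscr{Q}$, and the symmetric statement~\eqref{eq:Fyy-1b} gives $F_{y+1}^{y,b}(w) = (p_{y+1,3}w^{-1} + p_{y+1,4}) H_{y+1,b}(w)$ for some $H_{y+1,b} \in \mathscr{Q}$.

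\textbf{Assembling the denominator.} Plugging these into~\eqref{eq:F-explicit-simplified}, the denominator becomes
\formula{
1 - (p_{y,1} w + p_{y,2})(p_{y+1,3}w^{-1} + p_{y+1,4}) H_{y+1,b}(w) - (p_{y,3} w^{-1} + p_{y,4})(p_{y,1}w + p_{y,2}) H_y(w).
}
Both products of linear factors are of the form $a w + b + c w^{-1}$ with $a,b,c \ge 0$ not all zero (as in~\eqref{eq:abc}), and $H_{y+1,b}, H_y \in \mathscr{Q}$. The normalization condition needed to invoke Lemma~\ref{thm:w2-closure} — namely that the sum of the two terms evaluated at $w=1$ is strictly less than $1$ — follows from the probabilistic meaning: evaluating at $w=1$ the denominator equals $\P^{(0,y)}(\tau_0 < \tau_b)^{-1}$ times a positive quantity, equivalently the denominator at $1$ is the reciprocal of a probability generating function value, hence positive; more directly, the two subtracted terms at $w=1$ are $\P^{(0,y)}(\tau_y^+ < \tau_0 \wedge \tau_b)$-type quantities which are bounded by the return probability, strictly less than $1$ under our standing assumptions. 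So Lemma~\ref{thm:w2-closure} with $n=2$ applies and shows that
\formula{
\frac{1}{1- (p_{y,1} w + p_{y,2}) F_{y+1}^{y, b}(w) - (p_{y,3} w^{-1} + p_{y,4}) F_{y-1}^{y, 0}(w)} \in \mathscr{Q}.
}

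\textbf{Conclusion.} Multiplying this $\mathscr{Q}$ function by the remaining factors of~\eqref{eq:F-explicit-simplified}, namely $(p_{y,3}w^{-1}+p_{y,4}) F_{y-1}^{0,y}(w) = (p_{y,3}w^{-1}+p_{y,4}) G_{y-1}(w)^{-1} \prod_{k=1}^{y-1}(p_{k,3}w^{-1}+p_{k,4})$, I would absorb the $G_{y-1}(w)^{-1}$ into the $\mathscr{Q}$ function (a $\mathscr{Q}$ function divided by a $\mathscr{P}$ function need not itself be in $\mathscr{Q}$, so here I would instead argue directly at the level of the product representation, or — cleaner — note that the $G_{y-1}$ in the numerator of $H_y$ cancels the $G_{y-1}^{-1}$, so that the whole expression collapses to $H_{y,b}(w)/G_y(w) \cdot \prod_{k=1}^y (p_{k,3}w^{-1}+p_{k,4})$ for an appropriate $H_{y,b} \in \mathscr{Q}$), using $(p_{y,3}w^{-1}+p_{y,4})\prod_{k=1}^{y-1}(p_{k,3}w^{-1}+p_{k,4}) = \prod_{k=1}^{y}(p_{k,3}w^{-1}+p_{k,4})$. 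The $G_y \in \mathscr{P}$ here is exactly the one from Lemma~\ref{thm:Fy0y+1-solution}, so it automatically satisfies~\eqref{eq:Ay+By}, giving the claim.

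\textbf{Main obstacle.} The delicate point is the bookkeeping of which $\mathscr{P}$ function plays the role of the denominator $G_y$ and verifying that the leftover rational factor is genuinely in $\mathscr{Q}$: one must check that the cancellation of $G_{y-1}$ is exact and that no spurious poles or zeros are introduced when combining the $\mathscr{Q}$ function from Lemma~\ref{thm:w2-closure} with the linear factor $(p_{y,3}w^{-1}+p_{y,4})$ and with $G_{y-1}^{-1}$. I expect this to require either reproving a small closure statement ($\mathscr{Q}$ is stable under multiplication by factors $aw+b+cw^{-1}$ with the right interlacing, which is implicit in the partial-fraction computation of Lemma~\ref{thm:w2-closure}) or, more economically, exploiting the telescoping identity $H_y = G_{y-1}/G_y$ so that the problematic $G_{y-1}^{-1}$ simply never appears after simplification. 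The probabilistic positivity inputs (for the hypotheses of Lemma~\ref{thm:w2-closure}) are routine given the standing assumption $p_{y,1}+p_{y,2}>0$ and $p_{y,3}+p_{y,4}>0$ for all $y$.
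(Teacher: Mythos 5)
Your approach is the same as the paper's — substitute Lemmas~\ref{thm:Fy0y+1-solution}, \ref{thm:Fyy+10-solution} and \eqref{eq:Fyy-1b} into \eqref{eq:F-explicit-simplified} and invoke Lemma~\ref{thm:w2-closure} to show that the reciprocal of the denominator lies in $\mathscr{Q}$. However, you have an indexing slip and your ``main obstacle'' is a confusion rather than a genuine difficulty.

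The indexing slip: the denominator of \eqref{eq:F-explicit-simplified} contains $F_{y-1}^{y,0}$, not $F_y^{y+1,0}$. Applying Lemma~\ref{thm:Fyy+10-solution} with $y$ replaced by $y-1$ gives $F_{y-1}^{y,0}(w)=(p_{y-1,1}w+p_{y-1,2})H_{y-1}(w)$, so the linear prefactor in the third term of the denominator should carry $p_{y-1,1},p_{y-1,2}$, not $p_{y,1},p_{y,2}$. This does not break the argument (any nonnegative product $aw+b+cw^{-1}$ suffices for Lemma~\ref{thm:w2-closure}), but it does mean the $\mathscr{Q}$ function appearing is $G_{y-2}/G_{y-1}$, not $G_{y-1}/G_y$, which matters for your next point.

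The more substantive problem is the ``cleaner'' cancellation you propose. You want to absorb the leftover $G_{y-1}(w)^{-1}$ (coming from $F_{y-1}^{0,y}$) by cancelling against a $G_{y-1}$ that you claim sits in the numerator of $H_y$, so as to be left with $G_y$ in the denominator. This does not work: that factor of $H_y$ sits \emph{inside} the reciprocal $H_{y,b}(w)=\bigl(1-(\cdots)\tilde H_{y,b}(w)-(\cdots)H_{y-1}(w)\bigr)^{-1}$, and Lemma~\ref{thm:w2-closure} produces $H_{y,b}$ as a single $\mathscr{Q}$ function with its own zeros and poles; there is no exposed factor of $G_{y-1}$ available for cancellation. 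In fact no cancellation is required at all. The numerator of \eqref{eq:F-explicit-simplified} is already $(p_{y,3}w^{-1}+p_{y,4})F_{y-1}^{0,y}(w)=\dfrac{1}{G_{y-1}(w)}\prod_{k=1}^{y}(p_{k,3}w^{-1}+p_{k,4})$, so one obtains directly
\[
F_y^{0,b}(w)=\frac{H_{y,b}(w)}{G_{y-1}(w)}\prod_{k=1}^{y}(p_{k,3}w^{-1}+p_{k,4}),
\]
with $H_{y,b}\in\mathscr{Q}$ by Lemma~\ref{thm:w2-closure}. The $G$ named $G_y$ in the statement of Lemma~\ref{thm:Fy0b-solution} is precisely this $G_{y-1}$ of Lemma~\ref{thm:Fy0y+1-solution} — the paper is re-using the symbol, not asserting it coincides with the earlier $G_y$. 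Since $d_\pm(G_{y-1})\le\lfloor(y-1)/2\rfloor\le\lfloor y/2\rfloor$, the bound \eqref{eq:Ay+By} is satisfied (in fact with the sharper $\lfloor(y-1)/2\rfloor$, which is exactly what the proof of Theorem~\ref{thm:main} later needs). So the obstacle you describe dissolves once you stop trying to force the denominator to be the $G_y$ of Lemma~\ref{thm:Fy0y+1-solution}.

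One further point you gloss over: the normalization hypothesis of Lemma~\ref{thm:w2-closure} requires an explicit probabilistic inequality. In the paper it is verified by noting that the subtracted terms at $w=1$ equal $(p_{y,1}+p_{y,2})\P^{(0,y+1)}(\tau_y<\tau_b)+(p_{y,3}+p_{y,4})\P^{(0,y-1)}(\tau_y<\tau_0)$, which is strictly less than $(p_{y,1}+p_{y,2})+(p_{y,3}+p_{y,4})=1$. Your sketch of this step is correct in spirit but vaguer than it needs to be.
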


\begin{proof}
    By Lemma \ref{thm:Fy0y+1-solution}, there is a function $G_y \in \mathscr{P}$ with $A_G, B_G$ satisfying \eqref{eq:Ay+By}, such that the right-hand side of \eqref{eq:F-formula} can be rewritten as follows:
    \formula{
    \frac{H_{y,b}(w)}{G_{y}(w)} \prod_{k=1}^{y} (p_{k,3} w^{-1} + p_{k,4}),
    }
    where we denoted
    \formula{
    H_{y,b}(w) = \frac{1}{1- (p_{y,1} w + p_{y,2}) \, F_{y+1}^{y, b}(w) - (p_{y,3} w^{-1} + p_{y,4}) \, F_{y-1}^{y, 0}(w) }\;.
    }
    It remains to prove that the above function belongs to the class $\mathscr{Q}$. By Lemma \ref{thm:Fyy+10-solution} and \eqref{eq:Fyy-1b}, there is a function $\tilde{H}_{y,b} \in \mathscr{Q}$ for which
    \formula{
    (p_{y,1}w + p_{y,2})F_{y+1}^{y,b}(w) &= (p_{y,1}w + p_{y,2})(p_{y+1,3}w^{-1} + p_{y+1,4}) \tilde{H}_{y,b}(w) \\
    &= (aw + b + cw^{-1})  \tilde{H}_{y,b}(w)
    }
    for some $a,b,c \geq 0$, not all equal to zero. Similarly, there is $H_y \in \mathscr{Q}$ such that
    \formula{
    (p_{y,3}w^{-1} + p_{y,4}) F_{y-1}^{y,0}(w) &= (p_{y-1,1}w + p_{y-1,2})(p_{y,3}w^{-1} + p_{y,4}) H_y(w) \\
    &= (a'w + b' + c'w^{-1}) H_y(w)
    }
    for some $a',b',c' \geq 0$, not all equal to zero. Furthermore,
    \formula{
    (p_{y,1}w + p_{y,2})(p_{y+1,3}w^{-1} &+ p_{y+1,4}) \tilde{H}_{y,b}(w) + (p_{y-1,1}w + p_{y-1,2})(p_{y,3}w^{-1} + p_{y,4}) H_y(w) \\
    &= (p_{y,1} + p_{y,2}) \, F_{y+1}^{y, b}(1) + (p_{y,3} + p_{y,4}) \, F_{y-1}^{y, 0}(1) \\
    &= (p_{y,1} + p_{y,2}) \, \P^{(0,y+1)}(\tau_y < \tau_b) + (p_{y,3} + p_{y,4}) \, \P^{(0,y-1)}(\tau_y < \tau_b) \\
    &< (p_{y,1} + p_{y,2}) + (p_{y,3} + p_{y,4}) = 1.
    }
Therefore, we can use Lemma \ref{thm:w2-closure} to find that $H_{y,b} \in \mathscr{Q}$.
\end{proof}

Now, we are in a position to prove the main result of this paper.

\begin{proof}[Proof of Theorem \ref{thm:main}]
    By Lemma \ref{thm:Fy0b-solution},
    \formula{
    F_{y}^{0,b}(w) = \frac{H_{y,b}(w)}{G_{y}(w)} \prod_{k=1}^{y} (p_{k,3} w^{-1} + p_{k,4}),
    }
    where $G_y \in \mathscr{P}$ and $H_{y,b} \in \mathscr{Q}$.
    
    By Lemma \ref{thm:w1-pf}, $(G_y(w))^{-1} \prod_{k=1}^{y} (p_{k,3} w^{-1} + p_{k,4})$ is the generating function of a summable Pólya frequency sequence which is a convolution of at most $\lfloor \frac{1}{2}(y-1) \rfloor$ geometric sequences, at most $\lfloor \frac{1}{2}(y-1) \rfloor$ reflected geometric sequences and at most $y$ mirrored two-point sequences. Lemma \ref{thm:amcm-pf}, in turn, implies that $H_{y,b}$ is the generating function of an $\amcm$ sequence that converges to zero. Furthermore,
    \formula{
    F_y^{0,\infty}(w) = \lim_{b \to \infty} F_{y}^{0,b}(w) = \frac{1}{G_{y}(w)} \prod_{k=1}^{y} (p_{k,3} w^{-1} + p_{k,4}) \lim_{b \to \infty} H_{y,b}(w).
    }

    Recall that a pointwise limit of a sequence of generating functions of $\amcm$ sequences is again the generating function of an $\amcm$ sequence, and the proof is complete.  
\end{proof}

\subsection{Diagonal random walk with barriers}
\label{sec:barriers}

The assumption that our random walk has no natural barriers, that is $p_{y,1} + p_{y,2}>0$ for every $y \in \Z$, allowed us to simplify the proof of Theorem \ref{thm:main}. Now, let us examine the distribution of first passage locations if there is $z \in \Z$ such that $p_{z,1}=p_{z,2}=0$. Probabilistically, this corresponds to a random walk reflected in $y=z$.

Before that, we introduce some auxiliary notation connected to convolution classes of sequences. Recall that $a(k)$ is a reflected geometric sequence if $a(-k)$ is a geometric sequence. An analogous remark applies to two-point sequences. For $y \in \N$, we say that a sequence $a(k)$ belongs to class:
\begin{itemize}
    \item $\mathscr{G}^*_+(y)$ if $a(k)$ is a convolution of at most $y$ geometric sequences;
    \item $\mathscr{G}^*_-(y)$ if $a(k)$ is a convolution of at most $y$ reflected geometric sequences;
    \item $\mathscr{T}_+^*(y)$ if $a(k)$ is a convolution of at most $y$ two-point sequences;
    \item $\mathscr{T}_-^*(y)$ if $a(k)$ is a convolution of at most $y$ reflected two-point sequences.
\end{itemize}
For $y_1, y_2 \geq 0$, we will say that $a(k)$ belongs to $\mathscr{G}^*_+(y_1) * \mathscr{T}_+^*(y_2)$ if $a(k) = (b*c)(k)$, where $b(k) \in \mathscr{G}^*_+(y_1)$ and $c(k) \in \mathscr{T}_+^*(y_2)$. We will use an analogous notation for other classes of sequences.

In terms of the above notation, Theorem \ref{thm:main} translates into the following: assuming that $a=x=0$ and $y \geq 0$, if $a(k)$ is a probability mass function of a scaled first passage location for a random walk that starts from level $y$, then
\formula[eq:ak-convolutions]{
a(k) \in \mathscr{G}^*_+(\lfloor \tfrac{1}{2}(y-1) \rfloor) * \mathscr{G}^*_-(\lfloor \tfrac{1}{2}(y-1) \rfloor) * \mathscr{T}_-^*(y) * \amcm.
}

Now, we turn to the main objective of this section. Recall that by $b \geq 0$ we denoted an additional upper barrier of our random walk that starts from $(0,y)$, see \eqref{eq:generating-function}. Below we consider three mutually exclusive cases.

\textit{Case 1.} If $y < z$, then we can repeat the proof for $b=z+1$ with no major changes.

\textit{Case 2.} If $y=z$, then we can also put $b=z+1=y+1$. By \eqref{eq:F-explicit-simplified}, it follows that
\formula{
    F_y^{0, \infty}(w) = F_{y}^{0,y+1}(w) = \frac{(p_{y,3} w^{-1} + p_{y,4}) F_{y-1}^{0,y}(w)}{1 - (p_{y,3} w^{-1} + p_{y,4}) \, F_{y-1}^{y, 0}(w) }.
}
Note that in this case Lemma \ref{thm:Fy0y+1-solution} holds true, so the above formula describes the generating function of a sequence from class $\mathscr{G}^*_+(\lfloor y/2 \rfloor) * \mathscr{G}^*_-(\lfloor y/2 \rfloor)*\mathscr{T}_-^*(y)$.

If in the above convolution we have at most $\lfloor \frac{1}{2}(y-1) \rfloor$ geometric sequences and at most $\lfloor \frac{1}{2}(y-1) \rfloor$ reflected geometric sequences, it is straightforward that \eqref{eq:ak-convolutions} holds true since the sequence $\ind_{\{0\}}(k)$ is $\amcm$. Hence, we restrict our attention to the case when we have exactly $\lfloor \frac{1}{2}(y-1) \rfloor$ of geometric sequences and exactly $\lfloor \frac{1}{2}(y-1) \rfloor$ reflected geometric sequences.

First, notice that if a sequence $a(k)$ is a convolution of exactly $\lfloor y/2 \rfloor$ geometric sequences, then $a(k) = b(k) * c(k)$, where $b(k)$ is a geometric sequence and $c(k) \in \mathscr{G}^*_+(\lfloor \frac{1}{2}(y-1) \rfloor)$ because we have $\lfloor y/2 \rfloor -1 \leq \lfloor \frac{1}{2}(y-1) \rfloor$. The same remark applies to reflected geometric sequences. Therefore, if $\smash{F_y^{0, y+1}}$ is the generating function of a sequence $a(k)$ which is a convolution of exactly $\lfloor y/2 \rfloor$ geometric sequences, exactly $\lfloor y/2 \rfloor$ reflected geometric sequences and $\mathscr{T}_-^*(y)$ sequence, then $a(k)$ can be represented as a convolution $b(k)*c(k)$, where $b(k) \in \mathscr{G}^*_+(\lfloor \tfrac{1}{2}(y-1) \rfloor) * \mathscr{G}^*_-(\lfloor \tfrac{1}{2}(y-1) \rfloor) * \mathscr{T}_-^*(y)$ and $c(k)$ is a convolution of a geometric sequence $q^k \ind_{\N}(k)$ and a reflected geometric sequence $q^{k} \ind_{\N}(-k)$ with $p,q \in (0,1)$.

Observe that for $|w|=1$ we have
\formula{
\sum_{k=-\infty}^\infty b(k) w^k &= \left(\sum_{k=0}^\infty (qw)^k \right) \left( \sum_{k=0}^\infty (p/w)^k\right) \\
&= \frac{1}{1-qw} \cdot \frac{1}{1-p/w} = \frac{1}{pq} \, \frac{1}{(1/p - 1/w)(1/q-w)}.
}
Clearly, the generating function of $c(k)$ belongs to $\mathscr{Q}$. By Lemma \ref{thm:amcm-pf}, a sequence $c(k)$ is $\amcm$ and \eqref{eq:ak-convolutions} holds true.

\textit{Case 3.} If $y>z$, we use a similar method as in Lemma \eqref{lem:generting-functions-equation}. We split the trajectory of the process with respect to the first visit at level $z$. By the strong Markov property, we deduce that
\formula{
F_y^{0,b}(w) = F_y^{z,b}(w) F_z^{0,b}(w).
}

On one hand, by applying Theorem \ref{thm:main}, we find out that $\smash{F_y^{z,b}}$ is the generating function of a sequence in $\mathscr{G}^*_+(\lfloor \tfrac{1}{2}(y-z-1) \rfloor) * \mathscr{G}^*_-(\lfloor \tfrac{1}{2}(y-z-1) \rfloor) * \mathscr{T}_-^*(y-z) * \amcm$. On the other hand, by applying case 2, we deduce that $\smash{F_z^{0,b}}$ is the generating function of a sequence in $\mathscr{G}^*_+(\lfloor z/2 \rfloor) * \mathscr{G}^*_-(\lfloor z/2 \rfloor)*\mathscr{T}_-^*(z)$. Now \eqref{eq:ak-convolutions} follows from the subadditivity of the floor function.

Finally, we generalize the results to a random walk with multiple barriers. Assume the existence of $n+1$ barriers at $z_0, z_1,\ldots, z_n$ such that $z_0 \geq y > z_1 > \ldots > z_n > 0.$  Then, by an analogous argument, we have
\formula{
F_y^{0,\infty}(w)=F_y^{0,b}(w) = F_y^{z_1,b}(w) F_{z_1}^{z_2,b}(w) \ldots F_{z_{n-1}}^{z_n, b}(w) F_{z_n}^{0,b}(w).
}

By case 1 or 2, we put $b=z_0+1$ and $\smash{F_y^{z_1, b}}$ is the generating function of a sequence from the convolution class $\mathscr{G}^*_+(\lfloor \tfrac{1}{2}(y-z_1-1) \rfloor) * \mathscr{G}^*_-(\lfloor \tfrac{1}{2}(y-z_1-1) \rfloor) * \mathscr{T}_-^*(y-z_1) * \amcm$. Furthermore, by case 3, functions $\smash{F_{z_i}^{z_{i+1},b}}$ for $i=1,2,\ldots,n$, where $z_{n+1}=0$, are the generating functions of $\mathscr{G}^*_+(\lfloor \tfrac{1}{2}(z_i-z_{i+1}) \rfloor) * \mathscr{G}^*_-(\lfloor \tfrac{1}{2}(z_i-z_{i+1}) \rfloor) * \mathscr{T}_-^*(z_i-z_{i+1})$ sequences. Again, by subadditivity, we have the convolution of at most
\formula{
\lfloor \tfrac{1}{2}(y-z_1-1) \rfloor + \sum_{i=1}^{n} \lfloor \tfrac{1}{2}(z_i-z_{i+1}) \rfloor &\leq \lfloor \tfrac{1}{2}(y-z_1-1) \rfloor + \bigg\lfloor \sum_{i=1}^{n} \tfrac{1}{2}(z_i-z_{i+1}) \bigg\rfloor \\
&= \lfloor \tfrac{1}{2}(y-z_1-1) \rfloor + \lfloor \tfrac{1}{2}z_1 \rfloor \leq \lfloor \tfrac{1}{2}(y-1) \rfloor
}
geometric and reflected geometric sequences. Therefore, \eqref{eq:ak-convolutions} holds true.

\begin{remark}
   As discussed in cases 1 and 2, if $y \leq z$, then for $b = z+1$ we have $\smash{F_y^{0,b} = F_y^{0,\infty}}$ and hence, the generating function of an $\amcm$ sequence from Theorem~\ref{thm:main} is given by a $\mathscr{Q}$ function. In other words, the measures $\mu_+$ and $\mu_-$ in representation \eqref{eq:amcm-generating-function} are necessarily purely atomic. 
\end{remark}

\section{Discussion and examples}\label{sec:discussion}

\subsection{Standard $\Z^2$ lattice}\label{sec:news}

Recall that a standard random walk on the lattice $\Z^2$ is a random walk $(X_n, Y_n)$ that makes jumps towards east $(1,0)$, north $(0,1)$, west $(-1,0)$ and south $(0,-1)$, under the assumption that transition probabilities depend only on $Y_n$. Following the notation in the original problem, we denote the transition probabilities as $p_{y,1}$ (east), $p_{y,2}$ (north), $p_{y,3}$ (west), $p_{y,4}$ (south). Furthermore, we assume that $p_{y,2}>0$ and $p_{y,4} > 0$ for every $y \in \Z$. We are again interested in the distribution of the first passage location $X(\tau_a)$. Now, level $a$ can be reached from level $y$ in both odd and even numbers of steps, so no scaling is needed in this case.

The proof of Theorem \ref{thm:news} is generally similar to the one presented in the previous sections. Below we provide its sketch, pointing out the main differences.

\begin{proof}[Sketch of the proof of Theorem \ref{thm:news}]  We divide our argument into four steps.

\textit{Step 1.} The first step of the proof is to establish recurrence relations for the generating functions. The analogue of Lemma \ref{lem:generting-functions-equation} can be proved using the same arguments, and it states that for $a < y< b$ we have
\formula*[eq:news:fyab]{
f_y^{a,b}(z) = \frac{p_{y,4} f_{y-1}^{a,y}(z)}{1-p_{y,1} z- p_{y,2} f_{y+1}^{y,b}(z)- p_{y,3} z^{-1} - p_{y,4} f_{y-1}^{y,a}(z)},
}
where
\formula[eq:f-news]{
f_y^{a,b}(z) = \E^{(0,y)} \left[  z^{X(\tau_a)} \ind_{\tau_a < \tau_b} \right]
}
is the probability generating function of $X(\tau_a)$, provided that the random walk hits $a$ before hitting $b$.

\textit{Step 2.} Now we turn to the algebraic part of this step. By \eqref{eq:news:fyab}, we obtain recurrence equations corresponding to \eqref{eq:system-fy0y+1} and \eqref{eq:system-fyy+10}:
\formula[eq:news:fy0y+1]{
& f_y^{0,y+1}(z) = \frac{p_{y,4} f_{y-1}^{0,y}(z)}{1-p_{y,1}z-p_{y,3}z^{-1} -p_{y,4}f_{y-1}^{y,0}(z)} \quad \text{for } y \geq 1, & f_0^{0,1}(z)=1
}
and
\formula[eq:news:fyy+10]{
& f_y^{y+1,0}(z) = \frac{p_{y,2}}{1-p_{y,1}z-p_{y,3}z^{-1} -p_{y,4}f_{y-1}^{y,0}(z)} \quad \text{for } y \geq 1, & f_0^{1,0}(z)=0.
}
By dividing both sides of \eqref{eq:news:fy0y+1} and \eqref{eq:news:fyy+10}, we get
\formula[eq:news:fyy+10:direct]{
f_{y}^{y+1,0}(z) = \frac{p_{y,2}}{p_{y,4}} \, \frac{f_{y}^{0,y+1}(z)}{f_{y-1}^{0,y}(z)}
}
and after substituting this into \eqref{eq:news:fy0y+1} we have
\formula[eq:news:fy0y+1:2]{
\frac{1}{f_y^{0,y+1}(z)} = \frac{1-p_{y,1}z-p_{y,3}z^{-1}}{p_{y,4}} \, \frac{1}{f_{y-1}^{0,y}(z)} - \frac{p_{y-1,2}}{p_{y-1,4}} \, \frac{1}{f_{y-2}^{0,y-1}(z)}
}
with initial conditions
\formula{
& f_0^{0,1}(z) = 1, & f_1^{0,2}(z) = \frac{p_{1,4}}{1-p_{1,1}z-p_{1,3}z^{-1}}.
}

\textit{Step 3.} In this step we solve \eqref{eq:news:fy0y+1:2} and \eqref{eq:news:fyy+10:direct}. This step involves the classes $\mathscr{P}$ and $\mathscr{Q}$ of rational functions, described in Section \ref{sec:rational-functions}. In order to do this, we need an analogue to Lemma \ref{thm:interlacing-w1} which we formulate below. 

\begin{lemma}\label{thm:news:w1}
Assume that $F \ll G$. For $a,b,c \geq 0$, not all equal to zero, and $d > 0$, denote
    \formula{
    H(x) = (a-bx-cx^{-1}) \, G(x) - d \, F(x).
    }
    If $H(1)>0$, then $H \in \mathscr{P}$ and $G \ll H$. Moreover, we have
    \formula{
    & A_H = \max\{A_G, A_F + \ind_{c \neq 0} - \ind_{a=c=0}\}, & B_H = \max\{ B_G, B_F + \ind_{b \neq 0} - \ind_{a =b=0} \}. &
    }
\end{lemma}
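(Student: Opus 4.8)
The plan is to adapt the proof of Lemma~\ref{thm:interlacing-w1} almost verbatim, since the algebraic structure is the same: we are again subtracting a multiple of the interlacing function $F$ from a multiple of $G$, only now $G$ itself carries the factor $(a-bx-cx^{-1})$ instead of $F$. Write $F\ll G$ via positive constants $\lambda,\lambda'$ and interlacing sequences $(\alpha_k),(\beta_k),(\alpha'_k),(\beta'_k)$, with $A=A_F$, $B=B_F$, $A'=A_G$, $B'=B_G$. The first step is to evaluate $H$ at the zeros $\alpha_l$ and $\beta_l$ of $F$ (note: now we test at zeros of $F$, not of $G$, because the $d\,F$ term vanishes there). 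At $x=\alpha_l$ we get $H(\alpha_l)=(a-b\alpha_l-c\alpha_l^{-1})\,G(\alpha_l)$; using the interlacing condition \eqref{eq:interlacing-condition}, the sign of $G(\alpha_l)=\lambda'\prod(\tfrac1{\alpha'_k}-\tfrac1{\alpha_l})\prod(\beta'_k-\alpha_l)$ is $(-1)^{l-1}$ (the $\beta'$-factors are all positive since $\alpha_l<1<\beta'_k$, and exactly $l-1$ of the $\alpha'$-factors are negative), and the coefficient $(a-b\alpha_l-c\alpha_l^{-1})$ — here I must check its sign. For $x\in(0,1)$ we have $x^{-1}>1>x$, so $a-bx-cx^{-1}$ could be negative; but actually we only know $a,b,c\ge0$ and $d>0$, so the sign of $a-bx-cx^{-1}$ is not controlled in general. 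This is the main subtlety, and I expect the resolution is that in the intended application (equation~\eqref{eq:news:fy0y+1:2}) the quantity $a-bx-cx^{-1}$ evaluated on the relevant domain, or rather the combination appearing, behaves well; more likely the correct reading is that we should test $H$ not at the zeros of $F$ but argue via sign alternations of $H$ at the zeros of $G$ as in the original lemma. Let me instead follow the original lemma's structure exactly.

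So: in \textit{Step 1} evaluate $H$ at $\alpha'_l$ and $\beta'_l$ (zeros of $G$), where $H(\alpha'_l)=-d\,F(\alpha'_l)$ and $H(\beta'_l)=-d\,F(\beta'_l)$, and since $d>0$ and $F$ has sign $(-1)^l$ at $\alpha'_l$ and at $\beta'_l$ by the interlacing condition (same computation as in Lemma~\ref{thm:interlacing-w1}), we obtain $(-1)^l H(\alpha'_l)>0$ for $l=1,\dots,A'$ and $(-1)^l H(\beta'_l)>0$ for $l=1,\dots,B'$; combined with $H(1)>0$, the intermediate value theorem produces zeros $(\alpha''_k),(\beta''_k)$ of $H$ with the interlacing pattern $0<\alpha'_{A'}<\alpha''_{A'}<\dots<\alpha'_1<\alpha''_1<1<\beta''_1<\beta'_1<\dots<\beta''_{B'}<\beta'_{B'}$, so in particular $G\ll H$ once we know $H\in\mathscr P$. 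In \textit{Step 2}, compute the degrees: $d_-((a-bx-cx^{-1})G)=A'+\ind_{c\neq0}-\ind_{a=c=0}$ and $d_+((a-bx-cx^{-1})G)=B'+\ind_{b\neq0}-\ind_{a=b=0}$, while $d_-(dF)=A$, $d_+(dF)=B$; since $A\le A'$ and $B\le B'$, inequality \eqref{eq:dpm} gives $d_-(H)\le\max\{A',A'+\ind_{c\neq0}-\ind_{a=c=0}\}=:A''$ and similarly $d_+(H)\le B''$ with the stated formulas (noting $A_F+\ind_{c\neq0}-\ind_{a=c=0}\le A_G+\ind_{c\neq0}-\ind_{a=c=0}$, but we actually want $\max\{A_G,A_F+\ind_{c\neq0}-\ind_{a=c=0}\}$, so one checks both candidates against $d_\pm$ of each summand — this requires a small case analysis exactly as before).

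In \textit{Step 3}, we show $H$ actually attains these degrees as zero counts. Suppose $A''>A'$, which forces $\ind_{c\neq0}-\ind_{a=c=0}=1$, i.e.\ $c\neq0$; then examine $\lim_{x\to0^+}x^{A'+1}H(x)$: the term $(a-bx-cx^{-1})G(x)$ contributes, after multiplying by $x^{A'+1}$, a limit proportional to $-c\cdot(-1)^{A'}\cdot(\text{positive})$ while $dF$ contributes a lower-order term, so $\lim_{x\to0^+}x^{A'+1}H(x)$ has sign $(-1)^{A'+1}$, producing an extra zero of $H$ in $(0,\alpha'_{A'})$ — careful here that the sign of the leading Laurent coefficient of $G$ at $0$ together with the $-c$ gives exactly $(-1)^{A'+1}$ times a positive constant, matching the sign needed beyond $\alpha''_{A'}$. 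Applying the same to $\tilde H(x)=H(1/x)$ handles the $B''>B'$ case. Then \textit{Step 4} is identical to the original: $A''+B''\le d_0(H)\le d_-(H)+d_+(H)\le A''+B''$, forcing equality, so $d_-(H)=A''$, $d_+(H)=B''$, all zeros of $H$ are positive and interlace $(\alpha'_k),(\beta'_k)$ as found, hence $H\in\mathscr P$ and $G\ll H$. The only genuinely new bookkeeping compared to Lemma~\ref{thm:interlacing-w1} is tracking how the factor $(a-bx-cx^{-1})$ (rather than multiplying $F$) shifts the degrees of the $G$-term, and making sure the sign computations at $x\to0^+$ and $x\to\infty$ come out with the correct parity; I expect that to be the main, though still routine, obstacle.
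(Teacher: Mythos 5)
Your overall approach is correct and is exactly the proof the paper intends: the paper omits the proof of this lemma and simply defers to Lemma~\ref{thm:interlacing-w1}. You also correctly identify the one conceptual adjustment needed — because the minus sign now sits on $d\,F$ rather than on the trinomial factor, $H$ must be evaluated at the zeros $\alpha'_l,\beta'_l$ of $G$, where $H(\alpha'_l)=-d\,F(\alpha'_l)$ has a sign controlled by the interlacing hypothesis; your initial attempt to evaluate at the zeros of $F$ was rightly abandoned, since the sign of $a-bx-cx^{-1}$ there is not controlled. Steps~1,~3 and~4 then go through verbatim and yield $H\in\mathscr{P}$ and $G\ll H$.

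Your Step~2, however, is muddled, and the root of the confusion is not entirely your fault: the degree formula printed in the lemma appears to have $A_F$ and $A_G$ (respectively $B_F$ and $B_G$) transposed. Since the trinomial now multiplies $G$, the two summands in $H$ have lower degrees $d_-\bigl((a-bx-cx^{-1})G\bigr)=A_G+\ind_{c\neq0}-\ind_{a=c=0}$ and $d_-(dF)=A_F$, so what the argument of Lemma~\ref{thm:interlacing-w1} actually produces here is
\[
A_H=\max\bigl\{A_F,\ A_G+\ind_{c\neq0}-\ind_{a=c=0}\bigr\},\qquad
B_H=\max\bigl\{B_F,\ B_G+\ind_{b\neq0}-\ind_{a=b=0}\bigr\},
\]
not $\max\{A_G,A_F+\cdots\}$ as stated. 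The printed formula is in fact false: take $F\equiv1$ and $G(x)=2-x^{-1}$ (so $F\ll G$ with $A_F=0$, $A_G=1$, $B_F=B_G=0$), and $a=2$, $b=0$, $c=\tfrac12$, $d=1$. Then $H(x)=(2-\tfrac12x^{-1})(2-x^{-1})-1=\tfrac12x^{-2}-3x^{-1}+3$ has $H(1)=\tfrac12>0$ and two zeros $(3\pm\sqrt3)/6$ in $(0,1)$, so $A_H=2$; the printed formula gives $\max\{1,1\}=1$, while the corrected one gives $\max\{0,2\}=2$. Your intermediate bound ``$d_-(H)\le\max\{A',A'+\ind_{c\neq0}-\ind_{a=c=0}\}$'' is also not what \eqref{eq:dpm} delivers (it should be $\max\{A_F,\,A_G+\ind_{c\neq0}-\ind_{a=c=0}\}$), and the ``small case analysis'' you defer to cannot reconcile your computation with the printed formula, precisely because that formula is wrong. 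A further simplification worth noting: since $F(1),G(1)>0$ for $\mathscr{P}$ functions, the hypothesis $H(1)>0$ forces $a>0$, so $\ind_{a=c=0}$ and $\ind_{a=b=0}$ vanish identically in this lemma. None of this affects the conclusions $H\in\mathscr{P}$ and $G\ll H$.
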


The proof follows the same steps as the proof of Lemma \ref{thm:interlacing-w1}, and so we omit it.

This result allows us to solve \eqref{eq:news:fy0y+1:2} and find out that $g_y = \smash{1/f_y^{0,y+1}}$ are interlacing $\mathscr{P}$ functions. Let us justify this claim. Rewriting \eqref{eq:news:fy0y+1:2} in terms of $g_y$ yields
\formula{
g_y(z) = \left(\frac{1}{p_{y,4}}-\frac{p_{y,1}}{p_{y,4}} z -\frac{p_{y,3}}{p_{y,4}} z^{-1} \right) g_{y-1}(z) - \frac{p_{y-1,2}}{p_{y-1,4}} \, g_{y-2}(z)
}
and
\formula{
& g_0(z) = 1, & g_1(z) = \frac{1}{p_{1,4}}-\frac{p_{1,1}}{p_{1,4}} z -\frac{p_{1,3}}{p_{1,4}} z^{-1}.
}

Clearly, $g_0 \in \mathscr{P}$ as a positive constant function. Note that $d_+(g_1) = \ind_{p_{1,1}\neq0}$ and $d_-(g_1) = \ind_{p_{1,3} \neq 0}$. In each case, we have $d_0(g_1) = d_-(g_1) + d_+(g_1)$ because $p_{1,1}p_{1,3} < 1/4$. By definition, $g_1(1)>0$. This implies that $g_1 \in \mathscr{P}$ and $g_0 \ll g_1$. Our claim follows by induction from Lemma \ref{thm:news:w1}. Moreover, we have $A \leq \lfloor \frac{1}{2}(y+1) \rfloor$ and $B \leq \lfloor \frac{1}{2}(y+1) \rfloor$ in the product representation of $g_y$.

By \eqref{eq:news:fyy+10:direct}, it also follows that $\smash{f_y^{y+1,0}} \in \mathscr{Q}$ as a quotient of interlacing $\mathscr{P}$ functions. Similarly to the original case, one can prove by the symmetry of the random walk that $f_y^{y-1,b} \in \mathscr{Q}$ as well.

\textit{Step 4.} The final step is devoted to investigating the generating functions described by \eqref{eq:news:fyab}, that is
\formula[eq:factors]{
f_y^{0,b}(z) = p_{y,4} f_{y-1}^{0,y}(z) \cdot \frac{1}{1-p_{y,1} z- p_{y,2} f_{y+1}^{y,b}(z)- p_{y,3} z^{-1} - p_{y,4} f_{y-1}^{y,0}(z)}.
}
We denote the first factor as
\formula{
\frac{1}{G_y(z)} = p_{y,4} f_{y-1}^{0,y}(z),
}
where $G_y$ is a $\mathscr{P}$ class function with $A \leq \lfloor y/2 \rfloor$ and $B \leq \lfloor y/2 \rfloor$ in the product representation. The second factor on the right-hand side of \eqref{eq:factors} reads
\formula{
H_{y,b}(z) = \frac{1}{1-p_{y,1} z- p_{y,2} f_{y+1}^{y,b}(z)- p_{y,3} z^{-1} - p_{y,4} f_{y-1}^{y,0}(z)}.
}
Since positive constant functions belong to class $\mathscr{Q}$, we can use Lemma \ref{thm:w2-closure} to justify that $H_{y,b} \in \mathscr{Q}$.

As a result, the generating function of the first passage location can be represented as
\formula{
f_y^{0,b}(z) = \frac{H_{y,b}(z)}{G_y(z)},
}
where $G_y \in \mathscr{P}$ and $H_{y,b} \in \mathscr{Q}$ are described above. The assertion of Theorem \ref{thm:news} now follows from taking limits of both sides of the above equation as $b \to \infty$ and repeating the arguments from the proof of Theorem~\ref{thm:main}.
\end{proof}

\subsection{Honeycomb lattice}
\label{sec:honeycomb}

\begin{figure}
    \centering
    \subfloat[\centering The honeycombs.]{{\includegraphics[width=7cm]{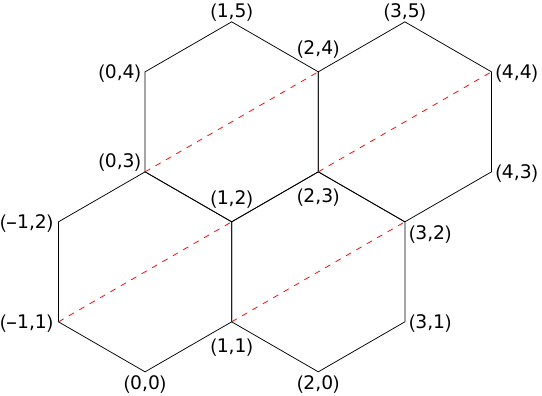}}}
    \qquad
    \subfloat[\centering The $\Z^2$ lattice.]{{\includegraphics[width=7cm]{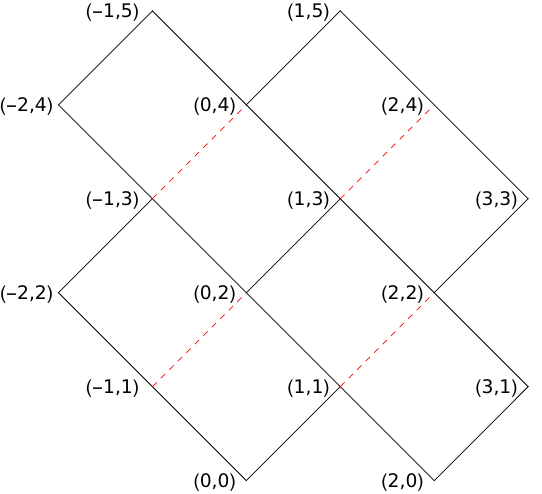}}}
    \caption{Embedding the honeycombs into the $\Z^2$ lattice.}
    \label{fig:honey}
\end{figure}


A random walk on the honeycomb lattice is a two-dimensional random walk $(X_n, Y_n)$ that moves only on the vertices of adjacent regular hexagons. We denote the set of such vertices as 
$\mathscr{H}$. The honeycomb lattice can be naturally represented by the following subset of $\Z^2$:
\formula{
\{(x,y) \in \Z^2: x \operatorname{mod} 1 = 0, \, y \operatorname{mod} 4 \in \{0,3\} \text{ or } x \operatorname{mod} 0 =1, \, y \operatorname{mod} 4 \in \{1,2\}\},
}
as in Figure \ref{fig:honey}A. The process can jump only onto the nearest vertices, meaning that for every point of the honeycomb there are three possible jumps. As in previous cases, we assume that the transition probabilities depend only on the level $Y_n$ of the random walk.

Theorem \ref{thm:main} implies that the rescaled first passage locations have bell-shaped distributions. To see this, we construct an embedding of the honeycomb $\mathscr{H}$ into the diagonal $\Z^2$ lattice. We define $\ph: \mathscr{H} \to \Z^2$ as
\formula{
\ph(x,y) = (x-\lfloor y/2 \rfloor, y).
}

As seen in Figure \ref{fig:honey}, the transformation $\ph$ embeds each vertex of the honeycomb into a vertex of the $\Z^2$ lattice shifted with respect to the first coordinate. In simple words, $\ph$ stretches each honeycomb into a rectangle on the $\Z^2$ lattice, as shown in Figure \ref{fig:honey}B. The red dashed lines symbolize the jumps that cannot be made.

If we assume that the lowest vertices of the honeycomb in Figure \ref{fig:honey}A are at the even level, we can define a two-dimensional random walk on the $\Z^2$ lattice such that
\formula{
& p_{y,1} = 0 \text{ for odd } y, & p_{y,3}=0 \text{ for even } y.
}
and the remaining transition probabilities match those for the honeycomb random walk. Consequently, the distributions of the first passage location of both random walks are the same, up to a shift, and Theorem \ref{thm:main} applies.

\subsection{Explicit examples}\label{sec:examples}

In some special cases, it is easy to find the generating function of the first passage location directly, by solving the recurrence equations. In this section, we denote $\smash{F_y^{0,\infty}}$ simply by $F_y$. Note that if the process starts with level $y$, it can jump up to $y+1$ or down to $y-1$. Translating this into the language of generating functions leads to the following relation:
\formula[eq:ex:Fy]{
F_y(w) = (p_{y,1}w + p_{y,2})F_{y+1}(w) + (p_{y,3}w^{-1}+p_{y,4}) F_{y-1}(w),
}
where $F_0(w)=1$ and $|F_y(w)| \leq 1$ for $|w|=1$.

Below we consider a few special cases of transition probabilities for which the solution is given by an explicit formula. The distribution of the first passage location $\frac{1}{2}(X(\tau_0)-y)$ can be retrieved from $F_y$ by calculating its coefficients in the Laurent series expansion around zero:
\formula{
\P^{(0,y)}(\tfrac{1}{2}(X(\tau_0)-y) = k) = \frac{1}{2\pi} \int_{-\pi}^{\pi} F_y(e^{is}) e^{-iks} \, ds.
}

\begin{example}[Symmetric two-dimensional diagonal random walks]
Let $p_{y,k}=1/4$ for every $y \in \Z$ and $k \in \{1,2,3,4\}$. Then, \eqref{eq:ex:Fy} reads
\formula{
F_y(w) = \frac{(w+1)F_{y+1}(w) + (w^{-1}+1)F_{y-1}(w)}{4},
}
which is a second-order linear recurrence equation. The bounded solution is given by
\formula{
F_y(w) = \left( \frac{2+i(w^{1/2}-w^{-1/2})}{w+1} \right)^y,
}
where $w^{\pm1/2}=e^{\pm iks/2}$ when $w=e^{iks}$, $-\pi \leq s \leq \pi$. The corresponding bell-shaped sequences are presented in Figure \ref{fig:symmetric-2}.


\begin{figure}
    \centering
    \includegraphics[width=\linewidth]{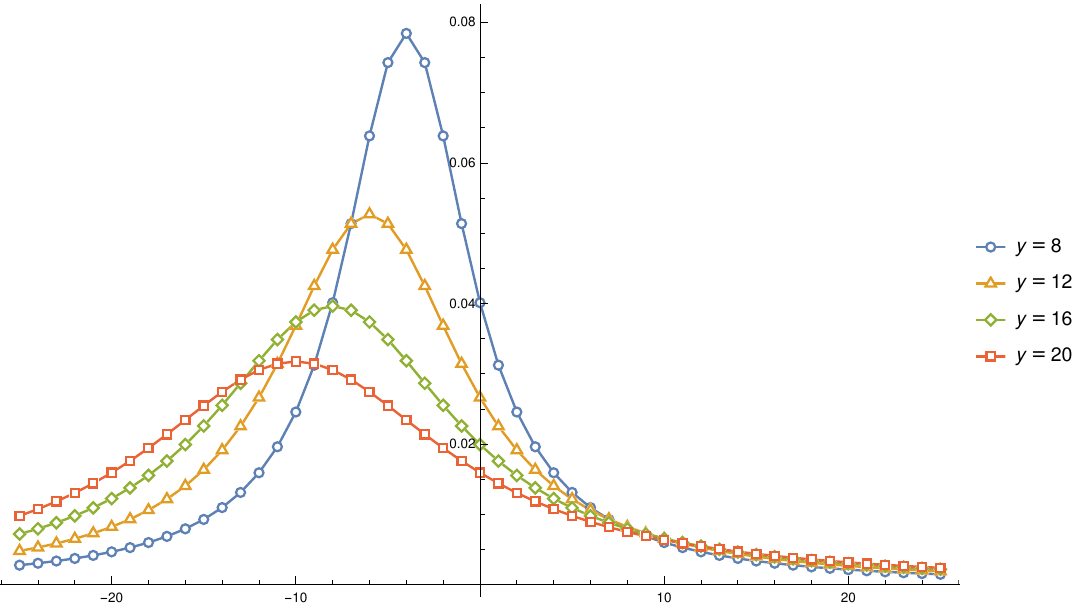}
    \caption{Bell-shaped distribution of $\frac{1}{2}(X(\tau_0)-y)$ for a symmetric two-dimensional diagonal random walk on the lattice $\Z^2$ starting from $(0,y)$.}
    \label{fig:symmetric-2}
\end{figure}

Theorem \ref{thm:main} allows us to claim that the solution given above can be represented in an exponential form, described in detail in Theorem \ref{thm:bs-characterization}(c). The same result can be obtained by studying the complex argument of $F_y$. The detailed calculations were provided in \cite{kw2}; see Example 1.11 and Appendix B.1 therein. In particular, the above means that discrete Poisson kernels are bell-shaped. 
\end{example}

\begin{example}[One-dimensional random walks]\label{ex:bodensson} Bondesson's result is a direct corollary of Theorem \ref{thm:main}. For a given one-dimensional random walk $\tilde{Y}_n$ with jumps only $1$ and $-1$ with probabilities $q_y$ and $1-q_y$ respectively, one can construct the two-dimensional random walk $(X_n, Y_n)=(n, \tilde{Y}_n)$ with transition probabilities
\formula{
& p_{y,1} = q_y = 1-p_{y,4}, & p_{y,2} = p_{y,3} = 0. &
}
It is easy to see that the distribution of $\tilde{\tau}_0 = \min\{n \geq 0: \tilde{Y}_n = 0\}$ with respect to measure $\P^{y}$ is the same as the distribution of $X(\tau_0)=\tau_0$ with respect to measure $\P^{(0,y)}$. 

If $q_y$ do not depend on $y \in \Z$, \eqref{eq:ex:Fy} simplifies to
\formula{
F_y(w) = qw F_{y+1}(w)+ (1-q)F_{y-1}(w).
}
Hence, the bounded solution is given by
\formula{
F_y(w) = \left( \frac{1-\sqrt{1-4q(1-q)w}}{2qw} \right)^y.
}
The bell-shaped probability mass functions of $\frac{1}{2}(\tau_0-y)$ for $q=1/4$ and different $y$ can be seen in Figure \ref{fig:onedimensional-quarter-2}.
\end{example}




\begin{figure}
    \centering
    \includegraphics[width=\linewidth]{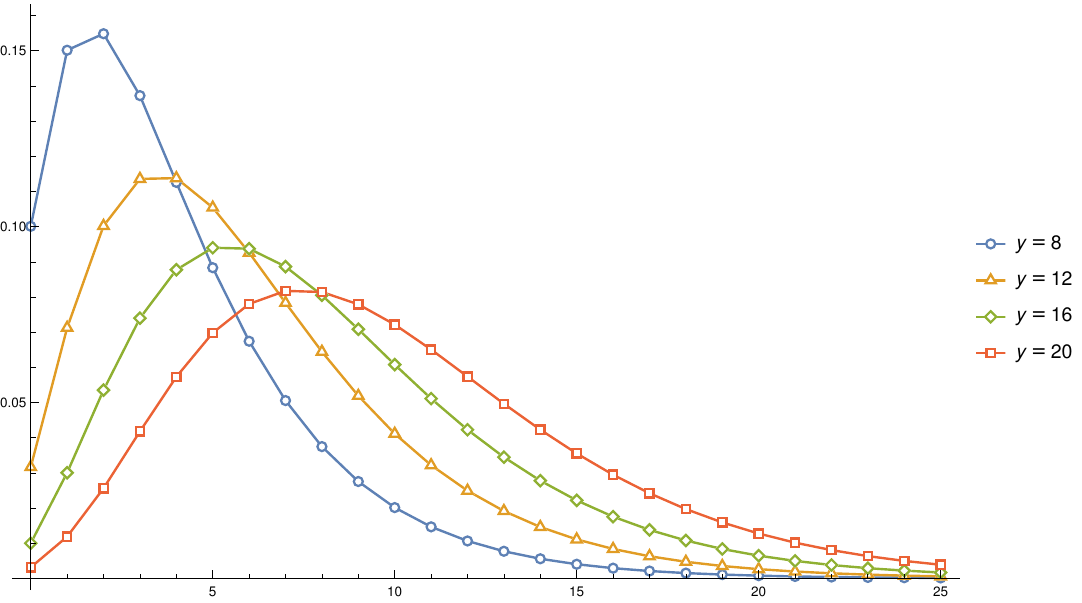}
    \caption{Bell-shaped distribution of $\frac{1}{2}(\tau_0-y)$ for one-dimensional random walk starting from $y$ with $q=1/4$.}
    \label{fig:onedimensional-quarter-2}
\end{figure}

Note that Bondesson's result also implies that the first passage times of two-dimensional random walks follow a bell-shaped distribution. Precisely, given a random walk $(X_n, Y_n)$, we define a one-dimensional random walk $\tilde{Y}_n = Y_n$ with transition probabilities $q_y = p_{y,1} + p_{y,2}$. It is clear that the distributions of $\tau_0$ and $\tilde{\tau}_0$ (with respect to appropriate measures) are identical.

\begin{example}[Symmetric two-dimensional standard random walk]
    Consider a standard random walk on the lattice $\Z^2$ discussed in Section \ref{sec:news}. Let $p_{y,k}=1/4$ for $y \in \Z$ and $k \in \{1,2,3,4\}$. In this case, an analogue of \eqref{eq:ex:Fy} reads
    \formula{
    f_y(z) = \frac{(z+ z^{-1})f_y(z) + f_{y+1}(z) + f_{y-1}(z)}{4},
    }
    where $f_y$ is defined in \eqref{eq:f-news} and $|z|=1$. The bounded solution of this recurrence is given by
    \formula{
    f_y(z) = \left( 2 - \frac{z+z^{-1}}{2} - \sqrt{1-  \frac{z+z^{-1}}{2} } \sqrt{3- \frac{z+z^{-1}}{2} }\right)^y.
    }
    This is the generating function of a bell-shaped sequence presented in Figure \ref{fig:news}. For further discussion, see Example 1.10 and Appendix B.2 in \cite{kw2}.
\end{example}

\begin{figure}
    \centering
    \includegraphics[width=\linewidth]{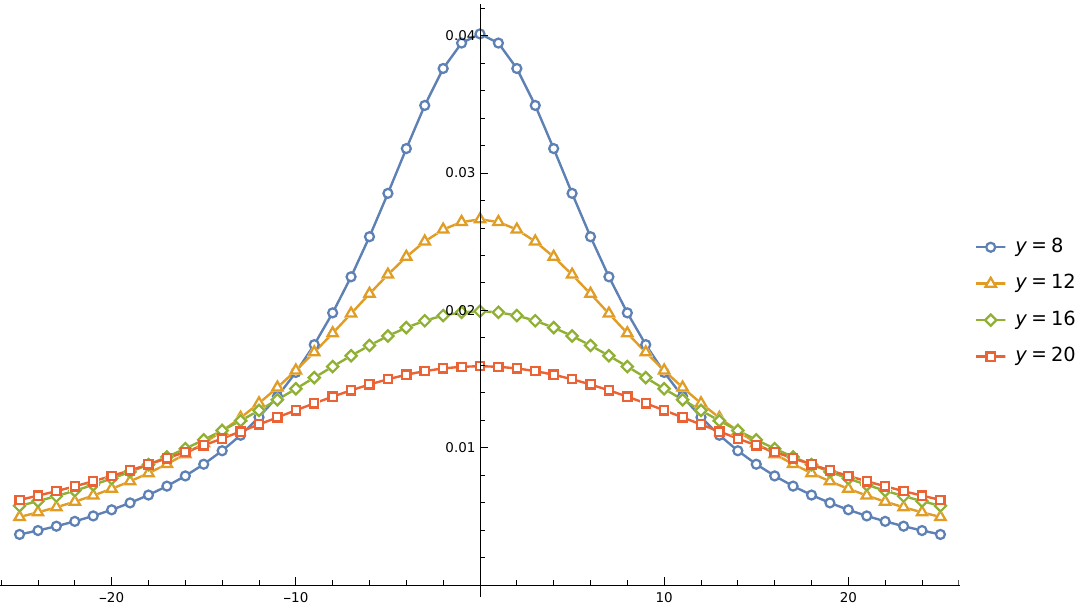}
    \caption{Bell-shaped distribution of $\frac{1}{2}(X(\tau_0)-y)$ for a symmetric two-dimensional standard random walk on the lattice $\Z^2$ starting from $(0,y)$.}
    \label{fig:news}
\end{figure}

\section*{}

\subsection*{Acknowledgments} I would like to express my sincere gratitude to my supervisor, Mateusz Kwaśnicki, for his support throughout the process of writing this paper. His expertise and insightful feedback have greatly contributed to shaping this work.

\bibliographystyle{abbrv}
\bibliography{bibi}

\begin{thebibliography}{10}

\bibitem{as}
E.~Arjas and T.~P. Speed.
\newblock Topics in markov additive processes.
\newblock {\em Mathematica Scandinavica}, 33(1):171--192, 1973.

\bibitem{asmussen1}
S.~Asmussen.
\newblock {\em Applied Probability and Queues}.
\newblock Applications of mathematics : stochastic modelling and applied probability. Springer, 2003.

\bibitem{asmussen2}
S.~Asmussen and H.~Albrecher.
\newblock {\em Ruin Probabilities (2nd Edition)}.
\newblock Advanced Series On Statistical Science And Applied Probability. World Scientific Publishing Company, 2010.

\bibitem{ballu}
T.~Ballu.
\newblock Asymptotics of green functions for markov-additive processes: an approach via dyadic splitting of integrals.
\newblock {\em arXiv preprint arXiv:2407.10685}, 2024.

\bibitem{bm}
B.~Bercu and F.~Mont{\'e}gut.
\newblock Asymptotic analysis of random walks on ice and graphite.
\newblock {\em Journal of Mathematical Physics}, 62(10), 2021.

\bibitem{bondesson}
L.~Bondesson.
\newblock A characterization of first passage time distributions for random walks.
\newblock {\em Stochastic processes and their applications}, 39(1):81--88, 1991.

\bibitem{dcm}
A.~Di~Crescenzo, C.~Macci, B.~Martinucci, and S.~Spina.
\newblock Analysis of random walks on a hexagonal lattice.
\newblock {\em IMA Journal of Applied Mathematics}, 84(6):1061--1081, 11 2019.

\bibitem{dj}
K.~Driver and K.~Jordaan.
\newblock Interlacing of zeros of shifted sequences of one-parameter orthogonal polynomials.
\newblock {\em Numer. Math.}, 107(4):615--624, 2007.

\bibitem{elliott}
R.~Elliott, L.~Aggoun, and J.~Moore.
\newblock {\em Hidden Markov Models: Estimation and Control}.
\newblock Stochastic Modelling and Applied Probability. Springer New York, 2008.

\bibitem{fisk}
S.~Fisk.
\newblock Polynomials, roots, and interlacing, 2008.
\newblock \href{https://arxiv.org/abs/math/0612833}{arXiv:0612833}.

\bibitem{gawronski}
W.~Gawronski.
\newblock On the bell-shape of stable densities.
\newblock {\em The Annals of Probability}, pages 230--242, 1984.

\bibitem{gf}
D.~G{\'o}mez-Ullate, F.~Marcell{\'a}n, and R.~Milson.
\newblock Asymptotic and interlacing properties of zeros of exceptional {Jacobi} and {Laguerre} polynomials.
\newblock {\em J. Math. Anal. Appl.}, 399(2):480--495, 2013.

\bibitem{gm}
T.~N.~T. Goodman, C.~A. Micchelli, G.~Rodriguez, and S.~Seatzu.
\newblock Spectral fractorization of {Laurent} polynomials.
\newblock {\em Adv. Comput. Math.}, 7(4):429--454, 1997.

\bibitem{hausdorff1}
F.~Hausdorff.
\newblock Summability methods and moment sequences. {I}.
\newblock {\em Math. Z.}, 9:74--109, 1921.

\bibitem{hausdorff2}
F.~Hausdorff.
\newblock Summability methods and moment sequences. {II}.
\newblock {\em Math. Z.}, 9:280--299, 1921.

\bibitem{hendriksen}
E.~Hendriksen and H.~van Rossum.
\newblock Orthogonal {Laurent} polynomials.
\newblock {\em Indag. Math.}, 48:17--36, 1986.

\bibitem{hirschman}
I.~Hirschman.
\newblock Proof of a conjecture of ij schoenberg.
\newblock {\em Proceedings of the American Mathematical Society}, 1(1):63--65, 1950.

\bibitem{johnson}
C.~R. Johnson.
\newblock Interlacing polynomials.
\newblock {\em Proc. Am. Math. Soc.}, 100:401--404, 1987.

\bibitem{karlin}
S.~Karlin.
\newblock {\em Total positivity}.
\newblock Stanford University Press, 1968.

\bibitem{kwasnicki}
M.~Kwa{\'s}nicki.
\newblock A new class of bell-shaped functions.
\newblock {\em Transactions of the American Mathematical Society}, 373(4):2255--2280, 2020.

\bibitem{kwasnicki3}
M.~Kwa{\'s}nicki.
\newblock Poisson kernels on the halfplane are bell-shaped.
\newblock {\em In preparation}, 2024.

\bibitem{ks}
M.~Kwa{\'s}nicki and T.~Simon.
\newblock Characterisation of the class of bell-shaped functions.
\newblock {\em Mathematische Zeitschrift}, pages 1--25, 2022.

\bibitem{kw}
M.~Kwaśnicki and J.~Wszoła.
\newblock {Bell-shaped sequences}.
\newblock {\em Studia Math.}, 271(2):151--185, 2023.

\bibitem{kw2}
M.~Kwaśnicki and J.~Wszoła.
\newblock Two-sided bell-shaped sequences, 2024.
\newblock \href{https://arxiv.org/abs/2404.11274}{arXiv:2404.11274} (preprint).

\bibitem{macci}
C.~Macci.
\newblock Discrete time uniformly recurrent markov additive processes: two simplified models and large deviations.
\newblock {\em Rivista di Matematica della Università di Parma}, 2(7):51--62, 2003.

\bibitem{mz}
I.~MacDonald and W.~Zucchini.
\newblock {\em Hidden Markov and Other Models for Discrete- valued Time Series}.
\newblock Chapman \& Hall/CRC Monographs on Statistics \& Applied Probability. Taylor \& Francis, 1997.

\bibitem{prp}
Z.~Palmowski, L.~Ramsden, and A.~D. Papaioannou.
\newblock Exit times for a discrete markov additive process.
\newblock {\em Journal of Theoretical Probability}, 37(2):1052--1078, 2024.

\bibitem{ss}
P.~Schmidt and F.~Spitzer.
\newblock The {Toeplitz} matrices of an arbitrary {Laurent} polynomial.
\newblock {\em Math. Scand.}, 8:15--38, 1960.

\bibitem{simon}
T.~Simon.
\newblock Positive stable densities and the bell-shape.
\newblock {\em Proceedings of the American Mathematical Society}, 143(2):885--895, 2015.

\end{thebibliography}

\end{document}